\documentclass[12pt]{iopart}
\usepackage{amssymb}
\usepackage{amsthm}
\usepackage{graphicx}
\usepackage{graphics,epsfig,color}

\renewcommand{\S}{\mathbb{S}}
\newcommand{\nn}{{\mathbf{n},\mathbf{p}}}
\newcommand{\p}{\mathbf{p}}
\newcommand{\n}{\mathbf{n}}
\newcommand{\0}{\mathbf{0}}
\newcommand{\G}[1]{G^{(#1)}_\nn}
\newcommand{\E}[2]{E^{(#1)}_{#2,\nn}}
\newcommand{\D}[1]{D_{#1}}
\newcommand{\B}{B_1(\0)}

\newcommand{\N}{{\mathbf N}}

\newcommand{\R}{{\mathbf R}}

\newcommand{\C}{{\mathbf C}}

\newtheorem{lemma}{Lemma}[section]
\newtheorem{thm}[lemma]{Theorem}

%
%
%
\begin{document}
\bibliographystyle{plain}

\title[Cyclidic coordinate system]{Separation of variables in an asymmetric cyclidic coordinate system}

\author{H S Cohl${}^{1}$ and H Volkmer${}^2$}

\address{$^1$Applied and Computational Mathematics Division, 
National Institute of Standards and Technology, Gaithersburg, MD
20899-8910, USA}
\address{$^2$Department of Mathematical Sciences, University of Wisconsin--Milwaukee,
P.~O. Box 413, Milwaukee, WI 53201, USA}
\begin{abstract}
A global analysis is presented of solutions for Laplace's equation
on three-dimensional Euclidean space in one of the 
most general orthogonal asymmetric confocal cyclidic 
coordinate systems which admit solutions
through separation of variables.  We refer to this coordinate system
as five-cyclide coordinates since the coordinate surfaces are given
by two cyclides of genus zero which represent the inversion at the unit
sphere of each other, a cyclide of genus one, and
two disconnected cyclides of genus zero.  This coordinate system 
is obtained by stereographic projection of sphero-conal
coordinates on four-dimensional Euclidean space.  The harmonics in this
coordinate system are given by products of solutions of
second-order Fuchsian ordinary differential equations with 
five elementary singularities.  The Dirichlet problem for the 
global harmonics in this coordinate system is solved using 
multiparameter spectral theory in the regions bounded by 
the asymmetric confocal cyclidic coordinate surfaces.
\end{abstract}

\pacs{41.20.Cv, 02.30.Em, 02.30.Hq, 02.30.Jr}
\ams{35J05, 42C15, 34L05, 35J15, 34B30}
\maketitle

\section{Introduction}
In 1894 Maxime B\^ocher's book ``Ueber die Reihenentwicklungen der
Potentialtheorie'' was published
\cite{Bocher}.
It took its origin from lectures given by Felix Klein in G\"ottingen
(see for instance \cite{Klein81,Klein91}).
In B\^{o}cher's book, the author gives a list
of 17 inequivalent coordinate systems in
three dimensions in which the Laplace equation admits separated solutions of
the form
\begin{equation}\label{1:sep}
 U(x,y,z)=R(x,y,z) w_1(s_1)w_2(s_2)w_3(s_3),
\end{equation}
where the modulation factor $R(x,y,z)$
\cite[p.~519]{MorseFesh} is a known and fixed function,
and $s_1,s_2,s_3$ are curvilinear
coordinates of $x,y,z$.
The functions $w_1,w_2,w_3$ are solutions of second order ordinary
differential equations.
The symmetry group of Laplace's equation is the conformal group and
equivalence between various separable coordinate systems is established
by the existence of a conformal transformation which maps one separable
coordinate system to another.

In general, the coordinate surfaces (called confocal cyclides) are
given by the zero sets of polynomials in $x,y,z$ of degree at most $4$
which can be broken up into several different subclasses.  For instance,
eleven of these coordinate systems have coordinate surfaces which are
given by confocal quadrics \cite[Systems 1--11 on p.~164]{Miller},
nine are rotationally-invariant \cite[Systems 2,5--8 on p.~164 and 14--17
on p.~210]{Miller}, four are cylindrical \cite[Systems 1--4]{Miller},
and five which are the most general, are of the asymmetric type namely,
confocal ellipsoidal, paraboloidal, sphero-conal, and two cyclidic
coordinate systems \cite[Systems 9,10,11 on p.~164
and Systems 12,13 on p.~210]{Miller}).  B\^ocher \cite{Bocher} showed
how to solve the Dirichlet problem for harmonic functions on
regions bounded by such confocal cyclides.  However, it is stated repeatedly
in B\^ocher's book that the presentation lacked convergence proofs, for
instance, this is mentioned in the preface written by Felix Klein.

It is the purpose of this paper to supply the missing proofs for one of the
asymmetric cyclidic coordinate systems which is listed as number 12 in Miller's
list \cite[page 210]{Miller}
(see also Boyer, Kalnins \& Miller (1976) \cite[Table 2]{BoyKalMil}
and Boyer, Kalnins \& Miller (1978) \cite{BoyKalMil78} for a more general setting).
For lack of a better name we call it {\it 5-cyclide coordinates}. This
asymmetric orthogonal curvilinear coordinate system has coordinates
$s_i\in\R$ $(i=1,2,3)$ with $s_i$ in $(a_0,a_1)$, $(a_1,a_2)$ or $(a_2,a_3)$,
respectively where $a_0<a_1<a_2<a_3$ are given numbers.
This coordinate system is described by coordinate
surfaces $s_i=const$ which are five compact cyclides. 
The surfaces
$s_1=const$ for $s_1\in(a_0,a_1)$ are two cyclides of 
genus zero representing inversions at the unit sphere 
of each other.
The surface $s_2=const$ for $s_2\in(a_1,a_2)$ represents
a ring cyclide of genus one and the surfaces $s_3=const$ for
$s_3\in(a_2,a_3)$ represent two disconnected cyclides of genus zero with reflection symmetry about the $x,y$-plane.
The asymptotic behavior of this coordinate
system as the size of these compact cyclides increases without limit
is {\it 6-sphere coordinates} (see Moon \& Spencer (1961)
\cite[p.~122]{MoonSpencer}), the inversion of Cartesian coordinates.

In our notation the coordinate surfaces of this system are given by
the variety
\begin{equation}\label{1:surfaces}
\frac{(x^2+y^2+z^2-1)^2}{s-a_0}+ \frac{4x^2}{s-a_1}+\frac{4y^2}{s-a_2}
+ \frac{4z^2}{s-a_3}=0,
\end{equation}
where $s=s_i$ is either in $(a_0,a_1)$, $(a_1,a_2)$ or $(a_2,a_3)$
respectively.
\begin{figure}[h]
\begin{center}
\includegraphics[height=74.5mm]{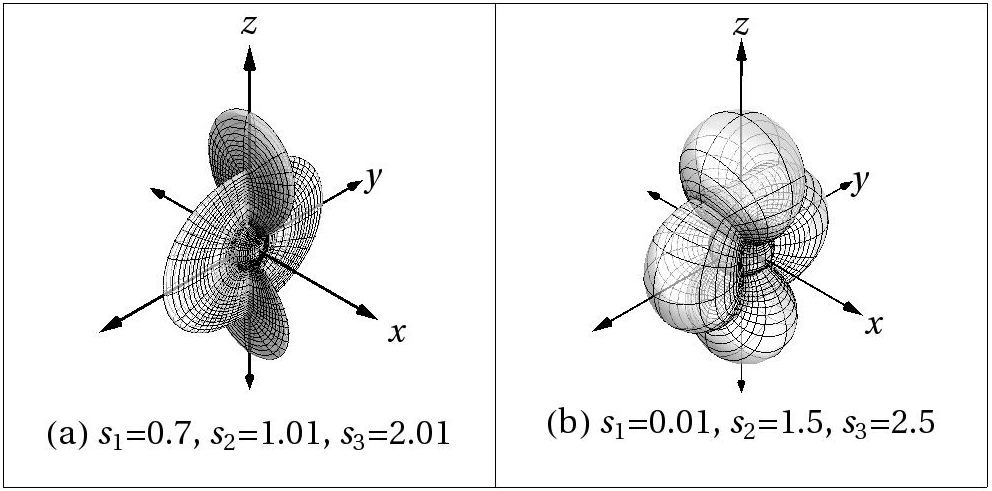}
\end{center}
\caption{Surfaces $s_{1,2,3}=const$ for $a_i=i,$ where
only the component of the cyclide $s_1=const$ inside the ball $x^2+y^2+z^2<1$ is shown.
\label{6:fig1}
}
\end{figure}
See Figure \ref{6:fig1}(a,b) for a graphical illustration of these
triply-orthogonal coordinate surfaces, where we have selected
one of the confocal cyclides for $s_1=const$.
This is a very general coordinate system containing the
parameters $a_0,a_1,a_2,a_3$ which generates many other coordinate
systems by limiting processes.
Since the book by B\^ocher is quite old and uses very geometrical methods,
we will present our results independently of B\^ocher's book.
We supply convergence proofs based on general multiparameter spectral
theory \cite{AtkMing11,Volkmerbook} which was created with such
applications in mind.  As far as we know this general theory has never
before been applied to the Dirichlet problems considered by B\^ocher.

We start with the observation that 5-cyclide coordinates are the stereographic
image of sphero-conal coordinates in four dimensions (or, expressed in another
way, of ellipsoidal coordinates on the hypersphere $\S^3$). We take the
sphero-conal coordinate system as known but we present the needed facts in
Section \ref{sphero}.  The well-known stereographic projection is dealt with
in Section \ref{stereo} which also explains the appearance of the factor $R$
in (\ref{1:sep}).  The 5-cyclide coordinate system is introduced in
Section \ref{coord}.  The solution of the Dirichlet problem on regions
bounded by surfaces (\ref{1:surfaces}) with $s\in(a_1,a_2)$ is presented
in Section \ref{Dirichlet1}. The preceding Section \ref{SL1} provides the
needed convergence proofs based on multiparameter spectral theory.
The remaining sections treat the Dirichlet problem on regions bounded by
the surfaces (\ref{1:surfaces}) when $s\in(a_1,a_2)$ (ring cyclides) and
$s\in(a_2,a_3)$.

\section{Sphero-conal coordinates on $\R^{k+1}$}\label{sphero}

Let $k\in\N$. In order to introduce sphero-conal coordinates on $\R^{k+1}$,
fix real numbers
\begin{equation}\label{2:a}
a_0<a_1<a_2<\dots <a_k.
\end{equation}
Let
$(x_0,x_1,\dots,x_k)$ be in the positive cone of
$\R^{k+1}$
\begin{equation}
x_0>0,\ldots,x_k>0.
\label{poscone}
\end{equation}
Its sphero-conal coordinates $r, s_1,\dots,s_k$ are
determined in the intervals
\begin{equation}\label{2:cube}
 r>0,\; a_{i-1}<s_i<a_i,\quad i=1,\dots,k
\end{equation}
by the equations
\begin{equation}\label{2:sphero1}
 r^2=\sum_{j=0}^k x_j^2
\end{equation}
and
\begin{equation}\label{2:sphero2}
 \sum_{j=0}^k \frac{x_j^2}{s_i-a_j}=0\quad\mbox{for\ } 
i=1,\dots,k.
\end{equation}
The latter equation determines $s_1,s_2,\dots,s_k$ as the zeros of a
polynomial of degree $k$ with coefficients which are polynomials
in $x_0^2,\dots,x_k^2$.

In this way we obtain a bijective (real-)analytic map from the positive
cone in $\R^{k+1}$ to the set of points $(r,s_1,\dots,s_k)$
satisfying (\ref{2:cube}).  The inverse map is found by solving a linear
system. It is also analytic, and it is given by
\begin{equation}\label{2:inverse}
  x_j^2=r^2 \frac{\prod_{i=1}^k(s_i-a_j)}{\prod_{j\ne i=0}^k (a_i-a_j)}.
\end{equation}
Sphero-conal coordinates are orthogonal, and
its scale factors (metric coefficients) are given by $H_r=1$, and
\begin{equation}\label{2:scaling}
 H_{s_i}^2=\frac14\sum_{j=0}^k \frac{x_j^2}{(s_i-a_j)^2}=-\frac14 r^2
 \frac{\prod_{i \ne j=1}^k (s_i-s_j)}
 {\prod_{j=0}^k (s_i-a_j)},\quad i=1,2,\dots,k .
\end{equation}

Consider the Laplace equation
\begin{equation}\label{2:pde}
\Delta U=\sum_{i=0}^k \frac{\partial^2 U}{\partial x_i^2} =0
\end{equation}
for a function $U(x_0,x_1,\dots,x_k)$.
Using (\ref{2:scaling}) we transform this equation
to sphero-conal coordinates, and then we apply the method of separation
of variables \cite{SchmidtWolf}
\begin{equation}\label{2:sepsol}
 U(x_0,x_1,\dots,x_k)=w_0(r)w_1(s_1)w_2(s_2)\dots w_k(s_k).
\end{equation}
For the variable $r$ we obtain the Euler equation
\begin{equation}\label{2:Euler}
 w_0''+\frac{k}{r} w_0'+ \frac{4\lambda_0}{r^2}w_0=0
\end{equation}
while for each of the variables $s_1,s_2,\dots,s_k$  we obtain the
Fuchsian equation
\begin{equation}\label{2:Fuchs}
\prod_{j=0}^k (s-a_j)\left[ w''+\frac12\sum_{j=0}^k \frac{1}
{s-a_j} w'\right] +\left[\sum_{i=0}^{k-1}\lambda_i s^{k-1-i}\right]w =0 .
\end{equation}
More precisely, if $\lambda_0,\ldots,\lambda_{k-1}$
are any given numbers (separation constants),
and if $w_0(r)$, $r>0$, solves (\ref{2:Euler})
and $w_i(s_i)$, $a_{i-1}<s_i<a_i$, solve (\ref{2:Fuchs})
for each $i=1,\dots,k$, then $U$ defined by (\ref{2:sepsol})
solves (\ref{2:pde}) in the positive cone of $\R^{k+1}$
(\ref{poscone}).

Equation (\ref{2:Fuchs}) has only regular points except for $k+2$ regular
singular points at $s=a_0,a_1,\dots,a_k$ and $s=\infty$.  The exponents at
each finite singularity $s=a_j$ are $0$ and $\frac12$. Therefore, for each
choice of parameters $\lambda_0,\dots,\lambda_{k-1}$,
there is a nontrivial analytic solution at $s=a_j$ and another one of the form
$w(s)=(s-a_j)^{1/2} v(s)$, where $v$ is analytic at $a_j$.
If $\nu,\mu$ denote the exponents at $s=\infty$ then
\begin{equation}\label{2:infinity}
\mu\nu=\lambda_0,\quad \mu+\nu=\frac{k-1}{2}.
\end{equation}
The polynomial $\sum_{i=0}^{k-1}\lambda_i s^{k-1-i}$ appearing
in (\ref{2:Fuchs}) is known as van Vleck polynomial.  If $k=1$
then (\ref{2:Fuchs}) is the hypergeometric differential equation (up to
a linear substitution). If $k=2$ then (\ref{2:Fuchs}) is the Heun equation.
We will use this equation for $k=3$.  According to Miller (1977)
\cite[p.~209]{Miller} (see also \cite[p.~71]{BoyKalMil})
in reference to the $k=3$ case, ``Very little is
known about the solutions.''

\section{Stereographic projection}\label{stereo}
We consider the stereographic projection
$P:\S^3\setminus\{(1,0,0,0)\}\to\R^3$ given by
\[
P(x_0,x_1,x_2,x_3)=\frac{1}{1-x_0}(x_1,x_2,x_3).
\]
The inverse map is
\[
P^{-1}(x,y,z)=\frac{1}{x^2+y^2+z^2+1}(x^2+y^2+z^2-1,2x,2y,2z).
\]
We extend $P^{-1}$ to a bijective map
\[Q:(0,\infty)\times \R^3\to \R^4\setminus\{(x_0,0,0,0): x_0\ge 0\}\]
by defining
\[
Q(r,x,y,z):=r P^{-1}(x,y,z).
\]
If we set $(x_0,x_1,x_2,x_3)=Q(r,x,y,z)$ we may consider $r,x,y,z$ as
curvilinear coordinates on $\R^4$ with Cartesian coordinates $x_0,x_1,x_2,x_3$.
We note that $x_0^2+x_1^2+x_2^2+x_3^2=r^2$ so $r$ is just the distance
between $(x_0,x_1,x_2,x_3)$ and the origin.  Moreover, $(x,y,z)$ is the
stereographic projection of the point $(x_0/r,x_1/r,x_2/r,x_3/r)\in\S^3$.
It is easy to check that the coordinate system is orthogonal and scale factors
are
\[
h_r=1,\quad h_x=h_y=h_z=2rh,\quad \mbox{where\ } 
h:=\frac{1}{x^2+y^2+z^2+1}.
\]
Let $U(x_0,x_1,x_2,x_3)=V(r,x,y,z)$. Then
\begin{equation}\label{3:eq1}
\Delta U = \frac{1}{8r^3 h^3}\left((2rhV_x)_x+(2rhV_y)_y
+(2rhV_z)_z+(8r^3h^3 V_r)_r\right).
\end{equation}
Suppose that $U$ is homogeneous of degree $\alpha$:
\[ U(tx_0,tx_1,tx_2,tx_3)=t^\alpha U(x_0,x_1,x_2,x_3),\quad t>0.\]
Then $V$ can be written in the form
\[
V(r,x,y,z)=r^\alpha w(x,y,z),
\]
and (\ref{3:eq1}) implies
\begin{equation}\label{3:eq2}
 \Delta U= \frac{r^{\alpha-2}}{4h^3}
 \left((hw_x)_x+(hw_y)_y+(hw_z)_z+4\alpha(\alpha+2)h^3w\right).
\end{equation}
We now introduce the function
\[ u(x,y,z)=w(x,y,z)(x^2+y^2+z^2+1)^{-1/2}  .\]
Then a direct calculation changes (\ref{3:eq2}) to
\begin{equation}\label{3:eq3}
\Delta U=\frac{r^{\alpha-2}}{4h^{5/2}}\left(u_{xx}+u_{yy}
+u_{zz}+(3+4\alpha(\alpha+2))h^2 u\right).
\end{equation}
If $3+4\alpha(\alpha+2)=0$ then $U$ is harmonic if and only if $u$ is harmonic.
Noting that $3+4\alpha(\alpha+2)=(2\alpha+1)(2\alpha+3)$,
we obtain the following theorem.

\begin{thm}\label{3:t1}
Let $D$ be an open subset of $\S^3$ not containing $(1,0,0,0)$, let
$E=\{(rx_0,rx_1,rx_2,rx_3): r>0, (x_0,x_1,x_2,x_3)\in D\}$, and let
$F=P(D)$ be the stereographic image of $D$.
Let the function $U:E\to\R$ be homogeneous of degree $-\frac12$ or
$-\frac32$, and let $w:F\to\R$ satisfy $U=w\circ P$ on $D$.
Then  $U$ is harmonic on $E$ if and only if $w(x,y,z)(x^2+y^2+z^2+1)^{-1/2}$
is harmonic on~$F$.
\end{thm}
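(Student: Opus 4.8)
\noindent\textbf{How I would prove Theorem \ref{3:t1}.}
The statement is the payoff of the computation in (\ref{3:eq1})--(\ref{3:eq3}), so the plan is to assemble those identities after pinning down the domains. The first step is to take $\alpha\in\{-\frac12,-\frac32\}$: by the factorization $3+4\alpha(\alpha+2)=(2\alpha+1)(2\alpha+3)$, these are exactly the values that kill the zeroth-order term in (\ref{3:eq3}). The second step is domain bookkeeping. Writing $D=E\cap\S^3$, the restriction of $P$ to $D$ is a real-analytic diffeomorphism onto $F$, and $Q$ restricts to a real-analytic diffeomorphism from $(0,\infty)\times F$ onto $E$. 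The only point needing a word is $E\subseteq\mathrm{range}(Q)=\R^4\setminus\{(x_0,0,0,0):x_0\ge0\}$: a point of $\S^3$ whose last three coordinates vanish is $\pm(1,0,0,0)$; the point $(1,0,0,0)$ is excluded from $D$ by hypothesis, $(-1,0,0,0)$ scales under $r>0$ into the negative $x_0$-axis, and $E$ omits the origin since $r>0$, so $E$ avoids the whole excluded ray.

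Third, I would transfer $U$ to the curvilinear picture. For $(r,x,y,z)\in(0,\infty)\times F$ we have $P^{-1}(x,y,z)\in D$, so homogeneity of $U$ of degree $\alpha$ together with $U|_D=w\circ P$ gives
\[
 (U\circ Q)(r,x,y,z)=U\!\left(rP^{-1}(x,y,z)\right)=r^\alpha U\!\left(P^{-1}(x,y,z)\right)=r^\alpha w(x,y,z).
\]
Thus $U\circ Q$ is precisely the function $V$ of degree $\alpha$ that feeds the derivation (\ref{3:eq2})--(\ref{3:eq3}), with this same $w$ and with $u(x,y,z)=w(x,y,z)(x^2+y^2+z^2+1)^{-1/2}$, $h=(x^2+y^2+z^2+1)^{-1}$. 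Since $Q$ is a real-analytic diffeomorphism and $r\mapsto r^\alpha$ is smooth on $(0,\infty)$, the identity $w(x,y,z)=(U\circ Q)(1,x,y,z)$ shows that $w$, hence $u$, is $C^2$ on $F$ exactly when $U$ is $C^2$ on $E$ (and $U=V\circ Q^{-1}$ recovers $U$ from $u$ in the other direction); so ``harmonic'' is meaningful on both sides under the hypotheses of each implication.

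Finally, for our choice of $\alpha$ equation (\ref{3:eq3}) says that at the point $Q(r,x,y,z)\in E$,
\[
 \Delta U=\frac{r^{\alpha-2}}{4h^{5/2}}\left(u_{xx}+u_{yy}+u_{zz}\right).
\]
The prefactor $r^{\alpha-2}/(4h^{5/2})$ is strictly positive throughout $(0,\infty)\times F$, so $\Delta U$ vanishes at $Q(r,x,y,z)$ if and only if $\Delta u$ vanishes at $(x,y,z)$; as $Q$ maps onto $E$ and $(x,y,z)$ ranges over all of $F$, this yields the stated equivalence. I do not anticipate a genuine obstacle here: all the analytic content lives in (\ref{3:eq1})--(\ref{3:eq3}), and what remains is the short verification that $E$ sits inside the range of $Q$, plus the observation that harmonicity is a local condition transported by the diffeomorphism $Q$, with the nonvanishing prefactor in (\ref{3:eq3}) upgrading ``implies'' to ``if and only if.''
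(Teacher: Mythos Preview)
Your proposal is correct and follows the same approach as the paper: the theorem is simply the observation that equation (\ref{3:eq3}) reduces to $\Delta U=\tfrac{r^{\alpha-2}}{4h^{5/2}}\Delta u$ when $\alpha\in\{-\tfrac12,-\tfrac32\}$, with the nonvanishing prefactor giving the equivalence. The paper treats this as an immediate consequence of the computations (\ref{3:eq1})--(\ref{3:eq3}) and the factorization $3+4\alpha(\alpha+2)=(2\alpha+1)(2\alpha+3)$; your additional care with the domain bookkeeping (verifying $E\subseteq\mathrm{range}(Q)$ and the transfer of $C^2$-regularity) makes explicit what the paper leaves to the reader, but the argument is the same.
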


\section{Five-cyclide coordinate system on $\R^3$}\label{coord}
We introduce sphero-conal coordinates
\[ r>0,\quad a_0<s_1<a_1<s_2<a_2<s_3<a_3 ,\]
on $\R^4$ as explained in Section \ref{sphero} with $k=3$.
Then $s_1,s_2,s_3$ form a coordinate system for the intersection of
the hypersphere $\S^3$ with the positive cone in $\R^4$.  Using the
stereographic projection $P$ from Section~\ref{stereo} we project these
coordinates to $\R^3$. We obtain a coordinate system for the set
\begin{equation}\label{4:set}
 T=\{(x,y,z): x,y,z>0,\, x^2+y^2+z^2>1 \} .
\end{equation}
Explicitly,
\begin{equation}\label{4:system12a}
 x= \frac{x_1}{1-x_0},\quad  y=\frac{x_2}{1-x_0},
 \quad z=\frac{x_3}{1-x_0} ,
\end{equation}
where
\begin{equation}\label{4:system12b}
x_j^2=\frac{\prod_{i=1}^3(s_i-a_j)}{\prod_{j\ne i=0}^3 (a_i-a_j)},\quad j=0,1,2,3.
\end{equation}
Conversely, the coordinates $s_1,s_2,s_3$ of a point $(x,y,z)\in T$ are the solutions of
\begin{equation}\label{4:surface}
\frac{(x^2+y^2+z^2-1)^2}{s-a_0}+ \frac{4x^2}{s-a_1}+\frac{4y^2}{s-a_2}+ \frac{4z^2}{s-a_3}=0 .
\end{equation}
Since sphero-conal coordinates are orthogonal and the stereographic
projection preserves angles, 5-cyclide coordinates are orthogonal, too.
This is the twelfth coordinate system in Miller (1977) \cite[page 210]{Miller}.
Miller uses a slightly different notation:
$a_0=0$, $a_1=1$, $a_2=b$, $a_3=a$, and
$s_1=\rho$, $s_2=\nu$, $s_3=\mu$. Also, $x,z$ are interchanged.

In order to calculate the scale factors for the 5-cyclide coordinate system
we proceed as follows.  We start with
\[
\frac{\partial x}{\partial s_i}=\frac{1}{1-x_0}\frac{\partial x_1}
{\partial s_i}+\frac{x_1}{(1-x_0)^2}\frac{\partial x_0}{\partial s_i},
\]
and similar formulas for the derivatives of $y$ and $z$.  Then using
\[
x_0^2+x_1^2+x_2^2+x_3^2=1, \quad x_0 \frac{\partial x_0}{\partial s_i}
+x_1\frac{\partial x_1}{\partial s_i}+x_2 \frac{\partial x_2}{\partial s_i}
+x_3\frac{\partial x_3}{\partial s_i}=0
\]
a short calculation gives
\[
\frac{\partial x}{\partial s_i}\frac{\partial x}{\partial s_j}
+\frac{\partial y}{\partial s_i}\frac{\partial y}{\partial s_j}
+\frac{\partial z}{\partial s_i}\frac{\partial z}{\partial s_j}
=\frac{1}{(1-x_0)^2} \sum_{\ell=0}^3 \frac{\partial x_\ell}{\partial s_i}
\frac{\partial x_\ell}{\partial s_j}.
\]
This confirms that 5-cyclide coordinates are orthogonal and
from (\ref{2:scaling}) we obtain the squares of their scale factors
\begin{equation}\label{4:scaling1}
h_i^2=\frac{1}{16}\left(\frac{(\rho^2-1)^2}{(s_i-a_0)^2}
+\frac{4x^2}{(s_i-a_1)^2}+\frac{4y^2}{(s_i-a_2)^2}+\frac{4z^2}
{(s_i-a_3)^2} \right) ,
\end{equation}
where $\rho^2=x^2+y^2+z^2$, or, equivalently,
\begin{eqnarray}
h_1^2&=&\frac1{16}(\rho^2+1)^2\frac{(s_3-s_1)(s_2-s_1)}
{(s_1-a_0)(a_1-s_1)(a_2-s_1)(a_3-s_1)},\label{4:h1}\\
h_2^2&=&\frac1{16}(\rho^2+1)^2\frac{(s_2-s_1)(s_3-s_2)}
{(s_2-a_0)(s_2-a_1)(a_2-s_2)(a_3-s_2)},\label{4:h2}\\
h_3^2&=&\frac1{16}(\rho^2+1)^2\frac{(s_3-s_1)(s_3-s_2)}
{(s_3-a_0)(s_3-a_1)(s_3-a_2)(a_3-s_3)} \label{4:h3}.
\end{eqnarray}

We find harmonic functions by separation of variables in 5-cyclide coordinates as follows.

\begin{thm}\label{4:t1}
Let $w_1:(a_0,a_1)\to\C$, $w_2:(a_1,a_2)\to\C$, $w_3:(a_2,a_3)\to \C$
be solutions of the Fuchsian equation
\begin{equation}\label{4:Fuchs}
\prod_{j=0}^3 (s-a_j)\left[w''+\frac12 \sum_{j=0}^3
\frac{1}{s-a_j} w'\right] +\left(\frac{3}{16}s^2+\lambda_1s+\lambda_2\right)w =0,
\end{equation}
where $\lambda_1,\lambda_2$ are given (separation) constants.
Then the function
\begin{equation}\label{4:sepsol}
 u(x,y,z)=(x^2+y^2+z^2+1)^{-1/2}w_1(s_1)w_2(s_2)w_3(s_3)
\end{equation}
is a harmonic function on the set (\ref{4:set}).
\end{thm}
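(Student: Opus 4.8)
The plan is to deduce the theorem by composing two facts already in hand: the separation-of-variables result for sphero-conal coordinates on $\R^4$ from Section~\ref{sphero} (the case $k=3$), and Theorem~\ref{3:t1} on the stereographic projection. The one thing that has to click is the arithmetic of the separation constants: the coefficient $\frac{3}{16}$ of $s^2$ in (\ref{4:Fuchs}) is exactly the value of $\lambda_0$ for which the accompanying Euler equation (\ref{2:Euler}) has homogeneity exponents $-\frac{1}{2}$ and $-\frac{3}{2}$, which are precisely the degrees admitted by Theorem~\ref{3:t1}.

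Concretely, I would take $k=3$ and $\lambda_0=\frac{3}{16}$ in (\ref{2:Fuchs}), so that the Fuchsian equation there becomes (\ref{4:Fuchs}) and $w_1,w_2,w_3$ are admissible solutions of it on the intervals $(a_0,a_1)$, $(a_1,a_2)$, $(a_2,a_3)$. The indicial equation $\alpha^2+2\alpha+4\lambda_0=0$ of (\ref{2:Euler}) then factors as $(\alpha+\frac{1}{2})(\alpha+\frac{3}{2})=0$, so $w_0(r):=r^{-1/2}$ solves (\ref{2:Euler}). By the separation result of Section~\ref{sphero}, the function
\[
 U(x_0,x_1,x_2,x_3)=r^{-1/2}\,w_1(s_1)\,w_2(s_2)\,w_3(s_3)
\]
is harmonic on the positive cone (\ref{poscone}) of $\R^4$, where $r,s_1,s_2,s_3$ denote the sphero-conal coordinates of $(x_0,x_1,x_2,x_3)$. (Harmonicity being linear, the possibly complex-valued $w_i$ cause no difficulty: treat real and imaginary parts separately, or simply run the computation over $\C$.)

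I would then verify that $U$ is homogeneous of degree $-\frac{1}{2}$: the defining relations (\ref{2:sphero2}) are homogeneous of degree $0$ in the $x_j$, so $s_1,s_2,s_3$ are unchanged under $(x_0,\dots,x_3)\mapsto(tx_0,\dots,tx_3)$ while $r\mapsto tr$, giving $U(tx_0,tx_1,tx_2,tx_3)=t^{-1/2}U(x_0,x_1,x_2,x_3)$. Now apply Theorem~\ref{3:t1} with $D$ the intersection of $\S^3$ with the open positive cone of $\R^4$ (so that $(1,0,0,0)\notin D$), $E$ the open positive cone, and $F=P(D)$. For $(x_0,\dots,x_3)\in D$ one has $1-x_0>0$ and $x^2+y^2+z^2=(x_1^2+x_2^2+x_3^2)/(1-x_0)^2=(1+x_0)/(1-x_0)>1$, so $F$ is exactly the set $T$ of (\ref{4:set}); moreover, by the construction in Section~\ref{coord} the $5$-cyclide coordinates of a point $(x,y,z)\in T$ are the sphero-conal coordinates of $P^{-1}(x,y,z)\in\S^3$. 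Restricting $U$ to $\S^3$, where $r=1$, we obtain $U=w\circ P$ on $D$ with $w(x,y,z)=w_1(s_1)w_2(s_2)w_3(s_3)$. Theorem~\ref{3:t1} then asserts that $U$ is harmonic on $E$ if and only if $w(x,y,z)(x^2+y^2+z^2+1)^{-1/2}$ is harmonic on $F$; since the former has been established, the latter holds, and this function is exactly $u$ in (\ref{4:sepsol}).

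There is no genuine analytic obstacle, since harmonicity is a local statement and both ingredients are already proved; the proof is essentially bookkeeping. The points demanding care are the three flagged above: the numerical coincidence $\lambda_0=\frac{3}{16}$ with the exponents $-\frac{1}{2},-\frac{3}{2}$; the scale invariance of $s_1,s_2,s_3$ (which is what makes $U$ have the homogeneity degree required by Theorem~\ref{3:t1}); and the identification of $F$ with $T$ together with the compatibility of the two coordinate systems under $P$, which I would double-check against the explicit formulas (\ref{4:system12a})--(\ref{4:surface}).
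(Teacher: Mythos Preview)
Your proof is correct and follows essentially the same route as the paper's: define $U=r^{-1/2}w_1(s_1)w_2(s_2)w_3(s_3)$ on the positive cone of $\R^4$, use the separation result of Section~\ref{sphero} with $k=3$, $\lambda_0=\tfrac{3}{16}$ to see that $U$ is harmonic and homogeneous of degree $-\tfrac12$, and then invoke Theorem~\ref{3:t1}. You have simply supplied more of the bookkeeping details (the indicial equation, the scale invariance of the $s_i$, and the identification $P(D)=T$) than the paper writes out explicitly.
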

\begin{proof}
Using sphero-conal coordinates $r,s_1,s_2,s_3$ on $\R^4$, we define a
function $U$ in the positive cone of $\R^4$ by
\[
U(x_0,x_1,x_2,x_3)=r^{-1/2}w_1(s_1)w_2(s_2)w_3(s_3).
\]
The function $r^{-1/2}$ is a solution of (\ref{2:Euler}) when
$k=3$, $\lambda_0=\frac{3}{16}$.  The results from Section \ref{sphero}
imply that $U$ is harmonic, and, of course, $U$ is homogeneous of
degree $-\frac12$.  The function $w$ defined on the set (\ref{4:set}) by
$U=w\circ P$ is given in 5-cyclide coordinates by
\[
w(x,y,z)=w_1(s_1)w_2(s_2)w_3(s_3).
\]
Therefore, Theorem \ref{3:t1} gives the statement of the theorem.
\end{proof}

Equation (\ref{4:Fuchs}) has five regular singularities at $s=a_0,a_1,a_2,a_3,\infty$. The exponents at the finite singularities are $0$ and $\frac12$.
Using (\ref{2:infinity}), we find that the exponents at infinity are $\frac14$ and $\frac34$. So all five singularities are elementary in the sense of Ince \cite{Ince}.
Equation (\ref{4:Fuchs}) is one of the standard equations in the classification of Ince \cite[page 500]{Ince}.

We define the 5-cyclide coordinates $s_1,s_2,s_3$ for an arbitrary
point $(x,y,z)\in\R^3$ as the zeros $s_1\le s_2\le s_3$ of the cubic equation
\begin{equation}\label{4:cubic}
\prod_{j=0}^ 3(s-a_j)\left[\frac{(x^2+y^2+z^2-1)^2}{s-a_0}
+\frac{4x^2}{s-a_1}+\frac{4y^2}{s-a_2}+ \frac{4z^2}{s-a_3}\right]=0 .
\end{equation}
For example, $s_j(0,0,0)=a_j$ for $j=1,2,3$.
Each function $s_j:\R^3\to [a_{j-1},a_j]$ is continuous.
We observe that, in general, there are 16 different points in $\R^3$ which have the
same coordinates $s_1,s_2,s_3$. If $(x,y,z)$ is one of these points
the other ones are obtained by applying the group generated by inversion at $\S^2$
\begin{equation}\label{4:inversion}
\sigma_0(x,y,z)=\rho^{-2}(x,y,z)
\end{equation}
and reflections at the coordinate planes
\begin{equation}\label{4:reflections}
\sigma_1(x,y,z)=(-x,y,z),\,\sigma_2(x,y,z)=(x,-y,z),\, \sigma_3(x,y,z)=(x,y,-z).
\end{equation}
It is of interest to determine the sets where $s_j=a_{j-1}$ or $s_j=a_j$.
We obtain
\begin{eqnarray}
s_1&=&a_0 \mbox{\ iff\ } x^2+y^2+z^2=1,\\
s_1&=&a_1\mbox{\ iff\ } x=0 \mbox{\ and\  } 
\frac{(\rho^2-1)^2}{a_1-a_0}+\frac{4y^2}{a_1-a_2}
+\frac{4z^2}{a_1-a_3}\ge 0, \\
s_2&=&a_1\mbox{\ iff\ }x=0 \mbox{\ and\ } 
\frac{(\rho^2-1)^2}{a_1-a_0}+ 
\frac{4y^2}{a_1-a_2}+\frac{4z^2}{a_1-a_3}\le 0, \\
s_2&=&a_2\mbox{\ iff\ } y=0 \mbox{\ and\ } 
\frac{(\rho^2-1)^2}{a_2-a_0}+
\frac{4x^2}{a_2-a_1}+\frac{4z^2}{a_2-a_3}\ge 0,\label{4:set4} \\
s_3&=&a_2\mbox{\ iff\ } y=0 \mbox{\ and\ } 
\frac{(\rho^2-1)^2}{a_2-a_0}
+\frac{4x^2}{a_2-a_1}+\frac{4z^2}{a_2-a_3}\le 0, \\
s_3&=&a_3 \mbox{\ iff\ } z=0.
\end{eqnarray}

We define the sets (consisting each of two closed curves)
\begin{eqnarray}\label{4:A1}
A_1&:=&\{(x,y,z)\in\R^3: s_1=s_2=a_1\}\\
&=&\{(x,y,z): x=0, \frac{(\rho^2-1)^2}{a_1-a_0}
+\frac{4y^2}{a_1-a_2}+\frac{4z^2}{a_1-a_3}=0\}, \nonumber
\end{eqnarray}
see Figure \ref{4:fig1}, and
\begin{eqnarray}\label{4:A2}
A_2&:=&\{(x,y,z)\in\R^3: s_2=s_3=a_2\}\\
&=&\{(x,y,z): y=0, \frac{(\rho^2-1)^2}{a_2-a_0}
+\frac{4x^2}{a_2-a_1}+\frac{4z^2}{a_2-a_3}=0\}, \nonumber
\end{eqnarray}
see Figure \ref{4:fig2}.
\begin{figure}[h]
\centering\includegraphics[width=3in,height=3in]{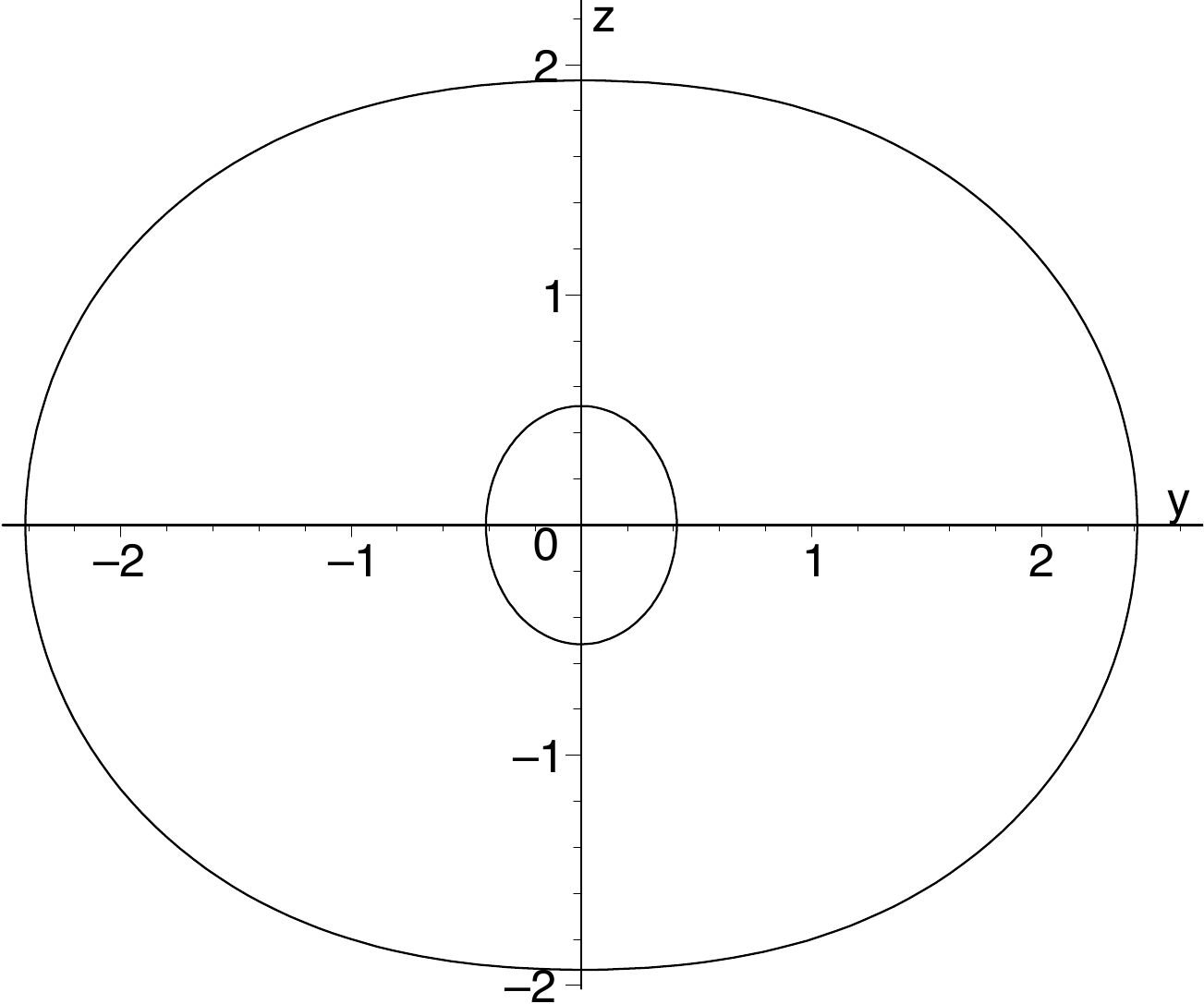}
\caption{The set $A_1$ in the $y,z$-plane for $a_i=i$.\label{4:fig1}}
\end{figure}
\begin{figure}[h]
\centering\includegraphics[width=2in,height=3.5in]{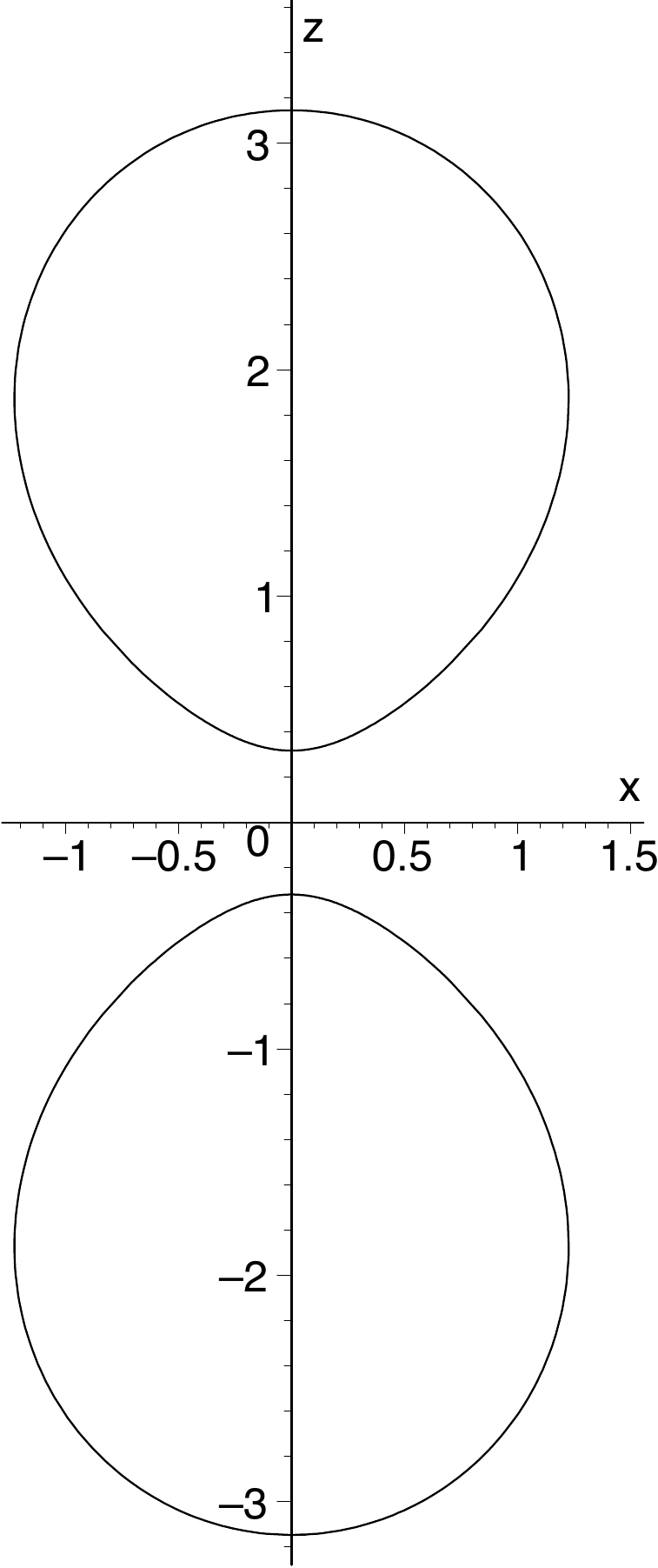}
\caption{The set $A_2$ in the $x,z$-plane for $a_i=i$.\label{4:fig2}}
\end{figure}
Clearly, $s_j$ is analytic at all points $(x,y,z)$ at which $s_j$ is a
simple zero of the cubic equation (\ref{4:cubic}).  Therefore, $s_1$ is
analytic on $\R^3\setminus A_1$, $s_2$ is analytic on
$\R^3\setminus(A_1\cup A_2)$, and $s_3$ is analytic on $\R^3\setminus A_2$.

We may use (\ref{4:sepsol}) to define $u(x,y,z)$ for all $(x,y,z)\in\R^3$.
Since the solutions $w_1,w_2,w_3$ of (\ref{4:Fuchs}) have limits at the
end points of their intervals of definition
(because the exponents are $0$ and $\frac12$ there),
we see that $u$ is a continuous functions on $\R^3$. The
function $(x^2+y^2+z^2+1)^{1/2}u(x,y,z)$ is invariant under
$\sigma_i$, $i=0,1,2,3$. In general, $u$ is harmonic only away from
the coordinate planes and the unit sphere.  In fact, we observe
that $u$ is a bounded function which converges to $0$ at infinity,
so, by Liouville's theorem, $u$ cannot be harmonic on all of $\R^3$
unless it is identically zero.

\section{First two-parameter Sturm-Liouville problem}\label{SL1}
We consider equation (\ref{4:Fuchs}) on the intervals $(a_1,a_2)$
and $(a_2,a_3)$ and write it in formally self-adjoint form.  Setting
\begin{equation}\label{5:p}
 \omega(s):=\left|(s-a_0)(s-a_1)(s-a_2)(s-a_3)\right|^{1/2},
\end{equation}
we obtain two Sturm-Liouville  equations involving two parameters
\begin{eqnarray}
&&(\omega(s_2)w_2')'+\frac{1}{\omega(s_2)}
\left(\frac3{16}s_2^2+\lambda_1 s_2+\lambda_2\right) w_2 =0,
\quad a_1<s_2<a_2,\label{5:SL1}\\
&&(\omega(s_3)w_3')'-\frac{1}{\omega(s_3)}\left(\frac3{16}s_3^2+\lambda_1
s_3+\lambda_2\right) w_3 =0,\quad a_2<s_3<a_3.\label{5:SL2}
\end{eqnarray}
In (\ref{5:SL1}) $w_2$ is a function of $s_2$ and the derivatives are
taken with respect to $s_2$.  In (\ref{5:SL2}) $w_3$ is a function
of $s_3$ and the derivatives are taken with respect to $s_3$.
We simplify the equations by substituting $t_j=\Omega(s_j)$,
$u_j(t_j)=w_j(s_j)$, where $\Omega(s)$ is the elliptic integral
(see for instance \cite{Klein91})
\begin{equation}\label{5:st}
 \Omega(s):=\int_{a_0}^s \frac{d\sigma}{\omega(\sigma)} .
\end{equation}
This is an increasing  absolutely continuous
function $\Omega:[a_0,a_3]\to [0,b_3]$, where $b_j:=\Omega(a_j)$.
Let $\phi:[0,b_3]\to [a_0,a_3]$ be the inverse function of $\Omega$.
Then (\ref{5:SL1}), (\ref{5:SL2}) become
\begin{eqnarray}
 u_2''+\left(\frac3{16} \{\phi(t_2)\}^2+\lambda_1 \phi(t_2)
 +\lambda_2\right) u_2 &=&0,\quad b_1\le t_2\le b_2 ,\label{5:SL3}\\
 u_3''-\left(\frac3{16} \{\phi(t_3)\}^2+\lambda_1 \phi(t_3)
 +\lambda_2\right) u_3 &=&0,\quad b_2\le t_3\le b_3 .\label{5:SL4}
\end{eqnarray}

We add the boundary conditions
\begin{equation}\label{5:bc}
 u_2'(b_1)=u_2'(b_2)=u_3'(b_2)=u_3'(b_3)=0 .
\end{equation}

Differential equations (\ref{5:SL3}), (\ref{5:SL4}) together with boundary
conditions (\ref{5:bc}) pose a two-parameter Sturm-Liouville
eigenvalue problem. For the theory of such multiparameter problems we
refer to the books \cite{AtkMing11,Volkmerbook} and the references
therein.  A pair $(\lambda_1,\lambda_2)$ is called an {\it eigenvalue} if
there exist (nontrivial) {\it eigenfunctions} $u_2(t_2)$ and $u_3(t_3)$
which satisfy (\ref{5:SL3}), (\ref{5:SL4}), (\ref{5:bc}).  The two-parameter
problem is {\it right-definite} in the sense that
\[
\left|
\begin{array}{cc}
\phi(t_2) & 1 \\ -\phi(t_3) & -1
\end{array}
\right|
=\phi(t_3)-\phi(t_2)>0 \quad
\mbox{\ for\ } b_1<t_2<b_2<t_3<b_3.
\]
However, this determinant is not positive on the closed
rectangle $[b_1,b_2]\times[b_2,b_3]$. This lack of {\it uniform
right-definiteness} make some proofs in this section a little longer
than they would be otherwise.

We have the following Klein oscillation theorem; see \cite[Theorem 5.5.1]{AtkMing11}.

\begin{thm}\label{5:t1}
For every $\n=(n_2,n_3)\in\N_0^2$, there exists a uniquely determined
eigenvalue $(\lambda_{1,\n}, \lambda_{2,\n})\in\R^2$ admitting an eigenfunction
$u_2$ with exactly $n_2$ zeros in $(b_1,b_2)$ and an eigenfunction $u_3$
with exactly $n_3$ zeros in $(b_2,b_3)$.
\end{thm}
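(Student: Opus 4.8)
The plan is to reduce Theorem \ref{5:t1} to the general Klein oscillation theorem for right-definite multiparameter Sturm--Liouville problems, as stated in \cite[Theorem 5.5.1]{AtkMing11}, while working around the failure of uniform right-definiteness on the closed rectangle $[b_1,b_2]\times[b_2,b_3]$. The two equations (\ref{5:SL3}), (\ref{5:SL4}) with the Neumann conditions (\ref{5:bc}) are of the form $u_j''+\bigl(q_j(t_j)+\lambda_1 p_{j1}(t_j)+\lambda_2 p_{j2}(t_j)\bigr)u_j=0$, with $p_{21}(t_2)=\phi(t_2)$, $p_{22}=1$, and $p_{31}(t_3)=-\phi(t_3)$, $p_{32}=-1$; the relevant $2\times2$ determinant equals $\phi(t_3)-\phi(t_2)$, which is strictly positive on the open rectangle since $\phi$ is strictly increasing and $t_2<b_2<t_3$. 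First I would verify the standing hypotheses of the abstract theory: each $q_j,p_{ji}$ is continuous (indeed real-analytic in the interior, and continuous up to the endpoints since $\phi$ extends continuously to $[b_1,b_3]$), each one-parameter problem obtained by freezing the other parameter is a regular Sturm--Liouville problem on a compact interval, and the determinant condition gives right-definiteness in the interior.

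The core of the argument is the oscillation/counting structure. For fixed $\n=(n_2,n_3)\in\N_0^2$ I would introduce, for $j=2,3$, the set $\Lambda_j^{(n_j)}\subset\R^2$ of parameters $(\lambda_1,\lambda_2)$ for which the corresponding Neumann problem for $u_j$ has an eigenfunction with exactly $n_j$ interior zeros. By the classical (one-parameter) oscillation theorem applied in each fixed direction, together with monotonicity of the eigenvalues in the parameters — here is where $p_{j2}$ having constant sign ($+1$ for $j=2$, $-1$ for $j=3$) and $p_{j1}=\pm\phi$ having constant sign on the interval are used — each $\Lambda_j^{(n_j)}$ is a smooth curve that is a graph over, and is monotone in, a suitable linear functional of $(\lambda_1,\lambda_2)$; more precisely $\Lambda_2^{(n_2)}$ is transversal to the $\Lambda_3^{(n_3)}$ family. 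An eigenvalue of the two-parameter problem with index $\n$ is exactly a point of $\Lambda_2^{(n_2)}\cap\Lambda_3^{(n_3)}$. Existence and uniqueness of this intersection is the content of \cite[Theorem 5.5.1]{AtkMing11}; I would either quote it directly once the hypotheses are checked, or, if a self-contained argument is wanted, establish it by a degree-theoretic / monotonicity argument: show that along $\Lambda_2^{(n_2)}$ the zero-count of the $u_3$-equation increases strictly and continuously from values $<n_3$ to values $>n_3$, forcing exactly one crossing.

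The step I expect to be the main obstacle is handling the lack of uniform right-definiteness: the determinant $\phi(t_3)-\phi(t_2)$ vanishes as $t_2\uparrow b_2$ and $t_3\downarrow b_2$ simultaneously, i.e. on the corner of the rectangle. This means the abstract theorem cannot be invoked in its simplest "uniformly right-definite" form, and the a priori bounds/compactness that normally guarantee the intersection point does not escape to infinity must be argued by hand. I would control this by a direct estimate: using the substitution to the variables $t_j$ the equations are already in Liouville normal form with bounded potentials on compact intervals, so for the $u_2$-problem the $n_2$-th Neumann eigenvalue curve $\Lambda_2^{(n_2)}=\{\,\mu_{n_2}(\lambda_1)=\lambda_2\,\}$ (solving for $\lambda_2$) satisfies two-sided linear-growth bounds in $\lambda_1$ coming from comparison with constant-coefficient problems, and similarly for $\Lambda_3^{(n_3)}$; the opposite signs of $p_{22}$ and $p_{32}$ then force the two curves to cross. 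I would also note the boundary-behaviour subtlety that the exponents $0$ and $\tfrac12$ at $s=a_j$ make $w_j$ have a finite limit and the Neumann condition $u_j'=0$ at $t_j=b_1,b_2,b_3$ is the correct translation of regularity/single-valuedness of the harmonic function — but that identification belongs to the later Dirichlet-problem sections and is not needed for the bare existence/uniqueness statement here.
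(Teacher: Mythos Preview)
Your approach is essentially the paper's: the paper gives no proof at all for Theorem~\ref{5:t1} beyond the citation ``see \cite[Theorem 5.5.1]{AtkMing11}'', so quoting that result after checking the right-definiteness hypothesis is exactly what is intended. One point worth noting: your main worry---the failure of \emph{uniform} right-definiteness at the corner $t_2=t_3=b_2$---is misplaced for this particular theorem. The paper's remark that ``this lack of uniform right-definiteness make[s] some proofs in this section a little longer'' refers to the later estimates (Theorems~\ref{5:t2} and especially~\ref{5:t4}), not to the Klein oscillation theorem itself; the version in \cite[Theorem 5.5.1]{AtkMing11} requires only right-definiteness on the open rectangle, which you have already verified, so the degree-theoretic workaround and the growth bounds on the eigenvalue curves are unnecessary here.
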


We state a result on the distribution of eigenvalues; compare
with \cite[Chapter 8]{AtkMing11}.

\begin{thm}\label{5:t2}
There are positive constants $A_1,A_2,A_3,A_4$ such that, for all $\n\in\N_0^2$,
\begin{eqnarray}\label{5:ineq}
& -A_1(n_2^2+n_3^2+1)\le \lambda_{1,\n}\le -A_2(n_2^2+n_3^2)+A_3,& \label{5:est1}\\
&|\lambda_{2,\n}|\le A_4 (n_2^2+n_3^2+1).&\label{5:est2}
\end{eqnarray}
\end{thm}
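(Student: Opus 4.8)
The plan is to extract the two-sided bounds on $\lambda_{1,\n}$ and $\lambda_{2,\n}$ from the variational (min--max) characterization of the eigenvalues, which for a right-definite two-parameter problem replaces the classical Sturm--Liouville quotient. The starting observation is that if $(\lambda_{1,\n},\lambda_{2,\n})$ is the eigenvalue with oscillation count $\n=(n_2,n_3)$, then $u_2$ solves a one-parameter Sturm--Liouville problem on $[b_1,b_2]$ with Neumann conditions and potential $q_2(t_2):=\tfrac{3}{16}\phi(t_2)^2+\lambda_1\phi(t_2)$, with eigenvalue parameter $-\lambda_2$; and likewise $u_3$ solves one on $[b_2,b_3]$ with potential $-q_3(t_3):=-(\tfrac3{16}\phi(t_3)^2+\lambda_1\phi(t_3))$ and eigenvalue parameter $+\lambda_2$. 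So for the fixed value $\lambda_1=\lambda_{1,\n}$ the number $-\lambda_{2,\n}$ is the $n_2$-th Neumann eigenvalue of $-u''+q_2 u=\zeta u$ on $[b_1,b_2]$, and simultaneously $+\lambda_{2,\n}$ is the $n_3$-th Neumann eigenvalue of $-u''-q_3 u=\eta u$ on $[b_2,b_3]$. Classical Weyl asymptotics / the min--max bound for Neumann eigenvalues on an interval of length $L$ give
\begin{equation}\label{5:weyl}
\frac{\pi^2 m^2}{L^2}+\min q \;\le\; \zeta_m \;\le\; \frac{\pi^2 m^2}{L^2}+\max q,
\end{equation}
and the analogous statement with $-q_3$. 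Here $\phi$ maps into the fixed compact interval $[a_0,a_3]$, so $|\phi(t)|\le C_0$ for a constant $C_0$ depending only on the $a_j$; hence $\min q_2$ and $\max q_2$ differ from $\tfrac3{16}\phi^2$ by at most $C_0|\lambda_1|$, i.e.\ $|q_2|,|q_3|\le C_5+C_0|\lambda_{1,\n}|$ pointwise.

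Carrying this out: write $L_2=b_2-b_1$, $L_3=b_3-b_2$. Adding the two Neumann bounds, $(-\lambda_{2,\n})+(\lambda_{2,\n})=0$ gives, after using \eqref{5:weyl} on each interval,
\[
\frac{\pi^2 n_2^2}{L_2^2}-\big(C_5+C_0|\lambda_{1,\n}|\big)
+\frac{\pi^2 n_3^2}{L_3^2}-\big(C_5+C_0|\lambda_{1,\n}|\big)\;\le\;0
\]
(taking the lower bound on each piece), which rearranges to
\[
\frac{\pi^2 n_2^2}{L_2^2}+\frac{\pi^2 n_3^2}{L_3^2}\;\le\;2C_0|\lambda_{1,\n}|+2C_5,
\]
and this is precisely the left inequality $-A_1(n_2^2+n_3^2+1)\le\lambda_{1,\n}$ once we know the sign of $\lambda_{1,\n}$ and absorb constants; the sign (that $\lambda_{1,\n}$ is eventually negative, hence the upper bound $\lambda_{1,\n}\le -A_2(n_2^2+n_3^2)+A_3$) comes from using the \emph{upper} bound in \eqref{5:weyl} instead: then $0\ge \tfrac{\pi^2 n_2^2}{L_2^2}-\tfrac3{16}C_0^2-C_0|\lambda_{1,\n}|+\tfrac{\pi^2 n_3^2}{L_3^2}-\tfrac3{16}C_0^2-C_0|\lambda_{1,\n}|$ forces $|\lambda_{1,\n}|$ to grow at least like $n_2^2+n_3^2$, and a short argument with the first equation pins down that the growth is with a negative sign (it is the $q_2$ term, proportional to $+\lambda_1\phi$ with $\phi>0$ on these intervals, that must be large and negative to cancel the large positive Laplacian eigenvalue on one side while still matching the other side). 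Once $|\lambda_{1,\n}|\le A_1(n_2^2+n_3^2+1)$ is established, substituting back into the one-interval Neumann bound for, say, the $n_2$-th eigenvalue on $[b_1,b_2]$ yields $|\lambda_{2,\n}|=|\zeta_{n_2}|\le \tfrac{\pi^2 n_2^2}{L_2^2}+C_5+C_0|\lambda_{1,\n}|\le A_4(n_2^2+n_3^2+1)$, which is \eqref{5:est2}.

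**Main obstacle.** The delicate point is the failure of uniform right-definiteness noted in the text just before the theorem: the determinant $\phi(t_3)-\phi(t_2)$ vanishes at $t_2=t_3=b_2$, so the clean decoupling ``fix $\lambda_{1,\n}$, read off $-\lambda_{2,\n}$ and $+\lambda_{2,\n}$ as Neumann eigenvalues'' is exactly the structure one wants but one must be careful that the min--max characterization one invokes from \cite{AtkMing11} really does give \eqref{5:weyl}-type two-sided control \emph{uniformly in the other parameter} — in a merely right-definite (not uniformly right-definite) problem the eigenvalue surfaces can be less rigid. I expect the bulk of the work to be in justifying the quantitative min--max inequality \eqref{5:weyl} in this setting — most likely by comparing with the two constant-potential problems obtained by replacing $q_2$ by $\min q_2$ and $\max q_2$ (which are genuinely decoupled and uniformly definite, since a constant shift of potential just shifts $\lambda_2$) and invoking monotonicity of eigenvalues in the potential — and then the arithmetic combining the two intervals is routine. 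A secondary nuisance is handling the regime of small $n_2,n_3$ (where signs need not yet have settled) separately, which is why the ``$+1$'' and the additive constants $A_3$ appear in the statement.
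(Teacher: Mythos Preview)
Your Weyl-type comparison is essentially the same device the paper uses for one half of the argument (the paper phrases it as: if $u''+qu=0$ has a Neumann solution with $m$ zeros on $[a,b]$, then $q$ attains the value $\pi^2m^2/(b-a)^2$ somewhere), and subtracting the two resulting equalities is exactly how the paper gets the \emph{upper} bound $\lambda_{1,\n}\le -A_2(n_2^2+n_3^2)+A_3$. However, you have the two inequalities swapped: your displayed relation $\frac{\pi^2 n_2^2}{L_2^2}+\frac{\pi^2 n_3^2}{L_3^2}\le 2C_0|\lambda_{1,\n}|+2C_5$ is a \emph{lower} bound on $|\lambda_{1,\n}|$, so (once the sign is known) it yields the right-hand inequality in (\ref{5:est1}), not the left-hand one. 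The step you describe as ``using the upper bound in (\ref{5:weyl}) instead'' produces, after the same crude estimate $|q|\le C_5+C_0|\lambda_{1,\n}|$, an inequality of the very same form, hence again only a lower bound on $|\lambda_{1,\n}|$.

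The genuine gap is therefore the lower bound $\lambda_{1,\n}\ge -A_1(n_2^2+n_3^2+1)$, i.e.\ an \emph{upper} bound on $|\lambda_{1,\n}|$. No combination of the Weyl windows $[\pi^2m^2/L^2+\min q,\;\pi^2m^2/L^2+\max q]$ can deliver this: when $\lambda_1\to-\infty$ the windows on both intervals widen like $|\lambda_1|$, and the constraint that the two eigenvalues sum to zero is satisfiable inside them for any $(n_2,n_3)$. (Concretely, with $Q$ decreasing one gets $\min_{[a_1,a_2]}Q=Q(a_2)=\max_{[a_2,a_3]}Q$, so the relevant $\min/\max$ bounds collapse to tautologies.) The paper closes this gap by a finer Sturm comparison that exploits the quadratic shape of $Q(s)=\tfrac{3}{16}s^2+\lambda_1 s+\lambda_2$: if $\lambda_1<-\tfrac38 a_3$ then $Q$ is strictly decreasing on $[a_0,a_3]$, so either $Q(a_2)\ge 0$, in which case $Q\ge \tfrac12(a_2-a_1)(-\lambda_1-\tfrac38 a_3)$ on the left half of $[a_1,a_2]$ and Sturm comparison on that \emph{sub}-interval forces $n_2^2\gtrsim|\lambda_1|$; or $Q(a_2)<0$, and the mirror argument on $(a_2,a_3)$ forces $n_3^2\gtrsim|\lambda_1|$. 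This sub-interval trick is exactly what the whole-interval $\min/\max$ bounds miss.

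Finally, the obstacle you flag (failure of uniform right-definiteness at $t_2=t_3=b_2$) is not what is at stake here; that difficulty surfaces in the eigenfunction bound (Theorem~\ref{5:t4}), not in the eigenvalue estimates of Theorem~\ref{5:t2}.
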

\begin{proof}
If a differential equation $u''+q(t)u=0$ with continuous $q:[a,b]\to\R$
admits a solution $u$ satisfying $u'(a)=u'(b)=0$ and having exactly $m$ zeros
in $(a,b)$, then there is $t\in(a,b)$ such that $q(t)=\frac{\pi^2 m^2}{(b-a)^2}$.
This is shown by comparing with the eigenvalue problem
$u''+\lambda u=0$, $u'(a)=u'(b)=0$.
Applying this fact,
we find $t_2\in(b_1,b_2)$ and $t_3\in(b_2,b_3)$ such that
\begin{eqnarray}\label{5:compare1}
\frac{3}{16}\{\phi(t_2)\}^2 +\lambda_1 \phi(t_2)
+\lambda_2 &=&\frac{\pi^2 n_2^2}{(b_2-b_1)^2}, \\
\frac{3}{16}\{\phi(t_3)\}^2 +\lambda_1 \phi(t_3)
+\lambda_2 &=&-\frac{\pi^2 n_3^2}{(b_3-b_2)^2} \label{5:compare2},
\end{eqnarray}
where we abbreviated $\lambda_j=\lambda_{j,\n}$.
By subtracting (\ref{5:compare1}) from (\ref{5:compare2}), we
obtain
\[
\frac{3}{16}\left(\{\phi(t_3)\}^2-\{\phi(t_2)\}^2\right)
+\lambda_1(\phi(t_3)-\phi(t_2)) = -\frac{\pi^2 n_2^2}{(b_2-b_1)^2}
-\frac{\pi^2 n_3^2}{(b_3-b_2)^2}\le 0 .
\]
Dividing by $\phi(t_3)-\phi(t_2)$ and using
$0<\phi(t_3)-\phi(t_2)\le a_3-a_1$, we obtain the second
inequality in (\ref{5:est1}).

To prove the first inequality in (\ref{5:est1}), suppose that
$\lambda_1<-\frac{3}{8} a_3$. Then the van Vleck polynomial
\begin{equation}\label{5:Q}
 Q(s):=\frac{3}{16} s^2+\lambda_1 s+\lambda_2
\end{equation}
satisfies $Q'(s)=\frac{3}{8}s+\lambda_1<0$ for $s\le a_3$.
Let $c\in(b_1,b_2)$ be determined by $\phi(c)=\frac12(a_1+a_2)$.
If $Q(a_2)\ge 0$ then, for  $t\in[b_1,c]$,
\[
Q(\phi(t))\ge Q(\frac12(a_1+a_2))\ge \frac12(a_2-a_1)
\left(-\lambda_1-\frac{3}{8}a_3\right) .
\]
By Sturm's comparison theorem applied to equation (\ref{5:SL3}), we get
\[
(c-b_1)^2 (a_2-a_1)\left(-\lambda_1-\frac{3}{8}a_3\right) \le 4\pi^2(n_2+1)^2,
\]
which gives the desired inequality.
If $Q(a_2)<0$ we argue similarly working with (\ref{5:SL4}) instead.

Finally, (\ref{5:est2}) follows from (\ref{5:est1}) and (\ref{5:compare1}).
\end{proof}

Let $u_{2,\n}$ and $u_{3,\n}$ denote real-valued eigenfunctions corresponding
to the eigenvalue $(\lambda_{1,\n},\lambda_{2,\n})$.  It is
known \cite[section 3.5]{AtkMing11} (and easy to prove) that the system
of products $u_{2,\n}(t_2)u_{3,\n}(t_3)$, $\n\in\N_0^2$, is orthogonal in
the Hilbert space $H_1$ consisting of measurable functions
$f:(b_1,b_2)\times (b_2,b_3)\to\C$ satisfying
\[
\int_{b_2}^{b_3}\int_{b_1}^{b_2} (\phi(t_3)-\phi(t_2))
\left|f(t_2,t_3)\right|^2\,dt_2\,dt_3<\infty\]
with inner product
\[
\int_{b_2}^{b_3}\int_{b_1}^{b_2} (\phi(t_3)-\phi(t_2)) f(t_2,t_3)
\overline{g(t_2,t_3)}\,dt_2\,dt_3.
\]
We normalize the eigenfunctions so that
\begin{equation}\label{5:norm1}
\int_{b_2}^{b_3}\int_{b_1}^{b_2} (\phi(t_3)-\phi(t_2))
\left\{u_{2,\n}(t_2)\right\}^2\left\{u_{3,\n}(t_3)\right\}^2\,dt_2\,dt_3 =1 .
\end{equation}

We have the following completeness theorem; see \cite[Theorem 6.8.3]{Volkmerbook}.

\begin{thm}\label{5:t3}
The double sequence of functions
\[ u_{2,\n}(t_2)u_{3,\n}(t_3),\quad \n\in \N_0^2 ,\]
forms an orthonormal basis in the Hilbert space $H_1$.
\end{thm}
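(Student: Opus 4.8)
The plan is to recognize the pair (\ref{5:SL3})--(\ref{5:SL4}) together with the boundary conditions (\ref{5:bc}) as a right-definite two-parameter eigenvalue problem in the abstract operator-theoretic sense, and then to invoke the general completeness theorem \cite[Theorem 6.8.3]{Volkmerbook}; the content of the proof is the verification of its hypotheses. First I would put the two scalar problems in the standard form $T_2u_2=\lambda_1A_{21}u_2+\lambda_2A_{22}u_2$ on $L^2(b_1,b_2)$ and $T_3u_3=\lambda_1A_{31}u_3+\lambda_2A_{32}u_3$ on $L^2(b_2,b_3)$, where $T_2:=-d^2/dt_2^2-\frac{3}{16}\phi(t_2)^2$ and $T_3:=-d^2/dt_3^2+\frac{3}{16}\phi(t_3)^2$, each equipped with the Neumann conditions from (\ref{5:bc}), and where $A_{21},A_{22},A_{31},A_{32}$ are multiplication by $\phi(t_2),1,-\phi(t_3),-1$, respectively. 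Because the underlying intervals are bounded and $\phi$ is continuous, hence bounded, on $[0,b_3]$, each $T_i$ is self-adjoint and bounded below with compact resolvent, and each $A_{ij}$ is a bounded self-adjoint operator; these are the basic structural assumptions of the abstract theory.

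Next I would identify the determinant operator on $L^2\big((b_1,b_2)\times(b_2,b_3)\big)$: it is multiplication by $A_{21}A_{32}-A_{22}A_{31}=\phi(t_3)-\phi(t_2)$, which is precisely the weight defining $H_1$. Since $\phi$ is strictly increasing, this weight is nonnegative everywhere and strictly positive except at the single corner $t_2=t_3=b_2$, so the problem is right-definite and $H_1$ is the natural Hilbert space for the induced commuting multiparameter operators $\Gamma_1,\Gamma_2$. The abstract theorem then asserts that the joint eigenvectors of $\Gamma_1,\Gamma_2$ form an orthonormal basis of $H_1$, and it remains only to match this abstract system with the functions in the statement: every such eigenvector factors as $u_2(t_2)u_3(t_3)$ with $u_2,u_3$ solving (\ref{5:SL3})--(\ref{5:SL4}),(\ref{5:bc}) and therefore possessing well-defined interior zero counts, while Theorem \ref{5:t1} shows that each $\n\in\N_0^2$ is realized exactly once; after the normalization (\ref{5:norm1}) the abstract basis is exactly $\{u_{2,\n}(t_2)u_{3,\n}(t_3):\n\in\N_0^2\}$. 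The orthonormality is already recorded before the theorem, so completeness is the only new assertion.

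The main obstacle is that the problem is \emph{not} uniformly right-definite: $\phi(t_3)-\phi(t_2)$ is not bounded away from zero on the closed rectangle, so the determinant operator $\Delta_0$ has $0$ in its spectrum even though it is injective, and one cannot simply cite the easier uniformly right-definite completeness theorem. The delicate point is therefore to confirm that \cite[Theorem 6.8.3]{Volkmerbook} still applies in this merely right-definite situation, i.e.\ that the operators $\Gamma_i=\Delta_0^{-1}\Delta_i$, with $\Delta_i$ the cofactor obtained by replacing the $i$-th column of $(A_{jk})$ by $(T_2,T_3)$, are well-defined commuting self-adjoint operators on $H_1$ with discrete spectrum. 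This is where the compactness of the resolvents of $T_2$ and $T_3$ enters, and, should a quantitative estimate be needed to control the joint spectrum, the two-sided bounds of Theorem \ref{5:t2} supply it. Granting these verifications, the abstract theorem yields that the double sequence $u_{2,\n}(t_2)u_{3,\n}(t_3)$, $\n\in\N_0^2$, is an orthonormal basis of $H_1$, which is the assertion.
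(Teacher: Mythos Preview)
Your proposal is correct and takes essentially the same approach as the paper: the paper gives no independent proof but simply cites \cite[Theorem 6.8.3]{Volkmerbook}, and your proposal is precisely a verification of the hypotheses of that abstract completeness theorem, together with the identification of the abstract joint eigenvectors with the indexed family $u_{2,\n}(t_2)u_{3,\n}(t_3)$ via Theorem \ref{5:t1}. Your explicit attention to the failure of uniform right-definiteness at the corner $t_2=t_3=b_2$ matches the paper's own remark preceding Theorem \ref{5:t1}.
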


The normalization (\ref{5:norm1}) leads to a bound on the values of eigenfunctions.

\begin{thm}\label{5:t4}
There is a constant $B>0$ such that, for all $\n\in\N_0^2$ and
all $t_2\in[b_1,b_2]$, $t_3\in[b_2,b_3]$,
\[
|u_{2,\n}(t_2)u_{3,\n}(t_3)|\le B(n_2^2+n_3^2+1).
\]
\end{thm}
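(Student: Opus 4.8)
The plan is to bound each eigenfunction separately in terms of $L^2$-type quantities and then combine. First I would observe that each factor satisfies a one-dimensional equation of the form $u'' + q_\n(t) u = 0$ (for $u_{2,\n}$) or $u'' - q_\n(t) u = 0$ (for $u_{3,\n}$), where $q_\n(t) = \frac{3}{16}\{\phi(t)\}^2 + \lambda_{1,\n}\phi(t) + \lambda_{2,\n}$, with Neumann boundary conditions. By Theorem \ref{5:t2}, $\sup_t |q_\n(t)| \le C(n_2^2 + n_3^2 + 1)$ for some absolute constant $C$, since $\phi$ is bounded (its range is $[a_0,a_3]$) and the eigenvalue estimates control $\lambda_{1,\n}$ and $\lambda_{2,\n}$ linearly in $n_2^2 + n_3^2 + 1$. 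Set $K_\n := C(n_2^2+n_3^2+1)$.

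Next I would prove the elementary one-dimensional lemma: if $u'' \pm q\, u = 0$ on $[a,b]$ with $u'(a) = 0$ (a Neumann endpoint) and $\|q\|_\infty \le K$, then for every $t \in [a,b]$,
\[
u(t)^2 \le \left(1 + (b-a)^2 K\right)\,\frac{1}{b-a}\int_a^b u(\tau)^2\,d\tau
\]
up to an absolute constant, hence $|u(t)| \le c\sqrt{K+1}\,\|u\|_{L^2(a,b)}$ when $b-a$ is fixed. One way to see this: from $u'(a)=0$ one gets $u'(t) = \mp\int_a^t q(\tau)u(\tau)\,d\tau$, so $|u'(t)| \le K \int_a^b |u|$; combined with $|u(t)| \le |u(t_0)| + \int_a^b |u'|$ for a point $t_0$ where $|u(t_0)|$ equals its $L^2$-average (mean value theorem for integrals), and Cauchy--Schwarz, this yields the stated bound. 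Applying this to $u_{2,\n}$ on $[b_1,b_2]$ and to $u_{3,\n}$ on $[b_2,b_3]$ gives
\[
\max_{t_2}|u_{2,\n}(t_2)| \le c\sqrt{K_\n+1}\,\|u_{2,\n}\|_{L^2(b_1,b_2)}, \qquad
\max_{t_3}|u_{3,\n}(t_3)| \le c\sqrt{K_\n+1}\,\|u_{3,\n}\|_{L^2(b_2,b_3)}.
\]

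The remaining step is to extract bounds on $\|u_{2,\n}\|_{L^2}$ and $\|u_{3,\n}\|_{L^2}$ from the joint normalization (\ref{5:norm1}). This is the main obstacle, because the weight $\phi(t_3)-\phi(t_2)$ degenerates at the corner $t_2=t_3=b_2$, so the product normalization does not immediately separate into a product of one-dimensional $L^2$ norms. To handle this I would argue as follows: since $\phi(t_3) - \phi(t_2) \le a_3 - a_1$, the normalization gives a \emph{lower} bound $\|u_{2,\n}\|_{L^2}^2 \|u_{3,\n}\|_{L^2}^2 \ge (a_3-a_1)^{-1}$, but I need an \emph{upper} bound on the product, which instead should come from pairing (\ref{5:norm1}) with the pointwise bounds just derived. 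Concretely, write $M_{2,\n} := \max|u_{2,\n}|$ and $M_{3,\n} := \max|u_{3,\n}|$; then from the displayed inequalities, $\|u_{2,\n}\|_{L^2}^2 \ge M_{2,\n}^2/(c^2(K_\n+1))$ and similarly for $u_{3,\n}$, while the normalization and the crude bound $\phi(t_3)-\phi(t_2) \le a_3-a_1$ together with $M_{2,\n}^2 M_{3,\n}^2$ dominating the integrand on a set of positive measure must be leveraged — the cleaner route is to instead integrate (\ref{5:norm1}) restricted to a fixed subrectangle $[b_1, b_2 - \delta]\times[b_2+\delta, b_3]$ where $\phi(t_3)-\phi(t_2) \ge \epsilon_\delta > 0$, obtaining $\epsilon_\delta \int\int_{[b_1,b_2-\delta]\times[b_2+\delta,b_3]} u_{2,\n}^2 u_{3,\n}^2 \le 1$, i.e. $\|u_{2,\n}\|_{L^2(b_1,b_2-\delta)}^2 \|u_{3,\n}\|_{L^2(b_2+\delta,b_3)}^2 \le 1/\epsilon_\delta$. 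Then one needs the Neumann-endpoint bound in the form that controls the \emph{full} $L^2$ norm on $[b_1,b_2]$ by the $L^2$ norm on the slightly smaller interval $[b_1, b_2-\delta]$ plus $K_\n$; this follows by the same ODE argument applied with the other endpoint, or by a standard propagation-of-smallness estimate for $u''\pm qu=0$. Finally, combining $M_{2,\n}M_{3,\n} \le c^2(K_\n+1)\|u_{2,\n}\|_{L^2}\|u_{3,\n}\|_{L^2}$ with the product $L^2$-bound $\lesssim 1$ yields $M_{2,\n}M_{3,\n} \le B(n_2^2+n_3^2+1)$, and since $|u_{2,\n}(t_2)u_{3,\n}(t_3)| \le M_{2,\n}M_{3,\n}$, this is exactly the claim. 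The delicate point throughout is the corner degeneracy of the weight, which is precisely the ``lack of uniform right-definiteness'' flagged in the text, so I expect the bookkeeping around the parameter $\delta$ (or an equivalent device) to be where the real work lies.
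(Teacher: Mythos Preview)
Your overall strategy---sup bound via $\sqrt{K_\n}\,\|u\|_{L^2}$ for each factor, then control of $\|u_{2,\n}\|_{L^2}\|u_{3,\n}\|_{L^2}$ from the joint normalization---is natural, but the second half does not close as written. The propagation-of-smallness step is the real gap: for solutions of $u''+qu=0$ with $\|q\|_\infty\le K$, the ratio $\|u\|_{L^2(\text{full})}/\|u\|_{L^2(\text{sub})}$ is \emph{not} bounded uniformly in $K$, even with Neumann conditions at both ends. A WKB picture already shows that the amplitude varies like $q^{-1/4}$, so if $Q(\phi(\cdot))$ ranges over values of size $K_\n$ on the interval (and Theorem~\ref{5:t2} shows that $Q$ does vary by order $K_\n$ across $[a_1,a_2]$), the $L^2$ mass can concentrate near an endpoint by a factor that is a genuine power of $K_\n$. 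Any such extra power, once multiplied by the $K_\n$ coming from your pointwise lemma, overshoots the target $O(K_\n)$. Optimizing in $\delta$ does not save you either, because $\phi'(b_2)=0$ forces $\epsilon_\delta\sim\delta^2$, so shrinking $\delta$ is expensive.

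The paper bypasses this entirely. It first uses the scaling freedom in the product $u_{2,\n}u_{3,\n}$ to impose $\|u_{2,\n}\|_{L^2(b_1,b_2)}=1$, which with (\ref{5:norm1}) yields only the \emph{weighted} bound $\int_{b_2}^{b_3}(\phi(t_3)-a_2)u_{3,\n}^2\le 1$; crucially, the full $\|u_{3,\n}\|_{L^2}$ is never controlled (and need not be). Instead, the paper bounds $\int u_{2,\n}'^2$ and $\int u_{3,\n}'^2$ by $CK_\n$ through the energy identities (\ref{5:eq1})--(\ref{5:eq2}); for $u_{3,\n}$ this requires the cross-multiplication trick (multiplying each identity by the other factor's $L^2$ norm and adding, so that the $\lambda_2$ terms combine via (\ref{5:norm1})). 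Finally, Lemma~\ref{5:l1} is tailored exactly to this situation: it bounds $|u(t)|^2$ by $\int_c^d u^2$ on a \emph{sub}-interval plus $\int_a^b u'^2$, so one may take $[c,d]\subset(b_2,b_3)$ where the weight $\phi(t_3)-a_2$ is bounded below and invoke the weighted bound. Your pointwise lemma, by contrast, implicitly estimates $\int u'^2\le K^2\|u\|_{L^2}^2$, which is one power of $K$ too many; the paper's direct energy estimate $\int u'^2\le CK_\n$ is the missing ingredient.
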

\begin{proof}
We abbreviate $u_j=u_{j,\n}$, $\lambda_j=\lambda_{j,\n}$.
Condition (\ref{5:norm1}) is a normalization for the product
$u_2(t_2)u_3(t_3)$ but not for each factor separately, so we may assume
that, additionally,
\begin{equation}\label{5:norm2}
 \int_{b_1}^{b_2} \left\{u_2(t_2)\right\}^2\,dt =1 .
 \end{equation}
Now (\ref{5:norm1}), (\ref{5:norm2}) imply that
\begin{equation}\label{5:norm3}
\int_{b_2}^{b_3} (\phi(t_3)-\phi(b_2))\left\{u_3(t_3)\right\}^2 \,dt_3 \le 1 .
\end{equation}

We multiply equations (\ref{5:SL3}), (\ref{5:SL4}) by $u_2$ and $u_3$,
respectively, and integrate by parts to obtain
\begin{eqnarray}
\int_{b_1}^{b_2} u_2'^2 & = &\frac3{16}\int_{b_1}^{b_2} \phi^2 u_2^2
+\lambda_1 \int_{b_1}^{b_2} \phi u_2^2+\lambda_2
\int_{b_1}^{b_2} u_2^2 ,\label{5:eq1}\\
\int_{b_2}^{b_3} u_3'^2 &=& -\frac3{16}\int_{b_2}^{b_3} \phi^2 u_3^2
-\lambda_1 \int_{b_2}^{b_3} \phi u_3^2-\lambda_2
\int_{b_2}^{b_3} u_3^2 .\label{5:eq2}
\end{eqnarray}
It follows from (\ref{5:norm2}), (\ref{5:eq1}) and Theorem \ref{5:t2} that
there is a constant $B_1>0$ such that, for all $\n\in\N_0^2$,
\begin{equation}\label{5:est3}
\int_{b_1}^{b_2} u_2'^2 \le B_1(n_2^2+n_3^2+1) .
\end{equation}
Unfortunately, we cannot argue the same way for $u_3$ because we do not
have an upper bound for $\int_{b_2}^{b_3} u_3^2$.  Instead, we multiply
(\ref{5:eq1}) by $\int u_3^2$ and (\ref{5:eq2}) by $\int u_2^2$ and add
the equations.  Then, noting (\ref{5:norm1}), we find
\[
\int_{b_1}^{b_2} u_2'^2 \int_{b_2}^{b_3} u_3^2+\int_{b_1}^{b_2} u_2^2 \int_{b_2}^{b_3} u_3'^2\\
\le -\lambda_1 +\frac38\max_{t\in[b_1,b_3]} |\phi(t)| .
\]
Using Theorem \ref{5:t2} and (\ref{5:norm2}), we find a constant
$B_2>0$ such that, for all $\n\in\N_0^2$,
\begin{equation}\label{5:ineq3}
  \int_{b_2}^{b_3} u_3'^2 \le B_2(n_2^2+n_3^2+1).
\end{equation}
We apply the following Lemma \ref{5:l1} (noting (\ref{5:norm2}),
(\ref{5:norm3}), (\ref{5:est3}), (\ref{5:ineq3}))
and obtain the desired result.
\end{proof}

\begin{lemma}\label{5:l1}
Let $u:[a,b]\to\R$ be a continuously differentiable function, and
let $a\le c<d\le b$.
Then, for all $t\in[a,b]$,
\[
(d-c)\left|u(t)\right|^2 \le 2 \int_c^d \left|u(r)\right|^2\,dr
+ 2(b-a)(d-c) \int_a^b \left|u'(r)\right|^2\,dr .\]
\end{lemma}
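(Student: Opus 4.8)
The plan is to bound the pointwise value $|u(t)|$ by the value $|u(s)|$ at an arbitrary reference point $s$ lying in the subinterval $[c,d]$, plus a term controlled by the $L^2$-norm of $u'$, and then to average the resulting inequality over $s\in[c,d]$. This turns a pointwise bound involving an arbitrary base point into the integral bound asserted by the lemma.

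First I would fix $t\in[a,b]$ and $s\in[c,d]$. Since $u$ is continuously differentiable and the closed interval with endpoints $s$ and $t$ is contained in $[a,b]$, the fundamental theorem of calculus gives $u(t)=u(s)+\int_s^t u'(r)\,dr$, whence
\[
|u(t)|\le |u(s)|+\int_a^b |u'(r)|\,dr .
\]
Squaring and using the elementary inequality $(\alpha+\beta)^2\le 2\alpha^2+2\beta^2$ together with Cauchy--Schwarz in the form $\bigl(\int_a^b|u'|\bigr)^2\le (b-a)\int_a^b |u'(r)|^2\,dr$, I obtain
\[
|u(t)|^2\le 2|u(s)|^2+2(b-a)\int_a^b |u'(r)|^2\,dr .
\]

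The final step is to integrate this inequality with respect to $s$ over $[c,d]$. The left-hand side is independent of $s$, so it contributes a factor $d-c$, and the term $\int_c^d|u(s)|^2\,ds$ appears with the constant $2$ exactly as required, giving
\[
(d-c)|u(t)|^2\le 2\int_c^d |u(s)|^2\,ds+2(b-a)(d-c)\int_a^b |u'(r)|^2\,dr ,
\]
which is the claimed estimate. There is essentially no serious obstacle; the only points needing (routine) care are that the interval between $s$ and $t$ genuinely lies inside $[a,b]$, so that $\bigl|\int_s^t u'\bigr|\le\int_a^b|u'|$ irrespective of the order of $s$ and $t$, and that the hypothesis $u\in C^1[a,b]$ legitimizes the fundamental theorem of calculus and guarantees all integrals are finite. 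One could replace the averaging over $s$ by selecting, via the mean value theorem for integrals, a single point $s_0\in[c,d]$ with $|u(s_0)|^2=\frac{1}{d-c}\int_c^d|u|^2$, but integrating over all $s\in[c,d]$ makes even that small appeal unnecessary.
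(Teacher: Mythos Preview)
Your proof is correct and follows essentially the same strategy as the paper's: obtain a pointwise bound $|u(t)|^2\le 2|u(s)|^2+\text{(derivative term)}$ via the fundamental theorem of calculus and Cauchy--Schwarz, then integrate in $s$ over $[c,d]$. The only cosmetic difference is that the paper applies Cauchy--Schwarz on the interval between $s$ and $t$, producing the intermediate factor $|t-s|$ (which is then bounded by $b-a$ upon integration), whereas you bound $\bigl|\int_s^t u'\bigr|$ by $\int_a^b|u'|$ first and apply Cauchy--Schwarz on all of $[a,b]$; both routes land on the same inequality.
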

\begin{proof}
For $s,t\in[a,b]$ we have
\[
|u(t)-u(s)|=\left|\int_s^t u'(r)\,dr\right|\le |t-s|^{1/2}
\left(\int_a^b \left|u'(r)\right|^2\, dr\right)^{1/2} .\]
This implies
\[
|u(t)|^2\le 2|u(s)|^2 + 2|t-s| \int_a^b \left|u'(r)\right|^2\, dr.
\]
We integrate from $s=c$ to $s=d$ and obtain the desired inequality.
\end{proof}

Let $u_{1,\n}$ be the solution of
\begin{equation}\label{5:ode}
u_1''-\left(\frac3{16} \{\phi(t_1)\}^2+\lambda_{1,\n} \phi(t_1)
+\lambda_{2,\n}\right) u_1 =0,\quad b_0\le t_1\le b_1,
\end{equation}
determined by the initial conditions
\[
u_1(b_1)=1,\quad u_1'(b_1)=0 .
\]
The following estimate on $u_{1,\n}$ will be useful in Section \ref{Dirichlet1}.

\begin{thm}\label{5:t5}
We have $u_{1,\n}(t_1)>0$ for all $t_1\in[b_0,b_1]$.
If $0=b_0\le c_1<c_2<b_1$, then there are constants $C>0$ and $0<r<1$ such
that, for all $\n\in\N_0^2$ and $t_1\in[c_2,b_1]$,
\[
\frac{u_{1,\n}(t_1)}{u_{1,\n}(c_1)} \le C r^{n_2+n_3}.
\]
\end{thm}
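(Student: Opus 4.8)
The plan is to analyze the one-dimensional linear ODE (\ref{5:ode}) directly. The coefficient multiplying $u_1$ is $P_\n(t_1):=\frac{3}{16}\{\phi(t_1)\}^2+\lambda_{1,\n}\phi(t_1)+\lambda_{2,\n}=Q(\phi(t_1))$ where $Q$ is the van Vleck polynomial (\ref{5:Q}). Since the exponents at $a_0$ are $0$ and $\frac12$, the solution $u_{1,\n}$ with $u_1(b_1)=1$, $u_1'(b_1)=0$ extends continuously to $[b_0,b_1]$; I would first record that $b_0=0$ in our normalization and that $\phi$ maps $[b_0,b_1]$ onto $[a_0,a_1]$. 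The key structural fact is positivity of the coefficient: by Theorem \ref{5:t2}, for $n_2^2+n_3^2$ large we have $\lambda_{1,\n}\le -A_2(n_2^2+n_3^2)+A_3<0$, and on $[a_0,a_1]$ the quadratic $Q$ is eventually dominated by the linear term $\lambda_{1,\n}\phi(t_1)$, so $Q(\phi(t_1))\le -\kappa(n_2^2+n_3^2)$ for some $\kappa>0$ once $n_2^2+n_3^2$ exceeds a threshold; for the finitely many remaining $\n$ one just needs $Q(\phi(t_1))$ bounded, which it is. (For a clean bound valid for all $\n$, one can instead show $Q(\phi(t_1))\le -\kappa(n_2^2+n_3^2)+K$ uniformly.)

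The first step is positivity: $u_{1,\n}>0$ on $[b_0,b_1]$. Starting from $u_1(b_1)=1>0$, $u_1'(b_1)=0$, and integrating (\ref{5:ode}) leftward, as long as $u_1>0$ we have $u_1''=P_\n u_1$. If $P_\n\ge 0$ throughout (the generic large-$\n$ situation) then $u_1$ is convex wherever positive, so moving left from $b_1$ with zero slope it can only increase — it stays positive, indeed $u_1(t_1)\ge 1$. For the possibly exceptional small-$\n$ cases where $P_\n$ changes sign, one argues by a standard Sturm/Prüfer argument that since $u_1'(b_1)=0$ and the equation is $u_1''=P_\n u_1$, the first zero (going left) cannot occur before... — but actually the cleanest route is: $u_{1,\n}$ is, up to normalization, the restriction to $[b_0,b_1]$ of $w_1(s_1)$, and $w_1(s_1)w_2(s_2)w_3(s_3)$ times $(\rho^2+1)^{-1/2}$ is (Theorem \ref{4:t1}) a harmonic function; positivity of $u_{1,\n}$ on the whole closed interval, including the endpoint $b_0$, follows because any zero of $u_{1,\n}$ in $(b_0,b_1]$ together with $u_1'(b_1)=0$ would, by the oscillation theory underlying Theorem \ref{5:t1}, force $n_2$ or $n_3$ to increase — so I would phrase positivity as: $u_{1,\n}$ has no zero in $(b_0,b_1]$, hence (being $1$ at $b_1$) is positive there, and continuity gives $u_{1,\n}(b_0)\ge 0$; strict positivity at $b_0$ follows since a zero there with the ODE would propagate.

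The second and main step is the exponential decay ratio. Set $v=-u_1'/u_1$ (the logarithmic derivative, legitimate since $u_1>0$). Then $v'=-u_1''/u_1+(u_1'/u_1)^2=-P_\n(t_1)+v^2\ge -P_\n(t_1)\ge \kappa(n_2^2+n_3^2)=:M^2$ for $\n$ past the threshold, with $v(b_1)=0$. Integrating from $t_1$ to $b_1$: $-v(t_1)=v(b_1)-v(t_1)=\int_{t_1}^{b_1}v'\ge M^2(b_1-t_1)$, so $v(t_1)\le -M^2(b_1-t_1)\le 0$, i.e. $(\log u_1)'(t_1)=-v(t_1)\ge M^2(b_1-t_1)\ge 0$, confirming $u_1$ is nondecreasing. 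To get the ratio bound, integrate $(\log u_1)'$ from $c_1$ to any $t_1\in[c_2,b_1]$: since $(\log u_1)'\ge 0$ everywhere, $\log u_{1,\n}(t_1)-\log u_{1,\n}(c_1)\le \log u_{1,\n}(b_1)-\log u_{1,\n}(c_1)=-\int_{c_1}^{b_1}(\log u_1)'\,dt_1\le -\int_{c_1}^{c_2}(\log u_1)'\,dt_1$. On $[c_1,c_2]$ we need a lower bound for $(\log u_1)'$; from $v'\ge M^2$ with $v(b_1)=0$ we get $-v(t_1)\ge M^2(b_1-t_1)\ge M^2(b_1-c_2)$ for $t_1\le c_2$, so $\int_{c_1}^{c_2}(\log u_1)'\ge M^2(b_1-c_2)(c_2-c_1)$. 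Hence $u_{1,\n}(t_1)/u_{1,\n}(c_1)\le \exp(-M^2(b_1-c_2)(c_2-c_1))=\exp(-\kappa(b_1-c_2)(c_2-c_1)(n_2^2+n_3^2))\le C r^{n_2+n_3}$ for suitable $C,r$ (absorbing the threshold exceptions into $C$, and using $n_2^2+n_3^2\ge \frac12(n_2+n_3)^2\ge$ const$\cdot(n_2+n_3)$... in fact $n_2^2+n_3^2\ge$ grows faster than $n_2+n_3$, so any $r\in(0,1)$ works once $C$ is chosen large).

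The main obstacle is the lack of uniform sign-definiteness of $P_\n$ for small $\n$: the decay estimate is genuinely quantitative only once $\lambda_{1,\n}$ is sufficiently negative (Theorem \ref{5:t2}), so I must handle the finitely many small-$\n$ cases separately — there the ratio is simply bounded by a constant (each $u_{1,\n}$ is a fixed continuous positive function on the compact interval, so $u_{1,\n}(t_1)/u_{1,\n}(c_1)\le \max/\min$), and these constants are absorbed into $C$. The second delicate point is establishing strict positivity all the way to $b_0$ despite the regular singular point there; I would dispatch this via the Prüfer/oscillation picture used for Theorem \ref{5:t1}, noting that any zero in $(b_0,b_1]$ would contradict the zero-count characterization of the eigenfunction together with $u_1'(b_1)=0$.
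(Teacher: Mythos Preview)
Your overall strategy---bound the coefficient $Q(\phi(t_1))$ from below and then run a Riccati/logarithmic-derivative comparison---is exactly the paper's route (its Lemma~\ref{5:l2}). But the execution has two genuine gaps.

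\textbf{(A) The lower bound on $Q$ is not obtainable from Theorem~\ref{5:t2} alone.} You argue that since $\lambda_{1,\n}$ is very negative, the linear term $\lambda_{1,\n}\phi(t_1)$ dominates. This fails for two reasons: the sign of $\phi(t_1)=s_1\in[a_0,a_1]$ is completely undetermined (nothing in the paper fixes the signs of the $a_j$), and $|\lambda_{2,\n}|$ is of the \emph{same} order $n_2^2+n_3^2$ as $|\lambda_{1,\n}|$, so it cannot be absorbed. Theorem~\ref{5:t2} by itself gives no control on the sign of $Q(s)$ for $s\in[a_0,a_1]$. The paper's key idea, which you are missing, is to use the \emph{convexity} of $Q$ together with the oscillation-comparison values (\ref{5:compare1})--(\ref{5:compare2}): there exist $s_2^*\in(a_1,a_2)$, $s_3^*\in(a_2,a_3)$ with $Q(s_2^*)=\pi^2 n_2^2/(b_2-b_1)^2\ge 0$ and $Q(s_3^*)=-\pi^2 n_3^2/(b_3-b_2)^2\le 0$. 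Since $Q$ is convex, the secant line $L$ through these two points satisfies $Q(s)\ge L(s)$ for all $s\le s_2^*$, and $L$ is nonincreasing with $L(s_2^*)\ge 0$. This gives $Q(s)\ge 0$ on $[a_0,a_1]$ for \emph{every} $\n$ (so positivity of $u_{1,\n}$ follows at once, with no separate small-$\n$ case needed), and $Q(\phi(t_1))\ge C_1(n_2+n_3)^2$ for $t_1\in[b_0,c_2]$.

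\textbf{(B) The sign in your Riccati step is inverted.} With the correct sign $P_\n=Q(\phi(t_1))\ge 0$, the equation $u_1''=P_\n u_1$ makes $u_1$ convex, hence (with $u_1'(b_1)=0$) \emph{nonincreasing} on $[b_0,b_1]$, not nondecreasing. Your inequality $v'=-P_\n+v^2\ge -P_\n\ge M^2$ requires $P_\n\le -M^2<0$, which contradicts both your own step~1 and the truth. The correct comparison (Lemma~\ref{5:l2}, after reversing the interval so that the initial point is $b_1$) is: with $z=u_1'/u_1$ one has $z'+z^2=P_\n\ge M^2$ on $[c_1,c_2]$ (reversed), whence $z$ dominates $M\tanh(M(\cdot))$ and integration gives $u_{1,\n}(c_1)/u_{1,\n}(t_1)\ge \tfrac12 e^{M(c_2-c_1)}$ for $t_1\in[c_2,b_1]$, with $M=\sqrt{C_1}\,(n_2+n_3)$.
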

\begin{proof}
We abbreviate $u_1=u_{1,\n}$ and $\lambda_j=\lambda_{j,\n}$.
By definition, $u_1$ satisfies the differential equation
\[ u_1''=Q(\phi(t_1))u_1 ,\quad t_1\in[b_0,b_1] ,\]
where $Q$ is given by (\ref{5:Q}).
According to (\ref{5:compare1}), (\ref{5:compare2}), there are
$s_2\in(a_1,a_2)$ and $s_3\in(a_2,a_3)$ such that
\[
Q(s_2)=\frac{\pi^2 n_2^2}{(b_2-b_1)^2},\quad Q(s_3)=-\frac{\pi^2 n_3^2}
{(b_3-b_2)^2}.
\]
If $s\le s_2$ then $Q(s)\ge L(s)$, where $L(s)$ is the linear function
with $L(s_j)=Q(s_j)$, $j=2,3$.
It follows that $Q(s)\ge 0$ for $s\in[a_0,a_1]$ and
\begin{equation}\label{5:ineq4}
 Q(\phi(t_1)) \ge C_1(n_2+n_3)^2 \quad \mbox{\ for\  } t_1\in [b_0,c_2],
\end{equation}
where $C_1$ is a positive constant independent of $\n$.
We now apply the following Lemma \ref{5:l2}
to complete the proof. Note that the interval $[a,b]$ in the lemma
is $[c_1,b_1]$ but with the end points interchanged.
\end{proof}

\begin{lemma}\label{5:l2}
Let $u:[a,b]\to\R$ be a solution of the differential equation
\[ u''(t)=q(t)u(t),\quad t\in[a,b], \]
determined by the initial conditions $u(a)=1$, $u'(a)=0$,
where $q:[a,b]\to\R$ is a continuous function. Suppose that $q(t)\ge 0$ on $[a,b]$
and $q(t)\ge \lambda^2$ on $[c,b]$ for some $\lambda>0$ and $c\in[a,b)$. Then
$u(t)>0$ for all $t\in[a,b]$, and
\[
\frac{u(b)}{u(t)}\ge \frac12 e^{\lambda(b-c)}
\ \mbox{for\ all} \ t\in[0,c].
\]
\end{lemma}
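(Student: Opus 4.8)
The plan is to first establish positivity of $u$ on the whole interval, then bound the logarithmic derivative $u'/u$ from below on $[c,b]$ by comparison with the constant-coefficient equation $v''=\lambda^2 v$, and finally integrate that bound. For positivity: since $u(a)=1>0$, $u'(a)=0$, and $u''=qu$ with $q\ge 0$, as long as $u$ stays positive we have $u''\ge 0$, so $u'$ is nondecreasing, hence $u'\ge u'(a)=0$, hence $u$ is nondecreasing and stays $\ge 1>0$. A standard continuity/connectedness argument (let $t^*$ be the first zero of $u$; derive a contradiction since $u\ge 1$ up to $t^*$) makes this rigorous, giving $u(t)\ge 1>0$ on all of $[a,b]$ and $u$ nondecreasing.

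Next I would control $g:=u'/u$, which is well-defined and continuous on $[a,b]$ by positivity. It satisfies the Riccati equation $g' = u''/u - (u'/u)^2 = q - g^2$. On $[c,b]$ we have $q\ge\lambda^2$. I claim $g(t)\ge \lambda\tanh(\lambda(t-c))$ on $[c,b]$: at $t=c$, $g(c)=u'(c)/u(c)\ge 0$ (from $u'\ge0$) which equals $\lambda\tanh(0)$, and the comparison function $h(t):=\lambda\tanh(\lambda(t-c))$ solves $h'=\lambda^2-h^2\le q-h^2$, so a differential-inequality (Riccati comparison) argument gives $g\ge h$ throughout $[c,b]$. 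Then for $t\le c$, write $\log u(b)-\log u(t) = \int_t^b g \ge \int_c^b g$ (using $g\ge 0$ on $[t,c]\subseteq[a,b]$, which holds since $u'\ge 0$ and $u>0$), and
\[
\int_c^b g \ge \int_c^b \lambda\tanh(\lambda(r-c))\,dr
= \log\cosh(\lambda(b-c)) \ge \log\!\Big(\tfrac12 e^{\lambda(b-c)}\Big),
\]
since $\cosh x \ge \tfrac12 e^{x}$ for $x\ge 0$. Exponentiating yields $u(b)/u(t)\ge \tfrac12 e^{\lambda(b-c)}$ for all $t\in[a,c]$, which is the claimed bound (the statement writes $[0,c]$, but $a=0$ in the application via Theorem \ref{5:t5}).

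The main obstacle is making the Riccati comparison $g\ge h$ fully rigorous without circularity: one must know $g$ exists on all of $[c,b]$ (guaranteed here by the already-proven positivity of $u$, so no blow-up) before invoking the comparison, and then handle the differential inequality $g'-h' \ge -(g^2-h^2) = -(g+h)(g-h)$ by a Grönwall-type argument showing $g-h$ cannot become negative. An alternative that sidesteps Riccati entirely is a direct Sturm-type comparison: on $[c,b]$ compare $u$ with $v(t)=\cosh(\lambda(t-c))$, noting $(u'v-uv')' = (u''v-uv'') = (q-\lambda^2)uv \ge 0$, so $u'v-uv'$ is nondecreasing; evaluating at $c$ gives $u'(c)v(c)-u(c)v'(c)=u'(c)\ge 0$, hence $u'/u \ge v'/v$ on $[c,b]$, i.e. $(\log u)' \ge (\log v)'$, and integrating from $c$ to $b$ gives $u(b)/u(c)\ge v(b)=\cosh(\lambda(b-c))\ge \tfrac12 e^{\lambda(b-c)}$; combined with $u(c)\ge u(t)$ for $t\le c$ this finishes the proof. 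I would present this second route, as it is shorter and avoids any existence subtlety.
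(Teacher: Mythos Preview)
Your proposal is correct, and the Riccati route you outline first is exactly the paper's proof: set $z=u'/u$, use $z'+z^2=q$ to compare with $\lambda\tanh(\lambda(t-c))$ on $[c,b]$, integrate to get $u(b)/u(c)\ge\cosh(\lambda(b-c))\ge\tfrac12 e^{\lambda(b-c)}$, and finish via monotonicity of $u$. The Wronskian/Sturm alternative you prefer is a minor repackaging of the same comparison (it yields $(\log u)'\ge(\log v)'$ directly rather than via Riccati), and your observation that the stated interval $[0,c]$ should read $[a,c]$ is correct.
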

\begin{proof}
Since $q(t)\ge 0$, $u(t)>0$ and $u'(t)\ge 0$ for $t\in[a,b]$.
The function $z=u'/u$ satisfies the Riccati equation
\[ z'+z^2=q(t), \]
and the initial condition $z(a)=0$.
It follows that
\[ z(t)\ge \lambda \tanh(\lambda(t-c))\quad \mbox{\ for\ } t\in[c,b]. \]
Integrating from $t=c$ to $t=b$ gives
\[ \ln \frac{u(b)}{u(c)} \ge \ln \cosh \lambda(b-c)\ge
\ln \frac12 e^{\lambda(b-c)} \]
which yields the claim since $u$ is nondecreasing.
\end{proof}

We now introduce a systematic notation for our eigenvalues and  eigenfunctions.
First of all, we note that the results of this section remain valid for
other sets of boundary conditions.
We will need eight sets of boundary
conditions labeled by $\p=(p_1,p_2,p_3)\in\{0,1\}^3$. These
boundary conditions are
\begin{equation}\label{5:gbc}
\begin{array}{llll}
u_2'(b_1)=0 & \ \mbox{\ if\ } p_1=0,&\quad  u_2(b_1)=0 &\ 
\mbox{\ if\ } p_1=1,\\
u_2'(b_2)=u_3'(b_2)=0 &\ \mbox{\ if\  }p_2=0, &\quad  u_2(b_2)=u_3(b_2)=0
&\ \mbox{\ if\ } p_2=1,\\
u_3'(b_3)=0 &\ \mbox{\ if\ } p_3=0,&\quad u_3(b_3)=0 &\ 
\mbox{\ if\ }p_3=1.
\end{array}
\end{equation}
The initial conditions for $u_1$ are
\begin{equation}
\label{5:ini}
u_1(b_1)=1, u_1'(b_1)=0\,\ \mbox{\ if\ } p_1=0,
\quad u_1(b_1)=0, u_1'(b_1)=1\,
\ \mbox{\ if\ }p_1=1.
\end{equation}
We denote the corresponding eigenvalues
by $(\lambda^{(1)}_{1,\nn},\lambda^{(1)}_{2,\nn})$.
For the notation of eigenfunctions we return to the $s_i$-variable
connected to $t_i$ by $t_i=\Omega(s_i)$.
The eigenfunctions will be denoted by $\E{1}{i}(s_i)=u_{i,\n}(t_i)$, $i=1,2,3$.
The superscript $(1)$ is used to distinguish from eigenvalues and
eigenfunctions introduced in Sections 7 and 9.
The subscript $\n=(n_2,n_3)$ indicates the number of zeros
of $\E{1}{2}(s_2)$, $\E{1}{3}(s_3)$ in $(a_1,a_2)$, $(a_2,a_3)$, respectively.
The subscript $\p$ indicates the boundary conditions used to determine
eigenvalues and eigenfunctions.  By using the letter $E$
for eigenfunctions we follow  B\^ocher \cite{Bocher}.
In our notation we suppressed the dependence of eigenvalues and
eigenfunctions on $a_0,a_1,a_2,a_3$.

Summarizing, for $i=1,2,3$, $\E{1}{i}$ is a solution of
(\ref{4:Fuchs}) on
$(a_{i-1},a_i)$ with $(\lambda_1,\lambda_2)
=(\lambda^{(1)}_{1,\nn},\lambda^{(2)}_{2,\nn})$.
The solution $\E{1}{1}(s_1)$ has exponent $\frac12 p_1$ at $a_1$ and
it has no zeros in $(a_0,a_1)$. The solution $\E{1}{2}(s_2)$
has exponent $\frac12 p_1$ at $a_1$, exponent $\frac12 p_2$ at $a_2$,
and its has $n_2$ zeros in $(a_1,a_2)$.  The solution $\E{1}{3}(s_3)$
has exponent $\frac12 p_2$ at $a_2$, exponent $\frac12 p_3$ at $a_3$,
and it has $n_3$ zeros in $(a_2,a_3)$.

\section{First Dirichlet problem}\label{Dirichlet1}

Consider the coordinate surface (\ref{4:surface}) for fixed $s=d_1\in(a_0,a_1)$.
See Figure \ref{s1const} for a graphical depiction of the shape of this surface.
\begin{figure}[h]
\begin{center}
\includegraphics[height=101.5mm]{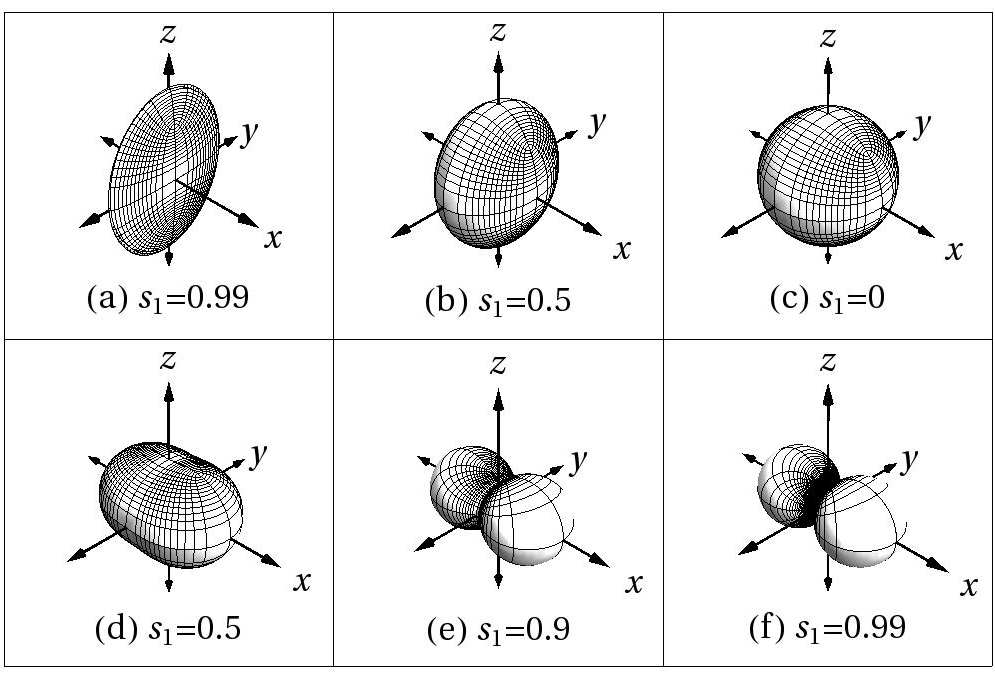}
\end{center}
\caption{Coordinate surfaces $s_1=const$ for $a_i=i$ with (a), (b)
inside $\B$, (d), (e), (f) outside $\B$, and (c) the unit sphere.
\label{s1const} }
\end{figure}
Let $(x',y',z')\in\S^2$. The ray $(x,y,z)=t(x',y',z')$, $t>0$, intersects
the surface if
\begin{equation}
\label{6:surface2}
\frac{(t^2-1)^2}{d_1-a_0}=ct^2 ,
\end{equation}
where
\[
c=\frac{4x'^2}{a_1-d_1}+\frac{4y'^2}{a_2-d_1}+\frac{4z'^2}{a_3-d_1}>0.
\]
Equation (\ref{6:surface2}) has two positive solutions $t=t_1,t_2$ such
that $0<t_1<1<t_2$ and $t_1t_2=1$.  Therefore, the coordinate surface
$s_1=d_1$ consists of two disjoint closed surfaces of genus $0$. One lies
inside the unit ball $\B$ centered at the origin and the other one is the image
of it under the inversion (\ref{4:inversion}).  Let $\D{1}$ be the region
interior to the first surface, that is,
\begin{equation}
\label{6:D1}
\D{1}= \{(x,y,z)\in \B: s_1>d_1\},
\end{equation}
or, equivalently,
\[
\D{1}=\{(x,y,z)\in \B: \frac{(\rho^2-1)^2}{d_1-a_0}
+\frac{4x^2}{d_1-a_1}+\frac{4y^2}{d_1-a_2}+ \frac{4z^2}{d_1-a_3}>0\}.
\]
We showed that $\D{1}$ is star-shaped with respect to the origin.
We now solve the Dirichlet problem for harmonic functions in $\D{1}$ by the
method of separation of variables.

Let $\p=(p_1,p_2,p_3)\in\{0,1\}^3$ and $\n=(n_2,n_3)\in\N_0^2$. Using the
functions $\E{1}{i}$ introduced in Section \ref{SL1} we define
the {\it internal 5-cyclidic harmonic of the first kind}
\begin{equation}\label{6:G}
\G{1}(x,y,z)=(x^2+y^2+z^2+1)^{-1/2}\E{1}{1}(s_1)\E{1}{2}(s_2)\E{1}{3}(s_3)
\end{equation}
for $x,y,z\in \B$ with $x,y,z\ge 0$. We extend this function to $\B$ as
a function of parity $\p$.  We call a function $f$ of parity $\p$ if
\begin{equation}
\label{6:parity}
f(\sigma_i(x,y,z))=(-1)^{p_i} f(x,y,z),\quad \mbox{\ for\ } i=1,2,3
\end{equation}
using the reflections (\ref{4:reflections}).

\begin{lemma}\label{6:l1}
The function $\G{1}$ is harmonic on $\B$.
\end{lemma}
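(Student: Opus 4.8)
The plan is to prove harmonicity of $\G{1}$ locally, in three stages: on the interiors of the eight octants of $\B$, across the coordinate planes, and finally on a thin exceptional set. Throughout I use that the reflections $\sigma_1,\sigma_2,\sigma_3$ are isometries of $\B$ commuting with $\Delta$, and that $\G{1}$ has parity $\p$; hence it suffices to establish harmonicity near points of the closed octant $\{x,y,z\ge 0\}\cap\B$, the values elsewhere being obtained by the parity relations (\ref{6:parity}).

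On the open octant $O:=\{x,y,z>0\}\cap\B$ the function $\G{1}$ is given by formula (\ref{6:G}). Let $\tilde u$ be defined on the set $T$ of (\ref{4:set}) by the same formula (\ref{6:G}); by Theorem \ref{4:t1} with $w_i=\E{1}{i}$ it is harmonic on $T$. Since $\sigma_0$ maps $T$ onto $O$, and since (\ref{4:cubic}) is invariant under $\sigma_0$ up to a nonzero factor so that the five-cyclide coordinates $s_1,s_2,s_3$ are $\sigma_0$-invariant, a short computation (using also that $(x^2+y^2+z^2+1)^{1/2}\G{1}$ is $\sigma_0$-invariant, Section \ref{coord}) shows that the Kelvin transform $(x^2+y^2+z^2)^{-1/2}\,\tilde u(\sigma_0(x,y,z))$ coincides with $\G{1}(x,y,z)$ on $O$. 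The Kelvin transform preserves harmonicity, so $\G{1}$ is harmonic on $O$. (Equivalently, reflect upstairs: the function $U=r^{-1/2}\E{1}{1}(s_1)\E{1}{2}(s_2)\E{1}{3}(s_3)$ from the proof of Theorem \ref{4:t1}, harmonic on the positive cone of $\R^4$, has an even-in-$x_0$ extension harmonic on the half-space $x_0<0$, and Theorem \ref{3:t1} applied to $D=\{x_0<0,\ x_1,x_2,x_3>0\}\cap\S^3$ gives the same conclusion.) By the parity reduction, $\G{1}$ is harmonic on $\B$ minus the three coordinate planes.

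Next I would show that $\G{1}$ extends real-analytically (hence harmonically) across the coordinate planes at every point of $\B$ lying off $A_1\cup A_2$. Fix such a point; by Section \ref{coord} each $s_j$ is then a simple zero of (\ref{4:cubic}) there, hence real-analytic near the point and, since (\ref{4:cubic}) involves only $x^2,y^2,z^2$, even in each of $x,y,z$. If the point lies on $\{x=0\}$, then locally one of the $s_j$ equals $a_1$ throughout $\{x=0\}$, so $a_1-s_j$ is a nonnegative real-analytic function vanishing on $\{x=0\}$ and factors as $x^2 g$ with $g$ real-analytic (positive away from lower-dimensional degeneracies). Because the exponents of (\ref{4:Fuchs}) at $a_1$ are $0$ and $\tfrac12$, a difference not in $\Z$, there are no logarithmic terms, and the relevant factor $\E{1}{j}$ equals $(a_1-\,\cdot\,)^{p_1/2}$ times a function real-analytic and nonvanishing at $a_1$; thus that factor equals $|x|^{p_1}$ times a function real-analytic and even in $x$. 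Doing this for each plane through the point, and noting that all remaining factors — $(x^2+y^2+z^2+1)^{-1/2}$ and the $\E{1}{i}$ evaluated at interior points of their intervals, if any — are real-analytic and even in $x,y,z$, one obtains, on $O$ near the point,
\[
\G{1}(x,y,z)=x^{p_1}y^{p_2}z^{p_3}\,F(x^2,y^2,z^2),
\]
with $F$ real-analytic, the exponent $p_i$ contributing only for the planes actually passing through the point. The right-hand side is real-analytic in $(x,y,z)$ and is the parity-$\p$ extension of $\G{1}$; since $\G{1}$ is harmonic off the coordinate planes by the previous stage, $\Delta\G{1}$ is continuous there and vanishes on a dense set, so $\G{1}$ is harmonic near the point.

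Finally, there remains a closed exceptional set $N\subset\B$, namely the locus where some $s_j$ fails to be a simple zero of (\ref{4:cubic}) — contained in $(A_1\cup A_2)\cap\B$, where $s_1$ and $s_2$ collide — together with the lower-dimensional degeneracy loci from the previous stage. In each case $N$ is a finite union of real-analytic arcs and isolated points, hence has finite one-dimensional Hausdorff measure and therefore zero Newtonian capacity in $\R^3$. The function $\G{1}$ is continuous on $\B$ (Section \ref{coord}; when $p_i=1$ it vanishes on the $\sigma_i$-invariant plane because the corresponding $\E{1}{i}$ has positive exponent there) and bounded, and it is harmonic on $\B\setminus N$ by the two previous stages; the removable-singularity theorem for bounded harmonic functions then yields that $\G{1}$ is harmonic on all of $\B$. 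The crux of the argument is the third stage — pairing the Frobenius exponent $\tfrac12 p_i$ of (\ref{4:Fuchs}) at $a_i$ with the parity component $p_i$ of $\G{1}$ so that the half-integer singularities of the $\E{1}{i}$ cancel against $|x|,|y|,|z|$ and yield a genuinely real-analytic function across each coordinate plane; once this is achieved, the collision set $A_1\cup A_2$, where $\G{1}$ is in general only Hölder continuous, is too thin to obstruct harmonicity.
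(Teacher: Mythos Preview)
Your proof is correct. The paper itself explicitly mentions your Stage-3 route---``one may refer to a classical result on `singular curves' of harmonic functions; see Kellogg''---but then elects to ``argue more directly''.

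The substantive difference is how the collision set $(A_1\cup A_2)\cap\B$ is handled. You invoke removable singularities: $\G{1}$ is continuous on $\B$ (Section~\ref{coord}), harmonic off the set by your first two stages, and smooth arcs in $\R^3$ are polar. The paper instead proves real-analyticity of $\G{1}$ at points of $A_1$ (and similarly $A_2$) by a symmetric-function trick, its Lemma~\ref{6:l2}: since $\E{1}{1}$ and $\E{1}{2}$ share the exponent $\tfrac12 p_1$ at $a_1$, they are restrictions of a single analytic germ $w$, so $\E{1}{1}(s_1)\E{1}{2}(s_2)=w(s_1)w(s_2)$ is symmetric in $(s_1,s_2)$ and hence an analytic function of the elementary symmetric polynomials $(s_1-a_1)+(s_2-a_1)$ and $(s_1-a_1)(s_2-a_1)$, which remain analytic in $(x,y,z)$ even where the two roots collide. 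The paper's argument is self-contained and gives the stronger conclusion of analyticity on all of $\B$; yours is shorter but imports a potential-theoretic black box. One small caveat: the implication ``finite $\mathcal H^1$-measure $\Rightarrow$ zero Newtonian capacity in $\R^3$'' is not a theorem in that generality; what you need, and what is classical, is that smooth arcs in $\R^3$ are polar.

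Your Stages~1 and~2 are essentially the paper's treatment of $\B\setminus(A_1\cup A_2)$, with the same exponent/parity pairing; if anything you are more explicit, supplying the Kelvin-transform (equivalently, the $x_0\mapsto -x_0$ reflection upstairs) that carries harmonicity from $T$ into the open octant of $\B$, a step the paper absorbs into its citation of Theorem~\ref{4:t1}.
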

\begin{proof}
By Theorem \ref{4:t1}, $\G{1}$ is harmonic on $\B$ away from the coordinate planes.
Therefore, it is enough to show that $\G{1}$ is analytic on $\B$.

Consider first $\p=(0,0,0)$. Then (\ref{6:G}) holds on $\B$.
Since $s_1\ne a_0$ on $\B$, $\G{1}$ is analytic on
$\B\setminus(A_1\cup A_2)$ as a composition of analytic functions.
In order to show that $\G{1}$ is also analytic at the points of
$A_1 \cup A_2$, one may refer to a classical result on ``singular curves'' of
harmonic functions; see Kellogg (1967) \cite[Theorem XIII, page 271]{Kellogg} but we
will argue more directly.  Since $A_1$ and $A_2$ are disjoint sets, it is
clear that $\E{1}{3}(s_3)$ is analytic at every point in $\B\cap A_1$.
In order to show that $\E{1}{1}(s_1)\E{1}{2}(s_2)$ is analytic at $(x',y',z')\in\B\cap A_1$, we argue as follows.
We may assume that there is an analytic function $w:(a_1,a_3)\to\R$ such
that $\E{1}{1}$ and $\E{1}{2}$ are restrictions of this function to
$(a_1,a_2)$ and $(a_2,a_3)$, respectively.  Now $(s_1-a_1)+(s_2-a_1)$ and $(s_1-a_1)(s_2-a_1)$ are analytic functions of $(x,y,z)$
in a neighborhood of $(x',y',z')$.
Lemma \ref{6:l2} implies that
$\E{1}{1}(s_1)\E{1}{2}(s_2)$ as a function of $(x,y,z)$ is analytic
at $(x',y',z')$. It follows that $\G{1}$ is analytic at
every point in $\B\cap A_1$.  In the same way, we show that $\G{1}$ is
analytic at every point in $\B \cap A_2$.

If $\p=(0,0,1)$ then we introduce the function
\[
\chi:=
\begin{array}{ll}
\sqrt{a_3-s_3} & \mbox{\ if\ } z\ge 0\\[0.1cm]
-\sqrt{a_3-s_3} & \mbox{otherwise}. 
\end{array}
\]
It follows from (\ref{4:system12a}), (\ref{4:system12b}) that $\chi$ is analytic on $\R^3\setminus A_2$.
Then
\[
\G{1}(x,y,z)=(x^2+y^2+z^2+1)^{-1/2}\E{1}{1}(s_1)\E{1}{2}(s_2)
\chi(x,y,z)w_3(s_3) \]
on $\B$, where $w_3$ is analytic at $s_3=a_3$.  We then argue as above.

The other parity vectors $\p$ are treated similarly.
\end{proof}

\begin{lemma}\label{6:l2}
Let $f:(B_\epsilon)^2\to\C$, $B_\epsilon=\{s\in\C: |s|<\epsilon\}$, be an analytic function which is symmetric: $f(s,t)=f(t,s)$.
Let $g,h:(B_\delta)^3\to B_\epsilon$ be functions such that $g+h$ and $gh$ are analytic. Then the function $f(g(x,y,z),h(x,y,z))$ is analytic on $(B_\delta)^3$.
\end{lemma}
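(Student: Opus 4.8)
The plan is to reduce the claim to the elementary fact that a symmetric analytic function of two variables is an analytic function of the elementary symmetric polynomials, i.e. of the sum and the product. First I would invoke this fundamental symmetric function theorem in its analytic form: if $f(s,t)$ is analytic on a bidisc and symmetric, then there is an analytic function $F(p,q)$ on a suitable neighborhood of the origin in $\C^2$ such that $f(s,t)=F(s+t,st)$ identically. One obtains $F$ by expanding $f$ in its Taylor series at the origin, grouping the monomials $s^mt^n+s^nt^m$ (and the diagonal terms $s^mt^m$), and checking that each symmetrized block is a polynomial in $s+t$ and $st$; the resulting series in $(p,q)$ converges on a neighborhood of $(0,0)$ because the original series converges on $(B_\epsilon)^2$ and the substitution $s,t\mapsto s+t,st$ is continuous and maps a neighborhood of the origin into $(B_\epsilon)^2$.

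Next I would substitute $s=g(x,y,z)$, $t=h(x,y,z)$ into the identity $f(s,t)=F(s+t,st)$, obtaining
\[
f(g(x,y,z),h(x,y,z))=F\bigl(g(x,y,z)+h(x,y,z),\,g(x,y,z)h(x,y,z)\bigr)
\]
on $(B_\delta)^3$. By hypothesis the two arguments $g+h$ and $gh$ are analytic functions on $(B_\delta)^3$; shrinking $\delta$ if necessary so that the pair $(g+h,gh)$ takes values in the neighborhood of $(0,0)$ on which $F$ is analytic, the right-hand side is a composition of analytic maps, hence analytic on $(B_\delta)^3$. This is exactly the assertion of the lemma.

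The one point requiring a little care—the main obstacle, such as it is—is the analytic version of the symmetric function theorem, since we need not just a formal rearrangement of the power series but genuine convergence of the rearranged series $F(p,q)$ on an open set. I would handle this by an explicit majorant estimate: the symmetrization map sends the closed polydisc $\{|p|\le\rho,\ |q|\le\rho^2\}$ into $(B_\epsilon)^2$ for $\rho$ small, and the absolute convergence of the Taylor series of $f$ on $(B_\epsilon)^2$ gives a uniform bound on the partial sums of $F$ there, whence $F$ extends to an analytic function on the interior of that polydisc by Abel's lemma. One could alternatively appeal to the elementary theory of the Newton–Girard relations to see directly that each power sum, hence each symmetric polynomial block, is a polynomial in $p=s+t$ and $q=st$ with integer coefficients, which makes the rearrangement purely algebraic and reduces convergence to the majorant argument just sketched. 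Either route is short and standard, and with it in hand the lemma follows immediately from the composition of analytic functions.
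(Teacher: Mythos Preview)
Your approach is correct and is the natural one: write $f(s,t)=F(s+t,st)$ with $F$ analytic via the analytic symmetric-function theorem, then observe that $F(g+h,gh)$ is a composition of analytic maps. The paper in fact states this lemma without proof, treating it as an elementary auxiliary fact, so there is no ``paper's own proof'' to compare against; your argument fills that gap cleanly.

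One small remark: the step ``shrinking $\delta$ if necessary'' is not actually needed, and as written it slightly undershoots the stated conclusion (analyticity on the full $(B_\delta)^3$). The point is that $F$ can be taken analytic on the entire image $\sigma\bigl((B_\epsilon)^2\bigr)$ of the map $\sigma(s,t)=(s+t,st)$, not merely on a neighborhood of the origin: away from the diagonal $s=t$ the map $\sigma$ is a local biholomorphism and one sets $F=f\circ\sigma^{-1}$; on the diagonal the power-series/Newton--Girard argument you sketch gives local analyticity; and symmetry of $f$ makes the local definitions match. Since $(g,h)$ takes values in $(B_\epsilon)^2$, the pair $(g+h,gh)$ already lands in the domain of $F$, and no shrinking is required. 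For the application in the paper (local analyticity near points of $A_1\cup A_2$) your version would suffice anyway, but this sharpening gives the lemma exactly as stated.
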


Substituting $t_j=\Omega(s_j)$, $j=2,3$, the Hilbert space $H_1$ from
Section~\ref{SL1}
transforms to the Hilbert space $\tilde{H}_1$ consisting of measurable
functions $g:(a_1,a_2)\times (a_2,a_3)\to\C$ for which
\begin{equation}
\label{6:Hilbert}
 \|g\|^2:= \int_{a_2}^{a_3}\int_{a_1}^{a_2} \frac{s_3-s_2}
 {\omega(s_2)\omega(s_3)} |g(s_2,s_3)|^2\,ds_2\,ds_3 <\infty.
\end{equation}
By Theorem \ref{5:t3}, for $g\in \tilde{H}_1$ and fixed $\p$, we have the
Fourier expansion
\begin{equation}\label{6:Fourier}
g(s_2,s_3)\sim \sum_\n c_{\nn} \E{1}{2}(s_2)\E{1}{3}(s_3),
\end{equation}
where the Fourier coefficients are given by
\begin{equation}\label{6:c1}
 c_{\nn}= \int_{a_2}^{a_3}\int_{a_1}^{a_2} \frac{s_3-s_2}{\omega(s_2)\omega(s_3)} g(s_2,s_3) \E{1}{2}(s_2)\E{1}{3}(s_3)\,ds_2\,ds_3 .
\end{equation}

\begin{thm}\label{6:t1}
Consider the region $\D{1}$ defined by (\ref{6:D1}) for some fixed
$d_1\in(a_0,a_1)$.  Let $e$ be a function defined on its boundary
$\partial \D{1}$ of parity $\p\in\{0,1\}^3$, and let $g(s_2,s_3)$
be the representation of
\begin{equation}\label{6:ef}
 f(x,y,z):=(x^2+y^2+z^2+1)^{1/2}e(x,y,z)
\end{equation}
in 5-cyclide coordinates for $(x,y,z)\in\partial D_1$ with $x,y,z>0$.
Suppose $g\in\tilde H_1$ and expand $g$ in the series (\ref{6:Fourier}).
Then the function $u(x,y,z)$ given by
\begin{equation}\label{6:expansion}
 u(x,y,z) =\sum_\n \frac{c_{\nn}}{\E{1}{1}(d_1)} \G{1}(x,y,z)
\end{equation}
is harmonic in $\D{1}$ and assumes the values $e$ on the boundary of
$\D{1}$ in the $L^2$-sense explained below.
\end{thm}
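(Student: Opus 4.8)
The plan is to establish the two assertions of the theorem separately: that the series (\ref{6:expansion}) converges locally uniformly on $\D{1}$ and hence defines a harmonic function there, and that the traces of $u$ on the coordinate surfaces $s_1=d$, multiplied by $(x^2+y^2+z^2+1)^{1/2}$, converge to $g$ in $\tilde H_1$ as $d\downarrow d_1$, which is the precise meaning of assuming the data $e$ in the $L^2$-sense.

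For harmonicity, recall from Lemma \ref{6:l1} that each $\G{1}$ is harmonic on $\B$, so it suffices to prove that (\ref{6:expansion}) converges uniformly on every compact $K\subset\D{1}$, a locally uniform limit of harmonic functions being harmonic. Since $s_1$ is continuous with $s_1>d_1$ on $\D{1}$ and $s_1\le a_1$ everywhere, compactness gives $c_2\in(c_1,b_1)$, with $c_1:=\Omega(d_1)$, such that $\Omega(s_1)\ge c_2$ on $K$. Theorem \ref{5:t5}, together with its analogue for each parity vector $\p$, then furnishes constants $C>0$ and $0<r<1$, depending on $K$ but not on $\n$, such that for all $(x,y,z)\in K$
\[
\frac{|\E{1}{1}(s_1)|}{|\E{1}{1}(d_1)|}\le C\,r^{n_2+n_3}.
\]
Combining this with $|\E{1}{2}(s_2)\E{1}{3}(s_3)|\le B(n_2^2+n_3^2+1)$ (Theorem \ref{5:t4}), with $|c_\nn|\le\|g\|$ (Bessel's inequality for the orthonormal system from Theorem \ref{5:t3}), and with $(x^2+y^2+z^2+1)^{-1/2}\le1$, we obtain for all $(x,y,z)\in K$
\[
\left|\frac{c_\nn}{\E{1}{1}(d_1)}\,\G{1}(x,y,z)\right|\le\|g\|\,B\,C\,(n_2^2+n_3^2+1)\,r^{n_2+n_3}.
\]
Since $\sum_\n(n_2^2+n_3^2+1)r^{n_2+n_3}<\infty$, the series converges uniformly on $K$, so $u$ is harmonic in $\D{1}$.

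For the boundary values, fix $d\in(d_1,a_1)$. The part of the coordinate surface $s_1=d$ lying inside $\B$ in the positive octant is a compact subset of $\D{1}$, parametrised bijectively by $(s_2,s_3)\in(a_1,a_2)\times(a_2,a_3)$ via (\ref{4:system12a})--(\ref{4:system12b}). Restricting the uniformly convergent series (\ref{6:expansion}) term by term to this surface, the representation $g_d$ of $(x^2+y^2+z^2+1)^{1/2}u$ there is
\[
g_d(s_2,s_3)=\sum_\n c_\nn\,\gamma_\nn(d)\,\E{1}{2}(s_2)\E{1}{3}(s_3),\qquad
\gamma_\nn(d):=\frac{\E{1}{1}(d)}{\E{1}{1}(d_1)},
\]
the series converging uniformly in $(s_2,s_3)$, hence in $\tilde H_1$. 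Now $\E{1}{1}$ solves $w''=Q(\phi(t_1))w$ with $Q\ge0$ on $[a_0,a_1]$, as shown in the proof of Theorem \ref{5:t5} for every $\p$, so it is convex; since it has no zeros in $(a_0,a_1)$, fixing its sign makes it positive and nonincreasing there. Hence $0<\gamma_\nn(d)\le1$ for all $\n$, and $\gamma_\nn(d)\to1$ as $d\downarrow d_1$ by continuity of $\E{1}{1}$. By Theorem \ref{5:t3} the Fourier series (\ref{6:Fourier}) of $g$ converges to $g$ in $\tilde H_1$, so by Parseval
\[
\|g_d-g\|^2=\sum_\n|c_\nn|^2\,(\gamma_\nn(d)-1)^2.
\]
Each summand tends to $0$ as $d\downarrow d_1$ and is dominated by the summable sequence $|c_\nn|^2$, so dominated convergence gives $\|g_d-g\|\to0$; since both $u$ and $e$ have parity $\p$, this is precisely the statement that $u$ attains the data $e$ on $\partial\D{1}$ in the $L^2$-sense.

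The one substantial ingredient is Theorem \ref{5:t5}: the uniform-in-$\n$ geometric decay of $\E{1}{1}(s_1)/\E{1}{1}(d_1)$ away from $s_1=d_1$ is what makes the general term of (\ref{6:expansion}) summable, whereas the eigenfunction bounds of Theorem \ref{5:t4} by themselves leave it only polynomially controlled. Everything else is routine bookkeeping: the change of variables $t_i=\Omega(s_i)$, the observation that the sets $\{s_1\ge d'\}\cap\overline{\B}$ are compact subsets of $\D{1}$ that exhaust it, the fact that the relevant surface patch together with its closure lies in $\D{1}$, and the positivity and monotonicity of $\E{1}{1}$ for all eight parity vectors $\p$, which again rest on $Q\ge0$ on $[a_0,a_1]$.
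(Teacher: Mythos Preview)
Your proof is correct and follows essentially the same route as the paper: the uniform estimate on compact subsets combines Theorem~\ref{5:t4} (polynomial bound on $\E{1}{2}\E{1}{3}$) with the geometric decay of $\E{1}{1}(s_1)/\E{1}{1}(d_1)$ from Theorem~\ref{5:t5}, and the $L^2$ boundary attainment is obtained from Parseval together with $0<\gamma_\nn(d)\le1$ and dominated convergence. You have filled in details the paper leaves implicit---the Bessel bound on $c_\nn$, the monotonicity of $\E{1}{1}$ via $Q\ge0$ on $[a_0,a_1]$, and the dominated-convergence step---but the architecture is identical.
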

\begin{proof}
Let $d_1<d<a_1$ and $s_1\in[d,a_1]$. Using Theorems \ref{5:t4}, \ref{5:t5}
we estimate
\[
\left|c_{\nn}\frac{\E{1}{1}(s_1)}{\E{1}{1}(d_1)}
\E{1}{2}(s_2)\E{1}{3}(s_3)\right|\le |c_{\nn}|
Cr^{n_2+n_3} B(n_2^2+n_3^2+1) ,\]
where the constants $B,C>0$ and $r\in(0,1)$ are independent
of $\n$ and $s_1\in[d,a_1]$, $s_2\in[a_1,a_2]$, $s_3\in[a_2,a_3]$.
Since $c_{\nn}$ is a bounded double sequence, this proves that the
series in (\ref{6:expansion}) is absolutely and uniformly convergent
on compact subsets of $\D{1}$. Consequently, by Lemma \ref{6:l1}, $u(x,y,z)$ is harmonic
in $\D{1}$.  If we consider $u$ for fixed $s_1\in(d_1,a_1)$ and compute
the norm $\|u-e\|$ in the Hilbert space $\tilde H_1$ by the Parseval
equality, we obtain
\[
\|u-e\|^2\le \sum_\n |c_{\nn}|^2
\left(1-\frac{\E{1}{1}(s_1)}{\E{1}{1}(d_1)}\right)^2 .
\]
It is easy to see that the right-hand side converges to $0$ as $s_1\to d_1$.
Taking into account that
$e$ and $u$ have the same parity, it follows that $u$ assumes the boundary
values $e$ in this $L^2$-sense.
\end{proof}

If $e$ is a function on $\partial \D{1}$ without parity, we write the
function $f$ from (\ref{6:ef}) as a sum of eight functions
\[
f=\sum_\p f_\p,
\]
where $f_\p$ is of parity $\p$. Then the solution of the corresponding
Dirichlet problem is given by
\begin{equation}\label{6:expansion2}
 u(x,y,z)=\sum_\nn \frac{c_{\nn}}{\E{1}{1}(d_1)} \G{1}(x,y,z),
\end{equation}
where
\begin{equation}\label{6:c2}
 c_\nn=\int_{a_2}^{a_3}\int_{a_1}^{a_2} \frac{s_3-s_2}
 {\omega(s_2)\omega(s_3)} g_\p(s_2,s_3) \E{1}{2}(s_2)\E{1}{3}(s_3)\,ds_2\,ds_3
\end{equation}
and $g_\p(s_2,s_3)$ is the representation of $f_\p$ in 5-cyclide coordinates.

We may write the coefficient $c_\nn$ as an integral over the
surface $\partial \D{1}$ itself. The surface element is
$dS = h_2 h_3 \,ds_2\,ds_3$ with the scale factors $h_2, h_3$ given
in (\ref{4:h2}), (\ref{4:h3}).
Using
\[
\frac{h_2h_3}{h_1}=\frac14(x^2+y^2+z^2+1)\frac{\omega(s_1)}
{\omega(s_2)\omega(s_3)} (s_3-s_2),
\]
we obtain from (\ref{6:c2})
\begin{equation}\label{6:c3}
c_\nn=\frac{1}{2\omega(d_1) \E{1}{1}(d_1)} \int_{\partial \D{1}}
\frac{e}{h_1} \G{1} \, dS ,
\end{equation}
where
\[
h_1^2=\frac{1}{16}\left(\frac{(x^2+y^2+z^2-1)^2}{(d_1-a_0)^2}
+\frac{4x^2}{(d_1-a_1)^2}+\frac{4y^2}{(d_1-a_2)^2}
+\frac{4z^2}{(d_1-a_3)^2} \right) .\]

\section{Second two-parameter Sturm-Liouville problem}\label{SL2}

We treat the two-parameter eigenvalue problem that appears when we wish to solve the Dirichlet problem in ring cyclides.
It is quite similar to the one considered in
Section \ref{SL1}, however, there are also some interesting
differences.  Consider equation (\ref{4:Fuchs}) on the intervals $(a_0,a_1)$
and $(a_2,a_3)$.  We obtain two Sturm-Liouville  equations involving two
parameters
\begin{eqnarray}
&&(\omega(s_1)w_1')'-\frac{1}{\omega(s_1)}\left(\frac3{16}s_1^2
+\lambda_1 s_1+\lambda_2\right) w_1 =0,\quad a_0<s_1<a_1,\label{7:SL1}\\
&&(\omega(s_3)w_3')'-\frac{1}{\omega(s_3)}\left(\frac3{16}s_3^2+\lambda_1
s_3+\lambda_2\right) w_3 =0,\quad a_2<s_3<a_3.\label{7:SL2}
\end{eqnarray}
We again simplify by substituting $t_j=\Omega(s_j)$, $u_j(t_j)=w_j(s_j)$.
Then (\ref{7:SL1}), (\ref{7:SL2}) become
\begin{eqnarray}
 u_1''-\left(\frac3{16} \{\phi(t_1)\}^2+\lambda_1 \phi(t_1)
 +\lambda_2\right) u_1 &=&0,\quad b_0\le t_1\le b_1 ,\label{7:SL3}\\
 u_3''-\left(\frac3{16} \{\phi(t_3)\}^2+\lambda_1 \phi(t_3)
 +\lambda_2\right) u_3 &=&0,\quad b_2\le t_3\le b_3 .\label{7:SL4}
\end{eqnarray}
We add boundary conditions
\begin{equation}\label{7:bc}
 u_1'(b_0)=u_1'(b_1)=u_3'(b_2)=u_3'(b_3)=0 .
\end{equation}
Differential equations (\ref{7:SL3}), (\ref{7:SL4}) together with boundary
conditions (\ref{7:bc}) pose a two-parameter Sturm-Liouville
eigenvalue problem.
In contrast to Section \ref{SL1}, we now have a uniformly
right-definite problem:
\[
-
\left|
\begin{array}{cc} 
\phi(t_1) & 1 \\ \phi(t_3) & 1 
\end{array}
\right|
=\phi(t_3)-\phi(t_1)\ge a_2-a_1>0 \quad \mbox{\ for\ }b_0\le t_1
\le b_1\le t_3 \le b_3.
\]

We again have Klein's oscillation theorem.

\begin{thm}\label{7:t1}
For every $\n=(n_1,n_3)\in\N_0^2$, there exists a uniquely determined
eigenvalue $(\lambda_{1,\n}, \lambda_{2,\n})\in\R^2$
admitting an eigenfunction
$u_1$ with exactly $n_1$ zeros in $(b_0,b_1)$ and an
eigenfunction $u_3$ with exactly $n_3$ zeros in $(b_2,b_3)$.
\end{thm}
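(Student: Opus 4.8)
The plan is to read (\ref{7:SL3})--(\ref{7:SL4}) together with the Neumann conditions (\ref{7:bc}) as a two-parameter Sturm--Liouville eigenvalue problem in the classical normal form
\[
u_k'' + \left(q_k(t_k) + \lambda_1 r_{k1}(t_k) + \lambda_2 r_{k2}(t_k)\right) u_k = 0, \qquad k \in \{1,3\},
\]
on the two compact intervals $[b_0,b_1]$ and $[b_2,b_3]$, and then to invoke the Klein oscillation theorem for such problems exactly as was done for Theorem~\ref{5:t1}; see \cite[Theorem 5.5.1]{AtkMing11} and \cite{Volkmerbook}. Here $q_k(t) = -\frac{3}{16}\{\phi(t)\}^2$, $r_{k1}(t) = -\phi(t)$ and $r_{k2}(t) = -1$; since $\phi$ is the absolutely continuous inverse of the elliptic integral $\Omega$ of (\ref{5:st}), all three coefficient functions are continuous on the relevant closed intervals, and the boundary conditions (\ref{7:bc}) are separated and of Neumann type, so every hypothesis of the abstract theorem except a definiteness condition is immediate.

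The one structural hypothesis that must be checked is \emph{uniform right-definiteness}, i.e.\ that the $2\times 2$ determinant built from the rows $(r_{k1},r_{k2})$ stays bounded away from zero; but this is precisely the inequality already displayed just before the statement, namely
\[
\phi(t_3) - \phi(t_1) \ge \phi(b_2) - \phi(b_1) = a_2 - a_1 > 0
\qquad (b_0 \le t_1 \le b_1,\; b_2 \le t_3 \le b_3),
\]
which holds because $\phi$ is strictly increasing and the intervals $[b_0,b_1]$ and $[b_2,b_3]$ are disjoint. This is genuinely stronger than the situation of Section~\ref{SL1}, where the analogous determinant degenerated at the common endpoint $t_2=t_3=b_2$: here it is bounded below on the whole closed rectangle $[b_0,b_1]\times[b_2,b_3]$, so the full strength of the classical Klein oscillation theorem applies directly, without any of the extra arguments that the merely (non-uniform) right-definiteness forced upon us in Section~\ref{SL1}.

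Granting this, the theorem produces, for each prescribed $\n=(n_1,n_3)\in\N_0^2$, a unique pair $(\lambda_{1,\n},\lambda_{2,\n})\in\R^2$ for which (\ref{7:SL3}) has a nontrivial solution $u_1$ with $u_1'(b_0)=u_1'(b_1)=0$ and exactly $n_1$ zeros in $(b_0,b_1)$, while (\ref{7:SL4}) has a nontrivial solution $u_3$ with $u_3'(b_2)=u_3'(b_3)=0$ and exactly $n_3$ zeros in $(b_2,b_3)$. I do not anticipate a genuine obstacle; the only point that needs a little care is the bookkeeping of signs, i.e.\ making sure that once the system is written in normal form the orientation of the $(\lambda_1,\lambda_2)$-plane is chosen so that the abstract oscillation multi-index coincides with the actual zero counts $n_1,n_3$ in $(b_0,b_1)$ and $(b_2,b_3)$ — this is settled by the same Sturm comparison with $u''+\lambda u=0$ under Neumann conditions used in the proof of Theorem~\ref{5:t2}. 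After this the development parallels Section~\ref{SL1}: one obtains the associated right-definite Hilbert space with weight $\phi(t_3)-\phi(t_1)$, the orthogonality and completeness of the product eigenfunctions $u_{1,\n}(t_1)u_{3,\n}(t_3)$, and the analogue of the pointwise bound in Theorem~\ref{5:t4}, all of which are needed for the Dirichlet problem on the ring cyclides.
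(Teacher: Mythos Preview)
Your proposal is correct and follows the same approach as the paper: the paper does not supply a separate proof for Theorem~\ref{7:t1} but simply appeals to Klein's oscillation theorem (the reference \cite[Theorem~5.5.1]{AtkMing11} already cited for Theorem~\ref{5:t1}), noting that the uniform right-definiteness displayed just before the statement makes the application immediate. Your verification of the hypotheses is accurate and in fact more detailed than what the paper provides; the closing remarks about the Hilbert space, completeness, and pointwise bounds are extraneous to Theorem~\ref{7:t1} itself but correctly anticipate Theorems~\ref{7:t3} and~\ref{7:t4}.
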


We state a result on the distribution of eigenvalues.

\begin{thm}\label{7:t2}
There are constants $A_1,A_2,A_3>0$ such that, for all $\n\in\N_0^2$,
\begin{eqnarray}\label{7:ineq}
&-A_1(n_3^2+1)\le \lambda_{1,\n}\le A_2(n_1^2+1),& \label{7:est1}\\
&|\lambda_{2,\n}|\le A_3(n_1^2+n_3^2+1).&\label{7:est2}
\end{eqnarray}
\end{thm}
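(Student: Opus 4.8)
The plan is to follow the proof of Theorem~\ref{5:t2} almost line for line, the one simplification being that the present problem is uniformly right-definite, which makes the step that was awkward there completely routine here. First I would reuse the elementary comparison fact established in the proof of Theorem~\ref{5:t2}: if $u''+q(t)u=0$ with continuous $q$ has a solution satisfying vanishing-derivative (Neumann) conditions at both endpoints of an interval of length $\ell$ and having exactly $m$ interior zeros, then $q$ attains the value $\pi^2 m^2/\ell^2$ at some interior point. Both of equations (\ref{7:SL3}) and (\ref{7:SL4}) have the form $u''-Q(\phi(t))u=0$, where $Q$ is the van Vleck polynomial (\ref{5:Q}) with $\lambda_j=\lambda_{j,\n}$. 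Applying the comparison fact to the eigenfunctions $u_1$ (with $n_1$ zeros) and $u_3$ (with $n_3$ zeros), one obtains points $\sigma_1:=\phi(t_1)\in(a_0,a_1)$ and $\sigma_3:=\phi(t_3)\in(a_2,a_3)$ with
\[
Q(\sigma_1)=-\frac{\pi^2 n_1^2}{(b_1-b_0)^2},\qquad
Q(\sigma_3)=-\frac{\pi^2 n_3^2}{(b_3-b_2)^2}.
\]

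Next, because the problem is uniformly right-definite (as recorded just before the theorem), $\sigma_3-\sigma_1\ge a_2-a_1>0$ with no smallness possible; this is the only place the uniform lower bound is used, and it is exactly what the analogous step in Theorem~\ref{5:t2} lacked. Subtracting the two relations above, dividing by $\sigma_3-\sigma_1$, and solving for $\lambda_1$ gives
\[
\lambda_{1,\n}=-\frac{3}{16}(\sigma_1+\sigma_3)
+\frac{1}{\sigma_3-\sigma_1}\left(\frac{\pi^2 n_1^2}{(b_1-b_0)^2}-\frac{\pi^2 n_3^2}{(b_3-b_2)^2}\right).
\]
Since $\sigma_1,\sigma_3$ stay in the fixed interval $[a_0,a_3]$ and the denominator is bounded below by $a_2-a_1$, discarding the nonnegative $n_3^2$ term yields $\lambda_{1,\n}\le A_2(n_1^2+1)$, and discarding the nonnegative $n_1^2$ term yields $\lambda_{1,\n}\ge -A_1(n_3^2+1)$; this is (\ref{7:est1}). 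The two-sided bound then gives $|\lambda_{1,\n}|\le(A_1+A_2)(n_1^2+n_3^2+1)$, and solving the first relation for $\lambda_2$,
\[
\lambda_{2,\n}=-\frac{\pi^2 n_1^2}{(b_1-b_0)^2}-\frac{3}{16}\sigma_1^2-\lambda_{1,\n}\sigma_1,
\]
together with the boundedness of $\sigma_1$, gives (\ref{7:est2}).

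I do not expect a genuine obstacle: uniform right-definiteness removes precisely the difficulty (a possibly small $\sigma_3-\sigma_1$) that forced the Sturm-comparison detour in the proof of Theorem~\ref{5:t2}, so no case distinction is needed. The only point requiring a moment's care is that $a_0$ may be negative, so when estimating $\frac{3}{16}\sigma_1^2$ and $|\lambda_{1,\n}\sigma_1|$ one should use $|\sigma_1|\le\max(|a_0|,|a_3|)$ rather than any sign information on $\sigma_1$; this is cosmetic and does not affect the form of the bounds.
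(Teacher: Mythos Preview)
Your proposal is correct and follows essentially the same approach as the paper: both use the comparison fact from the proof of Theorem~\ref{5:t2} to locate points where $Q$ takes the prescribed values, subtract the two relations, and exploit the uniform lower bound $\sigma_3-\sigma_1\ge a_2-a_1$ to read off (\ref{7:est1}), then deduce (\ref{7:est2}) from the first relation. You simply make explicit the formula for $\lambda_{1,\n}$ that the paper leaves implicit in the phrase ``which implies (\ref{7:est1}).''
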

\begin{proof}
We abbreviate $\lambda_j=\lambda_{j,\n}$.
Arguing as in the proof of Theorem \ref{5:t2},
there are $t_1\in [b_0,b_1]$ and $t_3\in[b_2,b_3]$ such that
\begin{eqnarray}
\frac{3}{16}\{\phi(t_1)\}^2 +\lambda_1 \phi(t_1) +\lambda_2 &=&
-\frac{\pi^2 n_1^2}{(b_1-b_0)^2},\label{7:compare1} \\
\frac{3}{16}\{\phi(t_3)\}^2 +\lambda_1 \phi(t_3) +\lambda_2 &=&
-\frac{\pi^2 n_3^2}{(b_3-b_2)^2}.\label{7:compare2}
\end{eqnarray}
By subtracting (\ref{7:compare1}) from (\ref{7:compare2}), we obtain
\[
\frac{3}{16}\left(\{\phi(t_3)\}^2-\{\phi(t_1)\}^2\right)
+\lambda_1(\phi(t_3)-\phi(t_1)) = \frac{\pi^2 n_1^2}{(b_1-b_0)^2}
-\frac{\pi^2 n_3^2}{(b_3-b_2)^2}
\]
which implies (\ref{7:est1}).
Now (\ref{7:est2}) follows from (\ref{7:est1}) and (\ref{7:compare1}).
\end{proof}

Let $u_{1,\n}$ and $u_{3,\n}$ denote eigenfunctions corresponding to the
eigenvalue $(\lambda_{1,\n},\lambda_{2,\n})$.
The system of products $u_{1,\n}(t_1)u_{3,\n}(t_3)$, $\n\in\N_0^2$, is
orthogonal in the Hilbert space $H_2$ consisting of measurable functions
$f:(b_0,b_1)\times (b_2,b_3)\to\C$ satisfying
\[
\int_{b_2}^{b_3}\int_{b_0}^{b_1} (\phi(t_3)-\phi(t_1))\left|f(t_1,t_3)
\right|^2\,dt_1\,dt_3<\infty\]
with inner product
\[
\int_{b_2}^{b_3}\int_{b_0}^{b_1} (\phi(t_3)-\phi(t_1))
f(t_1,t_3)\overline{g(t_1,t_3)}\,dt_1\,dt_3.
\]
We normalize the eigenfunctions so that
\begin{equation}\label{7:norm1}
 \int_{b_2}^{b_3}\int_{b_0}^{b_1} (\phi(t_3)-\phi(t_1))
 \left\{u_{1,\n}(t_1)\right\}^2\left\{u_{3,\n}(t_3)\right\}^2\,dt_1\,dt_3 =1 .
\end{equation}

We have the following completeness theorem.

\begin{thm}\label{7:t3}
The double sequence of functions
\[ u_{1,\n}(t_1)u_{3,\n}(t_3),\quad \n\in \N_0^2 ,\]
forms an orthonormal basis in the Hilbert space $H_2$.
\end{thm}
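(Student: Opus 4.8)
The plan is to deduce completeness of the product system from the general multiparameter spectral theory, exactly as Theorem \ref{5:t3} was obtained from \cite[Theorem 6.8.3]{Volkmerbook}, but now using the additional advantage that the second two-parameter problem is uniformly right-definite. First I would recast the problem in the abstract framework: the two one-parameter equations (\ref{7:SL3}), (\ref{7:SL4}) with the Neumann conditions (\ref{7:bc}) define two self-adjoint Sturm-Liouville operators $T_1$ on $L^2(b_0,b_1)$ and $T_3$ on $L^2(b_2,b_3)$, each depending affinely on the spectral parameters $(\lambda_1,\lambda_2)$ through the coefficient functions $1$ and $\phi(t_j)$. The required determinant condition is the uniform right-definiteness already verified in the excerpt, namely $\phi(t_3)-\phi(t_1)\ge a_2-a_1>0$ on the closed rectangle $[b_0,b_1]\times[b_2,b_3]$; this is the hypothesis under which the abstract completeness theorem applies directly.

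The key steps, in order, are: (i) observe that $\phi$ is continuous and strictly increasing on $[b_0,b_3]$, so the coefficient functions are bounded and continuous and the two Sturm-Liouville problems are regular (the weight $\omega$ has only been used to pass to the $t$-variable, where the equations are in the simple form $u''+(\cdots)u=0$); (ii) invoke the Klein oscillation theorem (Theorem \ref{7:t1}) to guarantee that for every $\n\in\N_0^2$ there is exactly one eigenvalue with an eigenfunction pair having the prescribed zero counts, so the product eigenfunctions $u_{1,\n}(t_1)u_{3,\n}(t_3)$ are well defined up to normalization; (iii) check orthogonality in $H_2$, which is the standard computation: for two eigenvalues $(\lambda_{1,\n},\lambda_{2,\n})\ne(\lambda_{1,\m},\lambda_{2,\m})$ one multiplies each one-parameter equation by the other eigenfunction, integrates by parts using the boundary conditions (\ref{7:bc}), and takes the appropriate linear combination; the weight $\phi(t_3)-\phi(t_1)$ emerges as the $2\times2$ determinant of coefficient functions, and the off-diagonal terms cancel, giving $(\,(\lambda_{1,\n}-\lambda_{1,\m})\cdot\text{something}+(\lambda_{2,\n}-\lambda_{2,\m})\cdot\text{something})\langle u_{1,\n}u_{3,\n},u_{1,\m}u_{3,\m}\rangle_{H_2}=0$ and hence orthogonality; (iv) finally, apply the abstract completeness theorem for uniformly right-definite multiparameter eigenvalue problems — e.g.\ \cite[Theorem 6.8.3]{Volkmerbook} or the corresponding results in \cite{AtkMing11} — which asserts that the product eigenfunctions form an orthogonal basis of the tensor-product Hilbert space, here $H_2$.

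I do not expect a genuine obstacle in this theorem, precisely because the second problem is uniformly right-definite: the difficulties that lengthened the proofs in Section \ref{SL1} (the weight $\phi(t_3)-\phi(t_2)$ vanishing on the boundary of the rectangle, forcing separate arguments) simply do not arise here. The one point that deserves care is to confirm that the hypotheses of the cited abstract theorem are literally met — self-adjointness of $T_1,T_3$ on the stated $L^2$ spaces, boundedness and continuity of the coefficient functions $1$ and $\phi$, and strict positivity of the determinant on the full closed rectangle — all of which follow from the monotonicity and continuity of $\phi$ established via the elliptic integral (\ref{5:st}) together with the right-definiteness inequality displayed just before the theorem. Once these hypotheses are in place, the completeness statement is a direct citation, so the proof is short: set up the abstract operators, note uniform right-definiteness, and quote the theorem.
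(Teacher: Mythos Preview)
Your proposal is correct and matches the paper's approach: the paper gives no proof for Theorem~\ref{7:t3} at all, treating it as a direct consequence of the general multiparameter completeness theory (the same \cite[Theorem 6.8.3]{Volkmerbook} cited for Theorem~\ref{5:t3}), with the uniform right-definiteness $\phi(t_3)-\phi(t_1)\ge a_2-a_1>0$ already displayed just before the theorem. Your write-up is more explicit than the paper about verifying the hypotheses, but the argument is the same.
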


The normalization (\ref{7:norm1}) leads to a bound on the values of eigenfunctions.
Since we have uniform right-definiteness, the proof is simpler than the
proof of Theorem \ref{5:t4}.

\begin{thm}\label{7:t4}
There is a constant $B>0$ such that, for all $\n\in\N_0^2$ and
all $t_1\in[b_0,b_1]$, $t_3\in[b_2,b_3]$,
\[
|u_{1,\n}(t_1)u_{3,\n}(t_3)|\le B(n_1^2+n_3^2+1).
\]
\end{thm}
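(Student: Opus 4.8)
The plan is to mimic the proof of Theorem \ref{5:t4}, taking advantage of uniform right-definiteness to shorten it considerably. Write $u_j=u_{j,\n}$ and $\lambda_j=\lambda_{j,\n}$. Since the eigenfunctions $u_1$ and $u_3$ are each determined only up to a nonzero scalar, while (\ref{7:norm1}) fixes only the product $u_1u_3$ (up to sign), I may replace $u_1$ by $cu_1$ and $u_3$ by $c^{-1}u_3$ for a suitable $c>0$ and so assume, in addition to (\ref{7:norm1}), the normalization
\[
 \int_{b_0}^{b_1}\{u_1(t_1)\}^2\,dt_1=1 .
\]
Because $\phi(t_3)-\phi(t_1)\ge a_2-a_1>0$ on $[b_0,b_1]\times[b_2,b_3]$, condition (\ref{7:norm1}) then immediately gives the a priori bound
\[
 \int_{b_2}^{b_3}\{u_3(t_3)\}^2\,dt_3\le\frac1{a_2-a_1} .
\]
Having an $L^2$-bound on the second factor as well is exactly what was unavailable in Section \ref{SL1}, and it is the source of the simplification.

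Next I would multiply (\ref{7:SL3}) by $u_1$ and (\ref{7:SL4}) by $u_3$ and integrate by parts over $[b_0,b_1]$ and over $[b_2,b_3]$; the boundary terms vanish because of (\ref{7:bc}), leaving
\[
 \int_{b_0}^{b_1}u_1'^2=-\int_{b_0}^{b_1}Q(\phi)\,u_1^2,\qquad
 \int_{b_2}^{b_3}u_3'^2=-\int_{b_2}^{b_3}Q(\phi)\,u_3^2,
\]
where $Q$ is the van Vleck polynomial (\ref{5:Q}). By Theorem \ref{7:t2} there is a constant $A>0$ with $|\lambda_1|,|\lambda_2|\le A(n_1^2+n_3^2+1)$, and since $\phi$ takes values in the bounded interval $[a_0,a_3]$, this forces $|Q(\phi(t))|\le C_1(n_1^2+n_3^2+1)$ uniformly in $t$ and $\n$. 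Together with the two $L^2$-bounds above this yields a constant $C_2>0$ with
\[
 \int_{b_0}^{b_1}u_1'^2\,dt_1\le C_2(n_1^2+n_3^2+1),\qquad
 \int_{b_2}^{b_3}u_3'^2\,dt_3\le C_2(n_1^2+n_3^2+1) .
\]

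Finally I would invoke Lemma \ref{5:l1} on the interval $[b_0,b_1]$ with $c=b_0$, $d=b_1$ (and likewise on $[b_2,b_3]$), which gives
\[
 |u_1(t_1)|^2\le\frac{2}{b_1-b_0}\int_{b_0}^{b_1}u_1^2+2(b_1-b_0)\int_{b_0}^{b_1}u_1'^2
\]
and the analogous inequality for $u_3$. Inserting $\int u_1^2=1$, $\int u_3^2\le(a_2-a_1)^{-1}$ and the derivative bounds just obtained, both right-hand sides are bounded by $C_3(n_1^2+n_3^2+1)$ for a suitable $C_3>0$, hence $|u_1(t_1)u_3(t_3)|\le C_3(n_1^2+n_3^2+1)$, which is the assertion with $B=C_3$. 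I expect no genuine obstacle here; the only step needing care is the rescaling at the start, which is legitimate precisely because (\ref{7:norm1}) constrains only the product of the two eigenfunctions and not either of them individually.
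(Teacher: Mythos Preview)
Your proof is correct and is precisely the simplified version of the proof of Theorem~\ref{5:t4} that the paper alludes to (the paper omits the argument, only remarking that uniform right-definiteness makes it easier). The crucial point you identify---that $\phi(t_3)-\phi(t_1)\ge a_2-a_1>0$ turns (\ref{7:norm1}) into an honest $L^2$-bound on $u_3$, so there is no need for the cross-multiplication trick used around (\ref{5:ineq3})---is exactly the intended shortcut.
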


Let $u_{2,\n}$ be the solution of
\begin{equation}\label{7:ode}
u_2''+\left(\frac3{16} \{\phi(t_2)\}^2+\lambda_{1,\n} \phi(t_2)
+\lambda_{2,\n}\right) u_2 =0,\quad b_1\le t_2\le b_2
\end{equation}
determined by initial conditions
\[
u_2(b_1)=1,\quad u_2'(b_1)=0 .
\]

\begin{thm}\label{7:t5}
We have $u_{2,\n}(t_2)>0$ for all $t_2\in[b_1,b_2]$. If
$b_1< c_1<c_2<b_2$, then there are constants $C>0$ and $0<r<1$ such that,
for all $\n\in\N_0^2$ and $t_2\in[b_1,c_1]$,
\[
\left|\frac{u_{2,\n}(t_2)}{u_{2,\n}(c_2)}\right| \le C r^{n_1+n_3}.
\]
\end{thm}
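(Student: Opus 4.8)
The plan is to mirror the argument used for Theorem \ref{5:t5}, which reduced everything to Lemma \ref{5:l2}. The differential equation for $u_2 = u_{2,\n}$ on $[b_1,b_2]$ from (\ref{7:ode}) is
\[
u_2'' = -Q(\phi(t_2))\,u_2, \qquad Q(s) = \tfrac{3}{16}s^2 + \lambda_1 s + \lambda_2,
\]
so that, writing $q(t) := -Q(\phi(t))$, the equation is $u_2'' = q(t_2)u_2$ with $u_2(b_1) = 1$, $u_2'(b_1) = 0$. To apply Lemma \ref{5:l2} on the interval $[b_1, b_2]$ (with the roles of the endpoints interchanged so that $c_2$ plays the role of ``$b$'' and $b_1$ plays the role of ``$a$'', and $c_1$ splits off the region of rapid growth), I need two things: first that $q(t) \ge 0$ throughout $[b_1, b_2]$, i.e. $Q(\phi(t)) \le 0$ on $[a_1, a_2]$; and second that $q(t) \ge \lambda^2$ on $[c_1, c_2]$ (reversed orientation: near the ``$a$''-end of the reversed interval) with $\lambda$ of size proportional to $n_1 + n_3$.

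First I would locate $Q$ using the comparison points already established. By (\ref{7:compare1}), (\ref{7:compare2}) there are $s_1 \in [a_0, a_1]$ and $s_3 \in [a_2, a_3]$ with $Q(s_1) = -\pi^2 n_1^2/(b_1-b_0)^2 \le 0$ and $Q(s_3) = -\pi^2 n_3^2/(b_3-b_2)^2 \le 0$. Since $Q$ is an upward-opening parabola that is $\le 0$ at the two points $s_1 \le a_1$ and $s_3 \ge a_2$, it is $\le 0$ on the whole interval $[s_1, s_3] \supseteq [a_1, a_2]$; indeed $Q(s) \le L(s)$ there, where $L$ is the affine interpolant through $(s_1, Q(s_1))$ and $(s_3, Q(s_3))$, and $L \le 0$ on $[s_1, s_3]$. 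Hence $-Q(\phi(t)) \ge 0$ on $[b_1, b_2]$, giving $q \ge 0$. Moreover, on $[b_1, c_2]$ (where $\phi$ stays bounded away from one endpoint of $[a_1,a_2]$), the affine bound forces $-Q(\phi(t)) = q(t) \ge C_1(n_1 + n_3)^2$ for a positive constant $C_1$ independent of $\n$, exactly as in (\ref{5:ineq4}); the minimum of $|L|$ over a compact subinterval of $(s_1,s_3)$ is bounded below by a fixed multiple of $\max(n_1^2, n_3^2)$, using $0 < \phi(t_3) - \phi(t_1) \le a_3 - a_0$ to control the slope of $L$.

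With $q \ge 0$ on $[b_1,b_2]$ and $q \ge C_1(n_1+n_3)^2$ on $[b_1, c_2]$, Lemma \ref{5:l2}, applied to the interval $[b_1, c_2]$ with endpoints interchanged (so $q \ge \lambda^2$ holds near the right end after reversal, i.e. on $[c_1, c_2]$ in the original orientation, with $\lambda = \sqrt{C_1}\,(n_1+n_3)$), yields $u_2(t_2) > 0$ throughout and
\[
\frac{u_2(c_2)}{u_2(t_2)} \ge \tfrac12 e^{\lambda(c_2 - c_1)} \ge \tfrac12 e^{\sqrt{C_1}(c_2-c_1)(n_1+n_3)}
\quad\text{for } t_2 \in [b_1, c_1],
\]
and positivity on all of $[b_1, b_2]$ follows from $u_2(b_1)=1$, $u_2'(b_1)=0$, $q \ge 0$. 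Taking reciprocals gives the claimed bound with $r := e^{-\sqrt{C_1}(c_2-c_1)} \in (0,1)$ and $C := 2$. The one point requiring a little care — and the only genuine obstacle — is the uniform-in-$\n$ lower bound $q \ge C_1(n_1+n_3)^2$ on $[b_1,c_2]$: one must check that the affine interpolant $L$, though pinned at points $s_1, s_3$ whose locations depend on $\n$, still has $|L|$ bounded below on the fixed compact set $\phi([b_1,c_2]) \subset (a_1,a_2)$ by a fixed multiple of $n_1^2 + n_3^2$. This is handled by noting $s_1 \le a_1 < a_2 \le s_3$ always, so $\phi([b_1,c_2])$ stays at positive distance from both $s_1$ and $s_3$ uniformly, and the values $|Q(s_1)|, |Q(s_3)|$ grow like $n_1^2, n_3^2$ respectively while the interpolation length stays bounded; the details are exactly parallel to the corresponding step in the proof of Theorem \ref{5:t5} and I would simply refer to it.
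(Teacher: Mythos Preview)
Your approach is the same as the paper's: bound $Q$ above by the secant $L$ through $(s_1,Q(s_1))$ and $(s_3,Q(s_3))$, conclude $Q\le 0$ on $[a_1,a_2]$, obtain a quantitative upper bound for $Q$ on a compact subinterval, and invoke Lemma~\ref{5:l2}. Two small corrections are in order. First, no interchange of endpoints is needed here: the initial conditions for $u_2$ are already at the left endpoint $b_1$, so Lemma~\ref{5:l2} applies directly on $[b_1,c_2]$ with $a=b_1$, $c=c_1$, $b=c_2$. Second, your stronger claim that $q\ge C_1(n_1+n_3)^2$ holds on all of $[b_1,c_2]$ is not supported by the secant argument: since $\phi(b_1)=a_1$, the set $\phi([b_1,c_2])=[a_1,\phi(c_2)]$ includes the endpoint $a_1$ and is \emph{not} at uniform positive distance from $s_1\in(a_0,a_1)$ (for $n_1=0$ the comparison point $s_1$ may lie arbitrarily close to $a_1$, forcing $|L(a_1)|$ to be small relative to $n_3^2$). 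The paper asserts the bound (\ref{7:ineq4}) only on $[c_1,c_2]$, where $\phi(c_1)>a_1$ and $\phi(c_2)<a_2$ give uniform separation from both $s_1$ and $s_3$; this is exactly the hypothesis Lemma~\ref{5:l2} needs on $[c,b]=[c_1,c_2]$, and indeed your own application of the lemma only uses the bound there. With $[b_1,c_2]$ replaced by $[c_1,c_2]$ in the quantitative step, your argument is correct and coincides with the paper's.
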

\begin{proof}
We abbreviate $u_2=u_{2,\n}$ and $\lambda_j=\lambda_{j,\n}$.
We write (\ref{7:ode}) in the form
\[ u_2''+Q(\phi(t_2))u_2=0 ,\quad t_2\in[b_1,b_2] ,\]
where $Q$ is given by (\ref{5:Q}).
According to (\ref{7:compare1}), (\ref{7:compare2}), there are
$s_1\in(a_0,a_1)$ and $s_3\in(a_2,a_3)$ such that
\[
Q(s_1)=-\frac{\pi^2 n_1^2}{(b_1-b_0)^2},\quad Q(s_3)=
-\frac{\pi^2 n_3^2}{(b_3-b_2)^2}.
\]
If $s\in [s_1,s_3]$, then $Q(s)\le L(s)$, where $L(s)$ is the
linear function with $L(s_j)=Q(s_j)$, $j=1,3$. It follows that
\begin{equation}
\label{7:ineq4}
Q(\phi(t_2)) \le - C (n_1+n_3)^2 \quad \mbox{\ for\ } t_2\in [c_1,c_2].
\end{equation}
Again, we apply Lemma \ref{5:l2} to complete the proof.
\end{proof}

The results of this section remain valid for other boundary conditions.
This time we will need sixteen sets of boundary
conditions labeled by $\p=(p_0,p_1,p_2,p_3)\in\{0,1\}^4$. These
boundary conditions are
\begin{equation}\label{7:gbc}
\begin{array}{llll}
u_1'(b_0)=0 &\mbox{\ if\  }p_0=0,&\quad u_1(b_0)=0 &
\mbox{\ if\ } p_0=1,\\
u_1'(b_1)=0 &\mbox{\ if\ } p_1=0,&\quad  u_1(b_1)=0 &
\mbox{\ if\ } p_1=1,\\
u_3'(b_2)=0 &\mbox{\ if\ } p_2=0,&\quad  u_3(b_2)=0 &
\mbox{\ if\ } p_2=1,\\
u_3'(b_3)=0 &\mbox{\ if\ } p_3=0,&\quad u_3(b_3)=0 &
\mbox{\ if\  }p_3=1.
\end{array}
\end{equation}
The initial conditions for $u_2$ are
\begin{equation}\label{7:ini}
u_2(b_1)=1, u_2'(b_1)=0\, \mbox{\ if\ } p_1=0,\quad u_2(b_1)=0, u_2'(b_1)=1\,
\mbox{\ if\ } p_1=1.
\end{equation}
We denote the corresponding eigenvalues by
$(\lambda^{(2)}_{1,\nn},\lambda^{(2)}_{2,\nn})$.
The eigenfunctions will be denoted
by $\E{2}{i}(s_i)=u_{i,\n}(t_i)$, $i=1,2,3$.

Summarizing, for $i=1,2,3$, $\E{2}{i}$ is a solution 
of (\ref{4:Fuchs})
on $(a_{i-1},a_i)$ with $(\lambda_1,\lambda_2)
=(\lambda^{(2)}_{1,\nn},\lambda^{(2)}_{2,\nn})$.
The solution $\E{2}{1}(s_1)$ has exponent $\frac12 p_0$ at $a_0$,
exponent $\frac12 p_1$ at $a_1$, and it has $n_1$ zeros in $(a_0,a_1)$.
The solution $\E{2}{2}(s_2)$ has exponent $\frac12 p_1$ at $a_1$, and its
has no zeros in $(a_1,a_2)$.  The solution $\E{2}{3}(s_3)$ has
exponent $\frac12 p_2$ at $a_2$, exponent $\frac12 p_3$ at $a_3$, and it
has $n_3$ zeros in $(a_2,a_3)$.

\section{Second Dirichlet problem}\label{Dirichlet2}

Consider the coordinate surface (\ref{4:surface}) for fixed $s=d_2\in(a_1,a_2)$.
See Figure \ref{s23const}(a,b,c) for a graphical depiction of the shape of this surface.
\begin{figure}[h]
\begin{center}
\includegraphics[height=100mm]{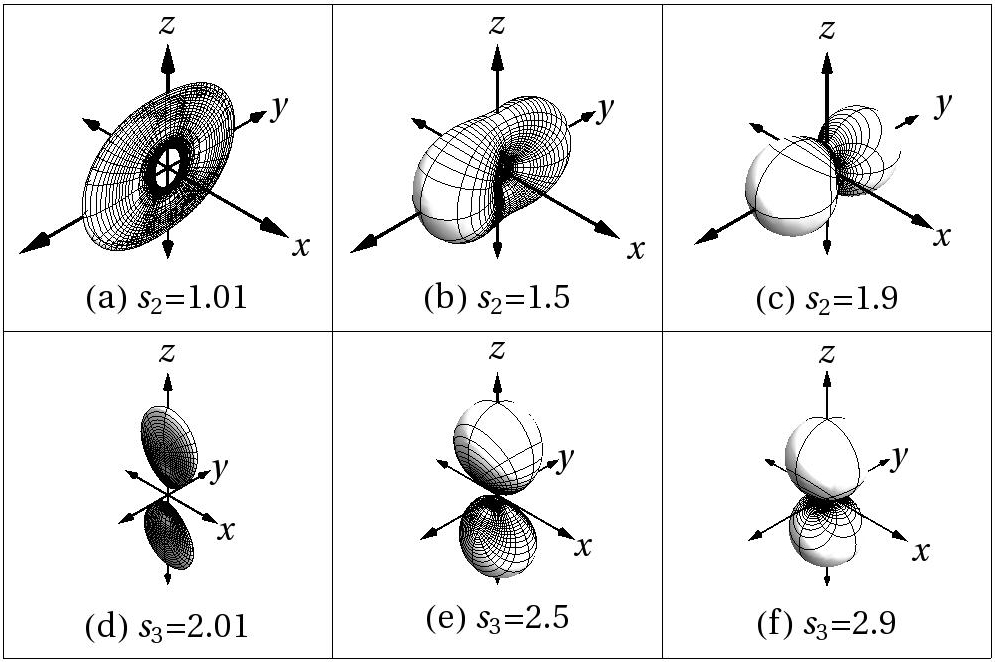}
\end{center}
\caption{Coordinate surfaces $s_{2,3}=const$ for $a_i=i$.}
\label{s23const}

\end{figure}
If $(x',y',z')\in\S_2$ then the ray $(x,y,z)=t(x',y',z')$ , $t>0$, is tangent
to the surface if and only if $(x',y',z')$ is on the surface.
If $(x',y',z')$ is in the elliptical cone
\[\frac{4x'^2}{d_2-a_1}+\frac{4y'^2}{d_2-a_2}+ \frac{4z'^2}{d_2-a_3}>0 ,\]
then the ray does not intersect the surface.
Otherwise we have two intersections $t=t_1,t_2$ and $t_1t_2=1$.
It follows from these considerations that $s_2=d_2$ describes a
connected surface of genus $1$.  The region interior to this surface is
\begin{equation}\label{8:D2}
D_2=\{(x,y,z)\in\R^3: s_2<d_2\},
\end{equation}
or, equivalently,
\[
D_2=\{(x,y,z):\frac{(x^2+y^2+z^2-1)^2}{d_2-a_0}
+\frac{4x^2}{d_2-a_1}+\frac{4y^2}{d_2-a_2}+ \frac{4z^2}{d_2-a_3}<0\}.
\]
In this section we solve the Dirichlet problem for harmonic functions
in $D_2$ by the method of separation of variables.

Let $\p=(p_0,p_1,p_2,p_3)\in\{0,1\}^4$ and $\n=(n_1,n_3)\in\N_0^2$.
Using the functions $\E{2}{i}$ introduced in Section \ref{SL2} we define
the {\it internal 5-cyclidic harmonic of the second kind}
\begin{equation}\label{8:G}
\G{2}(x,y,z)=(x^2+y^2+z^2+1)^{-1/2}\E{2}{1}(s_1)\E{2}{2}(s_2)\E{2}{3}(s_3)
\end{equation}
for $x,y,z\in \B$ with $x,y,z\ge 0$. We extend the function
\[(x^2+y^2+z^2+1)^{1/2}\G{2}(x,y,z)\]
to $\R^3$ as a function of parity $\p$.
We call a function $f$ of parity $\p=(p_0,p_1,p_2,p_3)$ if
\begin{equation}\label{8:parity}
f(\sigma_i(x,y,z))=(-1)^{p_i} f(x,y,z),\quad \mbox{\ for\ } 
i=0,1,2,3,
\end{equation}
using inversion (\ref{4:inversion}) and 
reflections (\ref{4:reflections}).

We omit the proof of the following lemma which is similar to the proof
of Lemma \ref{6:l1}.

\begin{lemma}\label{8:l1}
The function $\G{2}$ is harmonic at all points $(x,y,z)\in\R^3$
at which $s_2\ne a_2$; see (\ref{4:set4}).
\end{lemma}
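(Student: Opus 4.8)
The plan is to mirror the structure of the proof of Lemma~\ref{6:l1}. By Theorem~\ref{4:t1}, the function $\G{2}$ is harmonic on $\R^3$ away from the coordinate planes and the unit sphere; and at points of the unit sphere $s_1=a_0$ corresponds to exponent $0$ for the factor $\E{2}{1}$, so there is no obstruction there. It therefore suffices to show that $\G{2}$ is analytic at each point $(x,y,z)\in\R^3$ with $s_2\ne a_2$. The only places where analyticity could fail are the singular curves of the coordinate system, namely $A_1$ (where $s_1=s_2=a_1$) and $A_2$ (where $s_2=s_3=a_2$), together with the points on the coordinate planes where a single $s_j$ hits an endpoint $a_{j-1}$ or $a_j$ but the coordinate functions themselves stay analytic (because the relevant exponent is $0$ or, after pulling out the square-root factor, is handled as in the second half of the proof of Lemma~\ref{6:l1}).

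First I would dispose of the generic parity vector $\p=(0,0,0,0)$. Since we have excluded points with $s_2=a_2$, the set $A_2$ plays no role, so we only need analyticity across $A_1$ and across the plane $\{x=0\}$ away from $A_1$ and $A_2$. Away from $A_1$ all three coordinate functions $s_1,s_2,s_3$ are simple zeros of \eqref{4:cubic} hence analytic, and $\G{2}$ is a composition of analytic functions (noting $s_1\ne a_0$ on $\B$ and more generally wherever $\rho\ne1$, with the $\rho=1$ case covered by the vanishing exponent at $a_0$). At a point of $A_1$ one invokes Lemma~\ref{6:l2} exactly as before: there is an analytic function $w$ on $(a_0,a_2)$ restricting to $\E{2}{1}$ on $(a_0,a_1)$ and to $\E{2}{2}$ on $(a_1,a_2)$ (these agree to infinite order at $a_1$ because both have exponent $0$ there and satisfy the same ODE \eqref{4:Fuchs} with the same separation constants), while $(s_1-a_1)+(s_2-a_1)$ and $(s_1-a_1)(s_2-a_1)$ are analytic near the point in question; and $\E{2}{3}(s_3)$ is separately analytic there since $A_1\cap A_2=\emptyset$. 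Hence $\G{2}$ is analytic on a neighborhood of $A_1$.

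For the remaining parity vectors, I would handle the half-integer exponents one singularity at a time, using the sign-corrected square-root trick from Lemma~\ref{6:l1}: for instance if $p_0=1$ one writes $\E{2}{1}(s_1)=\chi_0(x,y,z)\,v_1(s_1)$ with $\chi_0:=\sign(\rho^2-1)\sqrt{|s_1-a_0|}$ analytic on $\R^3\setminus\S^2$ and $v_1$ analytic at $a_0$, absorbing the factor $(x^2+y^2+z^2-1)$ that makes $\G{2}$ globally well-defined of the right parity; if $p_1=1$ one instead pulls a factor $\sqrt{s-a_1}$ out of the common analytic continuation $w$ of $\E{2}{1},\E{2}{2}$ and notes that $\sqrt{(s_1-a_1)(s_2-a_1)}$ extends analytically past $A_1$ (it equals, up to sign and an analytic nonvanishing factor, the coordinate $x/(1-x_0)$), while on the plane $\{x=0\}$ away from $A_1$ exactly one of $s_1,s_2$ equals $a_1$ and the corresponding $\sqrt{s-a_1}$ is, via \eqref{4:system12a}--\eqref{4:system12b}, a smooth odd function of $x$. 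The cases $p_2=1$ and $p_3=1$ are analogous, using $A_2$ (harmless here by hypothesis) and the plane $\{z=0\}$ respectively. Composing these factorizations and applying Theorem~\ref{4:t1} on the harmonic (dense) open set yields harmonicity of $\G{2}$ at every point with $s_2\ne a_2$, since a function continuous on a domain and harmonic off a locally finite union of real-analytic curves is harmonic (alternatively, analyticity plus the mean-value property on the dense harmonic set).

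The main obstacle I anticipate is bookkeeping rather than conceptual: one must verify that at each endpoint $a_j$ the pair of adjacent eigenfunctions ($\E{2}{1},\E{2}{2}$ at $a_1$; $\E{2}{2},\E{2}{3}$ at $a_2$) really does glue into a single function analytic across $a_j$ — i.e., that they are proportional branches of one local solution of \eqref{4:Fuchs}, which holds because they have the \emph{same} exponent $\tfrac12 p_j$ and the same separation constants $(\lambda^{(2)}_{1,\nn},\lambda^{(2)}_{2,\nn})$ — and then that the symmetric-function hypothesis of Lemma~\ref{6:l2} is met by the elementary symmetric functions of $s_1-a_1$ and $s_2-a_1$ (respectively $s_2-a_2$ and $s_3-a_2$). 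Once the $A_2$-related analyticity is set aside by the hypothesis $s_2\ne a_2$, everything reduces to finitely many local checks identical in spirit to those in Lemma~\ref{6:l1}, which is why the authors state they omit the proof.
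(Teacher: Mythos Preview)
Your proposal is correct and follows precisely the route the paper indicates: it omits the proof of Lemma~\ref{8:l1} with the remark that it is ``similar to the proof of Lemma~\ref{6:l1},'' and you have carried out exactly that adaptation, including the additional parity $p_0$ governed by the inversion $\sigma_0$ and the correct observation that the hypothesis $s_2\ne a_2$ makes the $A_2$-gluing unnecessary. One small clarification: where you list the ``obstacle'' of gluing $\E{2}{2}$ and $\E{2}{3}$ at $a_2$, note that $\E{2}{2}$ has no prescribed exponent at $a_2$ (it is determined by initial data at $a_1$), so these two functions do \emph{not} glue in general---but, as you immediately say, this is irrelevant since $s_2=a_2$ is excluded; the only genuine gluing needed is at $a_1$, and there $\E{2}{1}$ and $\E{2}{2}$ share exponent $\tfrac12 p_1$ and the same separation constants, so Lemma~\ref{6:l2} applies.
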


Note that $s_2<d_2<a_2$ in $D_2$. Therefore, $\G{2}$ is harmonic in an
open set containing the closure of $D_2$.  Geometrically speaking,
the set $s_2=a_2$ consists of the part of the plane $y=0$ ``outside'' the
two closed curves in Figure \ref{4:fig2}.  The ring cyclide $D_2$ passes
through the $y=0$ plane inside those two closed curves.

Substituting $t_j=\Omega(s_j)$, $j=1,3$, the Hilbert space $H_2$ from
Section~\ref{SL2} transforms to the Hilbert space $\tilde{H}_2$
consisting of measurable functions
$g:(a_0,a_1)\times (a_2,a_3)\to\C$ for which
\begin{equation}\label{8:Hilbert}
\|g\|^2:= \int_{a_2}^{a_3}\int_{a_0}^{a_1}
\frac{s_3-s_1}{\omega(s_1)\omega(s_3)} |g(s_1,s_3)|^2\,ds_1\,ds_3 <\infty.
\end{equation}
By Theorem \ref{7:t3}, for $g\in \tilde{H}_2$ and fixed $\p$, we have
the Fourier expansion
\begin{equation}\label{8:Fourier}
g(s_1,s_3)\sim \sum_\n c_{\nn} \E{2}{1}(s_1)\E{2}{3}(s_3),
\end{equation}
where the Fourier coefficients are given by
\[
c_{\nn}= \int_{a_2}^{a_3}\int_{a_0}^{a_1} \frac{s_3-s_1}
{\omega(s_1)\omega(s_3)} g(s_1,s_3) \E{2}{1}(s_1)\E{2}{3}(s_3)\,ds_1\,ds_3.
\]

\begin{thm}\label{8:t1}
Consider the region $\D{2}$ defined by (\ref{8:D2}) for some
fixed $d_2\in(a_1,a_2)$.  Let $e$ be a function defined on its
boundary $\partial \D{2}$, and set
\begin{equation}\label{8:f}
 f(x,y,z):=(x^2+y^2+z^2+1)^{1/2}e(x,y,z) .
\end{equation}
Suppose that $f$ has parity $\p\in\{0,1\}^4$, and its representation
$g(s_1,s_3)$ in 5-cyclide coordinates is in $\tilde H_2$. Expand $g$ in
the series (\ref{8:Fourier}).  Then the function $u(x,y,z)$ given by
\begin{equation}
\label{8:expansion}
u(x,y,z) =\sum_\n \frac{c_{\nn}}{\E{2}{2}(d_2)} \G{2}(x,y,z)
\end{equation}
is harmonic in $\D{2}$ and assumes the values $e$ on the boundary
of $\D{2}$ in the $L^2$-sense explained below.
\end{thm}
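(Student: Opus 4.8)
The plan is to transcribe, nearly verbatim, the proof of Theorem~\ref{6:t1}, with the factor $\E{2}{2}$ --- the one depending on the coordinate $s_2$ that is held fixed on $\partial\D{2}$ --- now playing the role that $\E{1}{1}$ played there, and with the second Sturm--Liouville apparatus of Section~\ref{SL2} replacing that of Section~\ref{SL1}. First I would prove that the series (\ref{8:expansion}) converges absolutely and uniformly on compact subsets of $\D{2}$; since by Lemma~\ref{8:l1} each $\G{2}$ is harmonic wherever $s_2\ne a_2$, and $s_2<d_2<a_2$ throughout $\D{2}$ (so that the locus $A_1\subset\D{2}$ on which $s_1=s_2=a_1$ is no obstruction), this already yields that $u$ is harmonic in $\D{2}$. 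Given a compact $K\subset\D{2}$, continuity of $s_2$ gives $\sup_K s_2<d_2$; pick $d$ with $\sup_K s_2<d<d_2$ (so in particular $d>a_1$). Apply Theorem~\ref{7:t5} with $c_2=\Omega(d_2)$ and $c_1=\Omega(d)$ --- admissible because $b_1<c_1<c_2<b_2$, the inequality $c_2<b_2$ coming from $d_2<a_2$ --- to obtain constants $C>0$ and $r\in(0,1)$ independent of $\n$ with $|\E{2}{2}(s_2)/\E{2}{2}(d_2)|\le Cr^{n_1+n_3}$ for $a_1\le s_2\le d$. Combining this with the bound $|\E{2}{1}(s_1)\E{2}{3}(s_3)|\le B(n_1^2+n_3^2+1)$ from Theorem~\ref{7:t4} and with $(x^2+y^2+z^2+1)^{-1/2}\le 1$ gives, for $(x,y,z)\in K$,
\[
\left|\frac{c_\nn}{\E{2}{2}(d_2)}\,\G{2}(x,y,z)\right|\le|c_\nn|\,BC\,(n_1^2+n_3^2+1)\,r^{n_1+n_3}.
\]
As $g\in\tilde H_2$, Parseval yields $\sum_\n|c_\nn|^2=\|g\|^2<\infty$, so $(c_\nn)$ is bounded, and $\sum_\n(n_1^2+n_3^2+1)r^{n_1+n_3}<\infty$; hence the series converges absolutely and uniformly on $K$.

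Next I would check that $u$ attains the boundary data $e$ in the $L^2$ sense, once more copying the proof of Theorem~\ref{6:t1}. For fixed $s_2\in(a_1,d_2)$, let $v_{s_2}(s_1,s_3):=(x^2+y^2+z^2+1)^{1/2}u(x,y,z)$ be the representation in 5-cyclide coordinates of $u$ restricted to the surface $\{s_2=\mbox{const}\}$. By the uniform convergence above, $v_{s_2}\in\tilde H_2$, and by Theorem~\ref{7:t3} (carried over to $\tilde H_2$ by the substitution $t_j=\Omega(s_j)$) its Fourier coefficients relative to the orthonormal basis $\E{2}{1}(s_1)\E{2}{3}(s_3)$ are $c_\nn\,\E{2}{2}(s_2)/\E{2}{2}(d_2)$, while those of $g=v_{d_2}$ are the $c_\nn$ themselves. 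Parseval then gives
\[
\|v_{s_2}-g\|^2=\sum_\n|c_\nn|^2\left(1-\frac{\E{2}{2}(s_2)}{\E{2}{2}(d_2)}\right)^2 .
\]
From equation (\ref{7:ode}) with initial conditions (\ref{7:ini}), together with the fact (established inside the proof of Theorem~\ref{7:t5}) that $\frac{3}{16}s^2+\lambda_{1,\n}s+\lambda_{2,\n}\le0$ on $[a_1,a_2]$, the function $\E{2}{2}$ is nonnegative and nondecreasing on $[a_1,a_2]$ and has no zeros in $(a_1,a_2)$; hence $0\le\E{2}{2}(s_2)/\E{2}{2}(d_2)\le1$ for $s_2\le d_2$, so each summand above is dominated by $|c_\nn|^2$ and tends to $0$ as $s_2\uparrow d_2$ by continuity of $\E{2}{2}$. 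Dominated convergence gives $\|v_{s_2}-g\|\to0$, and since $e$ and $u$ have the same parity $\p$ this is exactly the claimed $L^2$-attainment of the boundary values.

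The one step that requires genuine care --- the reason this is not a mere restatement of Theorem~\ref{6:t1} --- is getting the \emph{direction} of the exponential estimate right: in $\D{2}$ one approaches $\partial\D{2}$ along $s_2\uparrow d_2$, so one needs the internal factor $\E{2}{2}(s_2)$ to decay, uniformly in $\n$, relative to its boundary value $\E{2}{2}(d_2)$. Theorem~\ref{7:t5} provides precisely this, but only on intervals $[b_1,c_1]$ with $c_1<c_2<b_2$, which forces one to observe that $d_2<a_2$ (hence $\Omega(d_2)<b_2$) makes $c_2=\Omega(d_2)$ an admissible base point. With that in hand, the rest is a faithful copy of the argument for Theorem~\ref{6:t1}.
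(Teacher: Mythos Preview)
Your proposal is correct and follows exactly the approach the paper takes: it invokes Theorems~\ref{7:t4} and~\ref{7:t5} to obtain absolute and uniform convergence of the series on compact subsets of $\D{2}$, appeals to Lemma~\ref{8:l1} for harmonicity, and then uses Parseval in $\tilde H_2$ together with the monotonicity and positivity of $\E{2}{2}$ (so that each summand is dominated by $|c_{\nn}|^2$) to get the $L^2$ boundary attainment. Your explicit check that $c_2=\Omega(d_2)<b_2$ is admissible in Theorem~\ref{7:t5}, and your justification via $Q\le 0$ on $[a_1,a_2]$ that $\E{2}{2}$ is nondecreasing, are details the paper leaves implicit but which are exactly the intended mechanism.
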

\begin{proof}
The proof is similar to the proof of Theorem \ref{6:t1}. It uses
Theorems \ref{7:t4} and \ref{7:t5} to show that the series in
(\ref{8:expansion}) is absolutely and uniformly convergent on compact
subsets of $D_2$.  Consequently, by Lemma \ref{8:l1}, $u(x,y,z)$ is harmonic in $D_2$.
If we consider $u$ for fixed $s_2\in(a_1,d_2)$ and compute the
norm $\|u-e\|$ in the Hilbert space $\tilde H_2$, we obtain
\[
\|u-e\|^2\le \sum_\n|c_{\nn}|^2 \left(1-\frac{\E{2}{2}(s_2)}
{\E{2}{2}(d_2)}\right)^2 .\]
The right-hand side converges to $0$ as $s_2\to d_2$.
Hence $u$ assumes the boundary values $e$ in this $L^2$-sense.
\end{proof}

If $f$ is a function without parity, we write $f$ as a sum of sixteen functions
\[
f=\sum_{\p\in\{0,1\}^4} f_\p,
\]
where $f_\p$ is of parity $\p$. Then the solution of the corresponding
Dirichlet problem is given by
\begin{equation}\label{8:expansion2}
 u(x,y,z)=\sum_\nn \frac{c_{\nn}}{\E{2}{2}(d_2)} \G{2}(x,y,z),
\end{equation}
where
\begin{equation}\label{8:c2}
 c_\nn=\int_{a_2}^{a_3}\int_{a_0}^{a_1} \frac{s_3-s_1}
 {\omega(s_1)\omega(s_3)} g_\p(s_1,s_3) \E{2}{1}(s_1)\E{2}{3}(s_3)\,ds_1\,ds_3
\end{equation}
and $g_\p(s_1,s_3)$ is the representation of $f_\p$ in 5-cyclide
coordinates.  We may  also write $c_\nn$ as a surface integral
\begin{equation}\label{8:c3}
c_\nn=\frac{1}{4\omega(d_2) \E{2}{2}(d_2)} \int_{\partial \D{2}}
\frac{e}{h_2} \G{2} \, dS ,
\end{equation}
where
\[
h_2^2=\frac{1}{16}\left(\frac{(x^2+y^2+z^2-1)^2}{(d_2-a_0)^2}
+\frac{4x^2}{(d_2-a_1)^2}+\frac{4y^2}{(d_2-a_2)^2}
+\frac{4z^2}{(d_2-a_3)^2} \right) .\]

\section{Third two-parameter Sturm-Liouville problem}\label{SL3}
If we write (\ref{4:Fuchs}) on the intervals $(a_0,a_1)$ and $(a_1,a_2)$
in formally self-adjoint form, we obtain
\begin{eqnarray}
&&(\omega(s_1)w_1')'-\frac{1}{\omega(s_1)}\left(\frac3{16}s_1^2
+\lambda_1 s_1+\lambda_2\right) w_1 =0,\quad a_0<s_1<a_1,\label{9:SL1}\\
&&(\omega(s_2)w_2')'+\frac{1}{\omega(s_2)}\left(\frac3{16}s_2^2
+\lambda_1 s_2+\lambda_2\right) w_2 =0,\quad a_1<s_2<a_2.\label{9:SL2}
\end{eqnarray}
We simplify the equations by substituting $t_j=\Omega(s_j)$,
$u_j(t_j)=w_j(s_j)$, where $\Omega(s)$ is the 
elliptic integral (\ref{5:st}).
Then (\ref{9:SL1}), (\ref{9:SL2}) become
\begin{eqnarray}
 u_1''-\left(\frac3{16} \{\phi(t_1)\}^2+\lambda_1 \phi(t_1)
 +\lambda_2\right) u_1 &=&0,\quad b_0\le t_1\le b_1 ,\label{9:SL3}\\
 u_2''+\left(\frac3{16} \{\phi(t_2)\}^2+\lambda_1 \phi(t_2)
 +\lambda_2\right) u_2 &=&0,\quad b_1\le t_2\le b_2 .\label{9:SL4}
\end{eqnarray}
Of course, this system is very similar to the one considered in
Section \ref{SL1}. Therefore, we will be brief in this section.
For a given $\p=(p_0,p_1,p_2)\in\{0,1\}^3$
we consider the boundary conditions
\begin{equation}\label{9:gbc}
\begin{array}{llll}
u_1'(b_0)=0 & \mbox{\ if\ } p_0=0,&\quad  u_1(b_0)=0 &\mbox{\ if\ }
p_0=1,\\
u_1'(b_1)=u_2'(b_1)=0 &\mbox{\ if\ } p_1=0, &\quad  u_1(b_1)=u_2(b_1)=0
&\mbox{\ if\ } p_1=1,\\
u_2'(b_2)=0 &\mbox{\ if\ } p_2=0,&\quad u_2(b_2)=0 &\mbox{\ if\ } p_2=1.
\end{array}
\end{equation}
The initial conditions for $u_3$ are
\begin{equation}
\label{9:ini}
u_3(b_2)=1, u_3'(b_2)=0\, \mbox{\ if\ } p_2=0,\quad u_3(b_2)=0,
u_3'(b_2)=1\, \mbox{\ if\ } p_2=1.
\end{equation}
We denote the corresponding eigenvalues by
$(\lambda^{(3)}_{1,\nn},\lambda^{(3)}_{2,\nn})$, where $\n=(n_1,n_2)\in\N_0^2$.
The eigenfunctions will be denoted
by $\E{3}{i}(s_i)=u_{i,\n}(t_i)$, $i=1,2,3$.

Summarizing, for $i=1,2,3$, $\E{3}{i}$ is a solution
of (\ref{4:Fuchs}) on $(a_{i-1},a_i)$
with $(\lambda_1,\lambda_2)=(\lambda^{(3)}_{1,\nn},\lambda^{(3)}_{2,\nn})$.
The solution $\E{3}{1}(s_1)$ has exponent $\frac12 p_0$ at $a_0$,
exponent $\frac12 p_1$ at $a_1$, and it has $n_1$ zeros in $(a_0,a_1)$.
The solution $\E{3}{2}(s_2)$ has exponent $\frac12 p_1$ at $a_1$,
exponent $\frac12 p_2$ at $a_2$, and it has $n_2$ zeros in $(a_1,a_2)$.
The solution $\E{3}{3}(s_3)$ has exponent $\frac12 p_2$ at $a_2$, and it
has no zeros in $(a_2,a_3)$.

\section{Third Dirichlet problem}\label{Dirichlet3}

Consider the coordinate surface (\ref{4:surface}) for fixed $s=d_3\in(a_2,a_3)$.
See Figure \ref{s23const}(d,e,f) for a graphical depiction of the shape
of this surface.
If $(x',y',z')\in\S_2$ then the ray $(x,y,z)=t(x',y',z')$ , $t>0$, is tangent
to the surface if and only if $(x',y',z')$ is on the surface.
If $(x',y',z')$ is in the elliptical cone
\[\frac{4x'^2}{d_2-a_1}+\frac{4y'^2}{d_2-a_2}+ \frac{4z'^2}{d_2-a_3}<0 ,\]
then the ray intersects the surface twice at $t=t_1,t_2$ with $t_1t_2=1$.
Otherwise, there is no intersection.
Therefore, the coordinate surface $s_3=d_3$ consists of two disjoint
closed surfaces of genus $0$ separated by the plane $z=0$, and
they are mirror images of each other under the reflection $\sigma_3$.

We consider the region inside the surface $s_3=d_3$ with $z>0$
\begin{equation}\label{10:D3}
\D{3}= \{(x,y,z): z>0, s_3<d_3\},
\end{equation}
or, equivalently,
\[
\D{3}=\{(x,y,z): z>0, \frac{(x^2+y^2+z^2-1)^2}{d_3-a_0}
+\frac{4x^2}{d_3-a_1}+\frac{4y^2}{d_3-a_2}+ \frac{4z^2}{d_3-a_3}<0\}.
\]
Next, we solve the Dirichlet problem for harmonic functions in $\D{3}$ by the
method of separation of variables.

Let $\p=(p_0,p_1,p_2)\in\{0,1\}^3$ and $\n=(n_1,n_2)\in\N_0^2$. Using
the functions $\E{3}{i}$ introduced in Section \ref{SL3} we define
the {\it internal 5-cyclidic harmonic of the third kind}
\begin{equation}\label{10:G}
\G{3}(x,y,z)=(x^2+y^2+z^2+1)^{-1/2}\E{3}{1}(s_1)\E{3}{2}(s_2)\E{3}{3}(s_3)
\end{equation}
for $(x,y,z)\in \B$ with $x,y,z \ge0$.
We extend the function
\[(x^2+y^2+z^2+1)^{1/2}\G{3}(x,y,z)\]
to the half-space $\{(x,y,z): z>0\}$ as a function of parity $\p$.
We call a function $f$ of parity $\p=(p_0,p_1,p_2)$ if
\begin{equation}\label{10:parity}
f(\sigma_i(x,y,z))=(-1)^{p_i} f(x,y,z),\quad 
\mbox{\ for\ } i=0,1,2
\end{equation}
using the inversion $\sigma_0$ and the reflections $\sigma_1,\sigma_2$.
As before we have the following lemma.

\begin{lemma}\label{10:l1}
The function $\G{3}$ is harmonic on $\{(x,y,z): z>0\}$.
\end{lemma}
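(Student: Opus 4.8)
The plan is to mirror the proof of Lemma \ref{6:l1}: since Theorem \ref{4:t1} already tells us that $\G{3}$ is harmonic on the open half-space $\{z>0\}$ away from the coordinate planes $x=0$ and $y=0$, it suffices to prove that $\G{3}$ is \emph{analytic} on all of $\{z>0\}$; a function harmonic off a codimension-one analytic set and continuous (indeed analytic) across it is harmonic everywhere. Note that in $\{z>0\}$ we never meet the surface $z=0$ (where $s_3=a_3$), nor the unit sphere issue, so the only possible failures of analyticity are along the curve sets $A_1$ (where $s_1=s_2=a_1$) and $A_2$ (where $s_2=s_3=a_2$), defined in (\ref{4:A1}), (\ref{4:A2}), together with the branching of the individual factors at $s_i=a_{i-1},a_i$.

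First I would dispose of the parity-dependent square-root factors exactly as in Lemma \ref{6:l1}: for each index $i$ with $p_{i-1}=1$ or $p_i=1$ one writes the corresponding eigenfunction $\E{3}{i}(s_i)$ as a signed square root times an analytic function, e.g. introduce $\chi := \sqrt{s_1-a_0}\cdot\sign(\text{something})$ when $p_0=1$, and observe using (\ref{4:system12a})--(\ref{4:system12b}) that this signed root is analytic on the relevant region; this reduces everything to the parity vector $\p=(0,0,0)$, where (\ref{10:G}) holds literally on $\{z>0\}$ with $x,y,z$ of either sign. So assume $\p=(0,0,0)$.

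For $\p=(0,0,0)$: away from $A_1\cup A_2$ each $s_i$ is a simple root of the cubic (\ref{4:cubic}) hence analytic, and $\G{3}$ is a composition of analytic functions (the prefactor $(x^2+y^2+z^2+1)^{-1/2}$ is analytic everywhere). At a point $(x',y',z')\in\{z>0\}\cap A_1$ we have $s_1,s_2\to a_1$ while $s_3$ stays away from $a_1$, so $\E{3}{3}(s_3)$ is analytic there; for the product $\E{3}{1}(s_1)\E{3}{2}(s_2)$ one argues as in Lemma \ref{6:l1}: glue $\E{3}{1}$ and $\E{3}{2}$ into a single analytic function $w$ on $(a_0,a_2)$ (legitimate because both are solutions of the same Fuchsian equation (\ref{4:Fuchs}) with exponent $0$ at $a_1$, so they extend analytically past $a_1$), note that $(s_1-a_1)+(s_2-a_1)$ and $(s_1-a_1)(s_2-a_1)$ are analytic functions of $(x,y,z)$ near $(x',y',z')$ (they are elementary symmetric functions of two roots of the cubic, and the third root is simple and analytic, so these follow by dividing the cubic), and invoke Lemma \ref{6:l2} to conclude $\E{3}{1}(s_1)\E{3}{2}(s_2) = F(s_1,s_2)$ is analytic, where $F(s,t):=w(s)w(t)$ is symmetric and analytic. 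The same argument handles $\{z>0\}\cap A_2$, gluing $\E{3}{2}$ and $\E{3}{3}$ across $a_2$. Hence $\G{3}$ is analytic, therefore harmonic, on $\{z>0\}$.

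The one genuine wrinkle — the step I expect to need the most care — is verifying that $\E{3}{3}(s_3)$, whose interval of definition is $(a_2,a_3)$ and which is \emph{not} an oscillatory eigenfunction in the problem of Section \ref{SL3} but rather the initial-value solution $u_{3,\n}$ of (\ref{9:ini}), really does extend analytically to a neighborhood of $s_3=a_2$ with exponent $\tfrac12 p_2$ there, so that the gluing with $\E{3}{2}$ across $a_2$ is valid; this is just the Fuchsian-exponent structure at $a_2$ (exponents $0,\tfrac12$) together with the boundary/initial condition (\ref{9:gbc}), (\ref{9:ini}) forcing the exponent to be $\tfrac12 p_2$, but it should be stated explicitly. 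Everything else is a transcription of the already-written proof of Lemma \ref{6:l1}, so I would simply write ``the proof is entirely analogous to that of Lemma \ref{6:l1}, using Lemma \ref{6:l2} and the fact that $\E{3}{i},\E{3}{i+1}$ glue to an analytic solution of (\ref{4:Fuchs}) across $a_i$ for $i=1,2$,'' and spell out only the parity reductions and the $A_1,A_2$ arguments as above.
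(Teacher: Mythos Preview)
Your proposal is correct and follows exactly the approach the paper intends: the paper gives no explicit proof for Lemma \ref{10:l1}, writing only ``As before we have the following lemma,'' so the argument is understood to be a transcription of the proof of Lemma \ref{6:l1}, which is precisely what you have written out. One small imprecision: Theorem \ref{4:t1} gives harmonicity only on the set $T$ of (\ref{4:set}), so after the parity extension you should say $\G{3}$ is harmonic on $\{z>0\}$ away from the planes $x=0$, $y=0$ \emph{and} the unit sphere $\rho=1$ (where $s_1=a_0$); but since $s_1$ is analytic there and $\E{3}{1}$ has exponent $\tfrac12 p_0$ at $a_0$, your parity-reduction via the signed root $\chi\propto\sign(\rho^2-1)\sqrt{s_1-a_0}$ handles this point exactly as it handles the others, and your overall argument goes through.
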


We have the Hilbert space
$\tilde{H}_3$ consisting of measurable functions
$g:(a_0,a_1)\times (a_1,a_2)\to\C$ for which
\begin{equation}
\label{10:Hilbert}
\|g\|^2:= \int_{a_1}^{a_2}\int_{a_0}^{a_1}
\frac{s_2-s_1}{\omega(s_1)\omega(s_1)} |g(s_1,s_2)|^2\,ds_1\,ds_2 <\infty.
\end{equation}
For $g\in \tilde{H}_3$ and fixed $\p$, we have the Fourier expansion
\begin{equation}\label{10:Fourier}
g(s_1,s_2)\sim \sum_\n c_{\nn} \E{3}{1}(s_1)\E{3}{2}(s_2),
\end{equation}
where the Fourier coefficients are given by
\begin{equation}\label{10:c1}
 c_{\nn}= \int_{a_1}^{a_2}\int_{a_0}^{a_1} \frac{s_2-s_1}
 {\omega(s_1)\omega(s_2)} g(s_1,s_2) \E{3}{1}(s_1)\E{3}{2}(s_2)\,ds_1\,ds_2 .
\end{equation}

\begin{thm}\label{10:t1}
Consider the region $\D{3}$ defined by (\ref{10:D3}) for some
fixed $d_3\in(a_2,a_3)$.
Let $e$ be a function defined on its boundary $\partial \D{3}$, and set
\begin{equation}\label{10:ef}
f(x,y,z):=(x^2+y^2+z^2+1)^{1/2}e(x,y,z) .
\end{equation}
Suppose that $f$ has parity $\p\in\{0,1\}^3$, and its representation
$g(s_1,s_2)$ in 5-cyclide coordinates
is in $\tilde H_3$. Expand $g$ in the series (\ref{10:Fourier}).
Then the function $u(x,y,z)$ given by
\begin{equation}\label{10:expansion}
 u(x,y,z) =\sum_\n \frac{c_{\nn}}{\E{3}{3}(d_3)} \G{3}(x,y,z)
\end{equation}
is harmonic in $\D{3}$ and assumes the values $e$ on the boundary
of $\D{3}$ in an $L^2$-sense.
\end{thm}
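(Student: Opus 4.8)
The plan is to follow the proof of Theorem~\ref{6:t1} essentially verbatim, once the three estimates of Section~\ref{SL1} have been transcribed into the setting of the third two-parameter problem of Section~\ref{SL3}. Repeating the arguments of Theorems~\ref{5:t4} and~\ref{5:t5} --- the coupled equations now being~(\ref{9:SL3}),~(\ref{9:SL4}), and $\E{3}{3}$ playing the role that $\E{1}{1}$ had, except that the initial conditions~(\ref{9:ini}) are imposed at the \emph{left} endpoint $a_2$ --- I would record: (i) a constant $B>0$ with
\[
|\E{3}{1}(s_1)\,\E{3}{2}(s_2)|\le B(n_1^2+n_2^2+1)\qquad(\n\in\N_0^2,\ s_1\in[a_0,a_1],\ s_2\in[a_1,a_2]);
\]
and (ii) for each $d\in(a_2,d_3)$, constants $C>0$ and $r\in(0,1)$ with
\[
0<\frac{\E{3}{3}(s_3)}{\E{3}{3}(d_3)}\le C\,r^{\,n_1+n_2}\qquad(a_2\le s_3\le d,\ \n\in\N_0^2).
\]
For~(ii) one writes~(\ref{4:Fuchs}) on $(a_2,a_3)$ as $u_3''=Q(\phi(t_3))\,u_3$ with $Q$ as in~(\ref{5:Q}); the oscillation comparison of Section~\ref{SL3} (the analogue of~(\ref{5:compare1})--(\ref{5:compare2})) produces $s_1\in(a_0,a_1)$, $s_2\in(a_1,a_2)$ with $Q(s_1)\le0\le Q(s_2)$ and $Q(s_2)-Q(s_1)\ge c_0(n_1^2+n_2^2)$ for some $c_0>0$, so convexity of $Q$ forces $Q\ge0$ on $[a_2,a_3]$ and $Q(\phi(t_3))\ge C_1(n_1+n_2)^2$ on $[\Omega(c),b_3]$ for any fixed $c\in(d,d_3)$; Lemma~\ref{5:l2}, applied on $[b_2,\Omega(d_3)]$ (with its obvious variant when $p_2=1$), then gives~(ii).

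With (i) and (ii) available, the convergence argument is that of Theorem~\ref{6:t1}. Because $g\in\tilde H_3$ and, by the completeness theorem for the third problem (the analogue of Theorem~\ref{5:t3}), $\{\E{3}{1}(s_1)\E{3}{2}(s_2):\n\in\N_0^2\}$ is an orthonormal basis of $\tilde H_3$, Bessel's inequality gives $\sum_\n|c_{\nn}|^2<\infty$, so $(c_{\nn})$ is bounded. On $\{(x,y,z):z>0,\ s_3\le d\}$, using $(x^2+y^2+z^2+1)^{-1/2}\le1$ together with~(i) and~(ii), the general term of~(\ref{10:expansion}) is dominated by $|c_{\nn}|\,C\,r^{\,n_1+n_2}\,B(n_1^2+n_2^2+1)$, and $\sum_{\n\in\N_0^2}r^{\,n_1+n_2}(n_1^2+n_2^2+1)<\infty$. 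Since every compact subset of $\D{3}$ lies in such a set for some $d<d_3$, the series~(\ref{10:expansion}) converges absolutely and uniformly on compact subsets of $\D{3}$. By Lemma~\ref{10:l1} each $\G{3}$ is harmonic on the half-space $\{z>0\}\supset\D{3}$, so the locally uniform limit $u$ is harmonic in $\D{3}$.

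For the boundary behaviour, fix $s_3$ with $a_2<s_3<d_3$ and restrict to the corresponding coordinate surface. There $(x^2+y^2+z^2+1)^{1/2}u$ has the $\tilde H_3$-representation $\sum_\n c_{\nn}\bigl(\E{3}{3}(s_3)/\E{3}{3}(d_3)\bigr)\E{3}{1}(s_1)\E{3}{2}(s_2)$, so Parseval's identity yields
\[
\|u-e\|^2\ \le\ \sum_\n|c_{\nn}|^2\Bigl(1-\frac{\E{3}{3}(s_3)}{\E{3}{3}(d_3)}\Bigr)^2 .
\]
Since $\E{3}{3}$ is positive and nondecreasing on $(a_2,a_3)$, one has $0<\E{3}{3}(s_3)/\E{3}{3}(d_3)\le1$ uniformly in $\n$, so each summand is at most $|c_{\nn}|^2$ and tends to $0$ as $s_3\to d_3$; dominated convergence drives the right-hand side to $0$. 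As $u$ and $e$ carry the same parity $\p$, this is exactly the assertion that $u$ attains $e$ on $\partial\D{3}$ in the $L^2$-sense: the $\tilde H_3$-representations of $(x^2+y^2+z^2+1)^{1/2}u$ on the surfaces $s_3=\text{const}$ converge in $\tilde H_3$ to $g$ as $s_3\to d_3$.

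The routine parts --- the bound on the general term, the harmonicity of the limit, and the Parseval computation --- are copied from Sections~\ref{SL1}--\ref{Dirichlet1}. The step that requires genuine attention is~(ii): one must check that the van Vleck polynomial attached to the third-kind eigenvalue $(\lambda^{(3)}_{1,\nn},\lambda^{(3)}_{2,\nn})$ is nonnegative throughout $[a_2,a_3]$ and grows at least like $(n_1+n_2)^2$ on a subinterval bounded away from $a_2$ --- this is where the specific layout of the third problem (the two oscillation counts living on $(a_0,a_1)$ and $(a_1,a_2)$, with $s_3$ free) enters --- after which Lemma~\ref{5:l2} supplies the exponential decay and the rest is bookkeeping.
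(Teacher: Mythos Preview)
Your proposal is correct and follows exactly the route the paper intends: Theorem~\ref{10:t1} is stated without proof because, as the paper remarks at the start of Section~\ref{SL3}, the third two-parameter problem mirrors the first, so the proofs of Theorems~\ref{5:t4}, \ref{5:t5} and~\ref{6:t1} transfer with only notational changes. Your verification of the one nontrivial point --- that convexity of $Q$ together with $Q(s_1)\le 0\le Q(s_2)$ for $s_1\in(a_0,a_1)$, $s_2\in(a_1,a_2)$ forces $Q\ge 0$ on $[a_2,a_3]$ and gives the quadratic lower bound away from $a_2$ --- is precisely the analogue of the secant-line step in the proof of Theorem~\ref{5:t5}, and your application of Lemma~\ref{5:l2} on $[b_2,\Omega(d_3)]$ is correct.
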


If $f$ is a function without parity, we write $f$ as a sum of eight functions
\[ f=\sum_{\p\in\{0,1\}^3} f_\p, \]
where $f_\p$ is of parity $\p$.
Then the solution of the corresponding Dirichlet problem is given by
\begin{equation}\label{10:expansion2}
 u(x,y,z)=\sum_\nn \frac{c_{\nn}}{\E{3}{3}(d_3)} \G{3}(x,y,z),
\end{equation}
where
\begin{equation}\label{10:c2}
 c_\nn=\int_{a_1}^{a_2}\int_{a_0}^{a_1} \frac{s_2-s_1}{\omega(s_1)\omega(s_2)}
 g_\p(s_1,s_2) \E{3}{1}(s_1)\E{3}{2}(s_2)\,ds_1\,ds_2
\end{equation}
and $g_\p(s_1,s_2)$ is the representation of $f_\p$ in 5-cyclide coordinates.
Alternatively, we have
\begin{equation}\label{10:c3}
c_\nn=\frac{1}{2\omega(d_3) \E{3}{3}(d_3)} \int_{\partial \D{3}}
\frac{e}{h_3} \G{3} \, dS ,
\end{equation}
where
\[ h_3^2=\frac{1}{16}\left(\frac{(x^2+y^2+z^2-1)^2}{(d_3-a_0)^2}
+\frac{4x^2}{(d_3-a_1)^2}+\frac{4y^2}{(d_3-a_2)^2}
+\frac{4z^2}{(d_3-a_3)^2} \right) .
\]



\section*{Acknowledgements}
This work was conducted while H.~S.~Cohl was a National Research Council
Research Postdoctoral Associate in the Applied and Computational
Mathematics Division at the National Institute of Standards and 
Technology, Gaithersburg, Maryland, U.S.A.

\section*{References}

\bibliographystyle{amsplain}

\end{document}